\theoremstyle{plain}
\newtheorem{theorem}{Theorem}[section]
\newtheorem{proposition}[theorem]{Proposition}
\newtheorem{corollary}[theorem]{Corollary}
\newtheorem{lemma}[theorem]{Lemma}
\theoremstyle{definition}
\newtheorem{definition}[theorem]{Definition}
\newtheorem{example}[theorem]{Example}
\newcommand{\R}{\mathbb{R}}
\newcommand{\Z}{\mathbb{Z}}
\newcommand{\C}{\mathbb{C}}
\newcommand{\Q}{\mathbb{Q}}
\newcommand{\1}{\mathbf 1}
\newcommand{\ii}{\mathrm{i}}
\newcommand{\0}{\mathbf 0}
\newcommand{\cj}[1]{\overline{#1}}
\newcommand{\wt}[1]{\widetilde{#1}}
\newcommand{\bttm}[1]{\widecheck{#1}}
\newcommand{\tp}[1]{\widehat{#1}}
\newcommand{\md}[1]{\ (\operatorname{mod}   #1)}
\newcommand{\cart}{%
  \text{\fboxsep=0.5pt\fbox{\rule[1pt]{0pt}{1ex}\rule[1pt]{1ex}{0pt}}}%
}
\DeclareMathOperator{\spn}{span}
\DeclareMathOperator{\rk}{rank}
\DeclarePairedDelimiter{\alg}{\langle}{\rangle}
\newcommand{\Hidden}[1]{}
\title{Fundamentals of fractional revival in graphs}
\author[1]{Ada Chan\thanks{Corresponding author: ssachan@yorku.ca}}
\author[2]{Gabriel Coutinho}
\author[3]{Whitney Drazen}
\author[4]{Or Eisenberg}
\author[5]{Chris Godsil}
\author[3]{Gabor Lippner}
\author[6]{Mark Kempton}
\author[7]{Christino Tamon}
\author[1]{Hanmeng Zhan}
\affil[1]{\small{Department of Mathematics and Statistics, York University}} 
\affil[2]{Department of Computer Science, Universidade Federal de Minas Gerais}
\affil[3]{Department of Mathematics, Northeastern University}
\affil[4]{Department of Mathematics, Harvard University}
\affil[5]{Department of Combinatorics and Optimization, University of Waterloo}
\affil[6]{Department of Mathematics, Brigham Young University}
\affil[7]{Department of Computer Science, Clarkson University}
\date{\today}
\begin{document}
\maketitle

\begin{abstract}
We develop a general spectral framework to analyze quantum fractional revival in quantum spin networks.  In particular, we introduce generalizations of the notions of cospectral and strongly cospectral vertices to arbitrary subsets of vertices, and give various examples. This work resolves two open questions of Chan et.~al. [``Quantum Fractional Revival on graphs". \textit{Discrete Applied Math}, 269:86-98, 2019.]
\end{abstract}

\section{Introduction}

An important problem in quantum information theory is the transfer of a quantum state through a quantum spin network.  The use of a quantum walk on a graph to analyze such information transfer was initiated by Bose \cite{BoseQuantumComPaths}, and carried further by Christandl et.~al., among others \cite{ChristandlPSTQuantumSpinNet2,ChristandlPSTQuantumSpinNet}.  Comprehensive surveys have been given by Kay \cite{KayReviewPST,KayPerfectcommunquantumnetworks}.  Many tools from algebraic graph theory have found applications to this problem, and a survey of work in this area is given by Godsil \cite{GodsilStateTransfer12}.

We study the single-excitation subspace of a spin network with uniform $\texttt{XX}$ couplings.  We denote the network by $X$, the set of vertices by $V(X)$ and the set of edges by $E(X)$. The evolution of such a system is given by its Hamiltonian
\[ H_{\texttt{XX}} = \frac{1}{2}\sum_{ (i,j) \in E(X)}a_{i,j}( \texttt{X}_i \texttt{X}_j + \texttt{Y}_i \texttt{Y}_j) + \sum_{i \in V(G)} a_{i,i} \cdot \texttt{Z}_i ,\] where $\texttt{X}_i, \texttt{Y}_i, \texttt{Z}_i$ are the standard Pauli matrices. The first sum corresponds to the  $\texttt{XX}$ couplings with coupling strength $a_{i,j}$ and $a_{i,i}$ represents the strength of a magnetic field at node $i$. The restriction of this system to the single-excitation subspace leads to the unitary transition matrix
\[
U(t):= e^{-itA}
\]
for $t\geq 0$, where $A=[a_{i,j}]$ is the (weighted) adjacency matrix of $X$.  The matrix $U(t)$ represents a continuous-time quantum walk on the graph $X$. 

The principal question that has been studied in this area is that of \emph{perfect state transfer}, namely when a quantum state can be transferred from node $a$ to $b$ with perfect fidelity.  This means
\[
|U(t)_{a,b}|=1.
\]

A generalization of perfect state transfer is a phenomenon called \emph{fractional revival} in a graph.  
Let $e_a$ denote the characteristic vector for vertex $a$.
For $K\subset V(X)$, we say $K$-fractional revival occurs in $X$ if there is some $t>0$ such that $U(t)e_a$ is supported only on $K$, for any $a\in K$.   Fractional revival in quantum spin networks has been studied in \cite{chen2007fractional,GenestVinetZhedanov1,GenestVinetZhedanov,christandl2017analytic,ChanCoutinhoTamonVinetZhan,bernard2018graph}.  Most of the literature on this topic concerns fractional revival between pairs of vertices ($|K|=2$).  

In this paper, we study $K$-fractional revival for subset $K$ of vertices of arbitrary size.  We develop a theoretical spectral framework for studying fractional revival in graphs.  We take many of the theoretical tools that have been developed in the study of perfect state transfer and generalize them appropriately to apply to fractional revival.  Specifically, a critical necessary condition for perfect state transfer between two vertices $a$ and $b$ is that they be \emph{cospectral}, that is, that $X\backslash a$ and $X\backslash b$ have the same (adjacency) spectrum.  In fact, a stronger condition, called \emph{strong cospectrality} is necessary for perfect state transfer \cite{GodsilStateTransfer12}.  As a generalization, we develop the notions of decomposability and strongly fractional cospectrality, and show that they are necessary conditions for fractional revival.  In addition, we explore conditions on eigenvalues that are necessary for fractional revival, analogous to eigenvalue conditions needed for perfect state transfer. 

Chan et.~al.~\cite{ChanCoutinhoTamonVinetZhan} studied fractional revival between pairs of cospectral vertices.  They posed as a significant open question what conditions are necessary for fractional revival to occur between non-cospectral pairs.  The spectral framework provided in this paper resolves this question.  In addition, they posed the question of whether or not it is possible for three or more vertices to exhibit pairwise fractional revival or not.  In other words, does fractional revival exhibit the ``monogamy'' property that is known (see \cite{KayPerfectcommunquantumnetworks}) to hold for perfect state transfer?  We resolve this question by constructing graphs in which  fractional revival occurs between every pair of vertices.  Thus fractional revival is not ``monogamous'' like perfect state transfer.  This demonstrates a fundamental difference between fractional revival and perfect state transfer.

\section{Preliminaries}
\label{Preliminaries}

In this paper, $X$ is a connected graph on $n$ vertices and $A$ is its adjacency matrix. We allow $X$ to have loops and  real weights on its edges. The continuous-time quantum walk of $X$ is given by the transition operator
\begin{equation*}
U(t) = e^{-\ii t A}.
\end{equation*}
Given the spectral decomposition
\begin{equation}
\label{Eqn:ASpec}
A = \sum_{r=0}^d \theta_r E_r,
\end{equation}
we can write
\begin{equation}
\label{Eqn:USpec}
U(t) =  \sum_{r=0}^d e^{-\ii t \theta_r} E_r.
\end{equation}
We call $E_r$, the orthogonal projection onto the $\theta_r$-eigenspace of $A$, a \textsl{principal idempotent} of $A$.

\begin{definition}
\label{Def:FR}
A graph $X$ has \textsl{fractional revival} from vertex $a$ to $b$ at time $\tau$ if
\begin{equation*}
U(\tau) e_a = \alpha e_a + \beta e_b,
\end{equation*}
for some $\alpha, \beta \in \C$ satisfying $|\alpha|^2+|\beta|^2=1$. If $\beta = 0$, we further require that $U(\tau) e_b = \gamma e_b$ for some $\gamma \in \C$ with $|\gamma| = 1$.
\end{definition}
If $\alpha=0$ then perfect state transfer occurs from $a$ to $b$ at time $\tau$. If $\beta=0$ then $X$ is periodic at $a$ and $b$ at time $\tau$. We say \textsl{proper} fractional revival occurs if $\alpha\beta \neq 0$.

Without loss of generality, we assume $a$ and $b$ index the first two columns and rows of $A$ and $U(t)$.
If there is fractional revival from $a$ to $b$ at time $\tau$, the first column of  $U(\tau)$ is $\begin{bmatrix} \alpha, \beta, 0,\ldots,0\end{bmatrix}^T$.  Since $U(\tau)$ is symmetric and unitary, 
\begin{equation*}
U(\tau) = 
\begin{bmatrix}
H & \0\\ \0 & H'
\end{bmatrix},
\end{equation*}
where 
\begin{equation*}
H=\begin{bmatrix}
\alpha & \beta \\ \beta & -\frac{\cj{\alpha}}{\cj{\beta}}\beta
\end{bmatrix} 
\end{equation*} if $\beta\neq0$ or \begin{equation*}
H=\begin{bmatrix}
\alpha & 0\\ 0 & \gamma
\end{bmatrix} 
\end{equation*} if $\beta=0$.

\begin{example}
\label{Ex:Cocktail}
The cocktail party graph \cite{ChanCoutinhoTamonVinetZhan2} has fractional revival between antipodal pairs of vertices at time $\tau = \frac{j\pi}{n}$, for $j=1,\ldots,n$, 
with
\begin{equation*}
U(\frac{j\pi}{n})=
I_n \otimes \begin{bmatrix} \cos(\frac{j(n-1)\pi}{n}) & \ii \sin(\frac{j(n-1)\pi}{n})\\  \ii \sin(\frac{j(n-1)\pi}{n}) &\cos(\frac{j(n-1)\pi}{n}) \end{bmatrix}.
\end{equation*}
\end{example}

Observe that when fractional revival occurs from $a$ to $b$,
the transition matrix has the block diagonal form 
\begin{equation*}
U(\tau) = 
\begin{bmatrix}
H & \0\\ \0 & H'
\end{bmatrix},
\end{equation*}
with $H$ being a unitary matrix indexed by $a$ and $b$.  
In this paper, we study a generalization, called $K$-fractional revival, for some subset $K$ of vertices of arbitrary size, where the transition matrix at fractional revival time is block diagonal with one block being indexed by the vertices in $K$.

The following example shows behavior pointing to possible generalization beyond the notion of $K$-fractional revival.
\begin{example}
\label{Ex:Spider}
Let $X_m$ be the graph obtained from subdividing every edge in $K_{1,m}$, for $m\geq 2$.  
\begin{center}
\begin{tikzpicture}
\fill (0,0) circle (1.5pt);
\draw (0,0) node[anchor=north]{\small $0$};
\fill (1,1) circle (1.5pt);
\draw (1,1) node[anchor=north]{\small $1$};
\fill (1,0.5) circle (1.5pt);
%\draw (1,0.5) node[anchor=north]{\small $2$};
\draw (1,0.25) node[anchor=north]{{$\vdots$}};
\fill (1,-0.5) circle (1.5pt);
%\draw (1,-0.5) node[anchor=north]{\tiny $m-1$};
\fill (1,-1) circle (1.5pt);
\draw (1,-1) node[anchor=north]{\small $m$};
\draw (0,0)--(1,1);
\draw (0,0)--(1,0.5);
\draw (0,0)--(1,-0.5);
\draw (0,0)--(1,-1);

\fill (2,1) circle (1.5pt);
\draw (2,1) node[anchor=north]{\small $m+1$};
\fill (2,0.5) circle (1.5pt);
%\draw (2,0.5) node[anchor=north]{\tiny $m+2$};
\draw (2,0.25) node[anchor=north]{{$\vdots$}};
\fill (2,-0.5) circle (1.5pt);
%\draw (2,-0.5) node[anchor=north]{\tiny $2m-1$};
\fill (2,-1) circle (1.5pt);
\draw (2,-1) node[anchor=north]{\small $2m$};
\draw (2,1)--(1,1);
\draw (2,0.5)--(1,0.5);
\draw (2,-0.5)--(1,-0.5);
\draw (2,-1)--(1,-1);

\end{tikzpicture}
\end{center}
Let $H_m = I_m - (1/m)J_m$. The transition matrix, $U(t)$, of $X_m$ is
\begin{equation*}
\begin{bmatrix} 
\frac{1+m\cos (t\sqrt{m+1})}{m+1} && \frac{-\ii\sin(t\sqrt{m+1})}{\sqrt{m+1}} \1^T_m&& \frac{-1 + \cos(t\sqrt{m+1})}{m+1} \1^T_m\\
\\
\frac{-\ii\sin(t\sqrt{m+1})}{\sqrt{m+1}}  \1_m &&\cos t  \Big(H_m\Big) +\frac{\cos(t\sqrt{m+1})}{m}J_m && -\ii\sin t \Big(H_m\Big) - \frac{\ii\sin(t\sqrt{m+1})}{m\sqrt{m+1}}J_m\\ 
\\
\frac{-1 + \cos(t\sqrt{m+1})}{m+1}  \1_m &&  -\ii\sin t \Big(H_m \Big) - \frac{\ii\sin(t\sqrt{m+1})}{m\sqrt{m+1}}J_m && \cos t \big(H_m\big) + \frac{m+\cos(t\sqrt{m+1})}{m(m+1)}J_m
 \end{bmatrix},
\end{equation*}
where $\1_m$ is the vector of all ones of length $m$ and $J_m$ is the $m\times m$ matrix of all ones.

At time $\tau=\frac{\pi}{\sqrt{m+1}}$, the support of 
$U(\tau)e_0$ consists of the vertex $0$ and all the leaves but the transition matrix
\begin{equation*}
U(\tau)=\begin{bmatrix} 
\frac{1-m}{1+m} & \0^T_m& \frac{-2 }{m+1} \1^T_m\\
\\
\0_m &\cos \tau  I_m -\frac{(1+\cos \tau)}{m}  J_m & -\ii\sin \tau \Big(I_m -\frac{1}{m} J_m\Big)\\ 
\\
 \frac{-2 }{m+1}  \1_m &  -\ii\sin \tau \Big(I_m -\frac{1}{m} J_m\Big) & \cos \tau I_m +\frac{m-1-(m+1)\cos \tau}{m(m+1)} J_m
 \end{bmatrix}
 \end{equation*}
is not block diagonal.  
%In particular, note that there is not fractional revival at time $\tau$ from any of the leaves.
\end{example}

%%%%%%%%%%%%%%%%%%%%%%%%%%%%%%%%%%%%%%%%%%%%%%%%%%%%%%%%%%%%%%%%%%%%%%%%%%%%%
%%%%%%%%%%%%%%%%%%%%%%%%%%%%%%%%%%%%%%%%%%%%%%%%%%%%%%%%%%%%%%%%%%%%%%%%%%%%%

\section{$K$-Fractional revival in graphs}
\label{Section:K-FR}

In this section, we generalize fractional revival between two vertices to fractional revival among all vertices of some arbitrary subset $K$ of vertices.  That is,
there exists a time $\tau$ such that $U(\tau)$ is a block diagonal matrix with one of the blocks being indexed by the elements in $K$.
Without loss of generality, we assume the top $|K|$ rows and the leftmost $|K|$ columns of $U(t)$ are indexed by the elements of $K$.

\begin{definition}
Let $K \subset V(X)$.  We say $X$ has $K$-fractional revival at time $\tau$ if, up to permuting rows and columns, the transition matrix $U(\tau)$ has the following block structure
\begin{equation*}
\begin{bmatrix} H & \0\\ \0 & H'\end{bmatrix},
\end{equation*}
for some $|K|\times |K|$ unitary matrix $H$.
\end{definition}
The definition includes the case where $H$ is a diagonal matrix, that is, every vertex in $K$ is periodic at time $\tau$.
We say $X$ has \textsl{proper} $K$-fractional revival if $H$ is not a diagonal matrix.
Proper $\{a,b\}$-fractional revival occurs in $X$ if and only if  it admits proper fractional revival or perfect state transfer between $a$ and $b$.

In \cite{GodsilRealState}, Godsil presents continuous-time quantum walk from the viewpoint of density matrices.
Density matrices are positive semidefinite matrices with trace one.    
Given the density matrix $D$ as the initial state, the state of the walk at time $t$ is 
\begin{equation*}
U(t)D U(-t).
\end{equation*}
We say the state $D$ is periodic at time $\tau$ if 
$U(\tau)D U(-\tau)=D$.

\begin{proposition}
Let $K\subset V(X)$ and $D_K =\sum_{a\in K} e_a e_a^{T}$.
Then $K$-fractional revival occurs in $X$ at time $\tau >0$ if and only if the density matrix $\frac{1}{|K|}D_K$ is periodic at $\tau$.
\end{proposition}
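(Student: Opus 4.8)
The plan is to reduce the periodicity condition to a commutation relation and then read off the block structure. First I would note that the scalar $\tfrac{1}{|K|}$ is irrelevant: the density matrix $\tfrac{1}{|K|}D_K$ is periodic at $\tau$ if and only if $U(\tau)\,D_K\,U(-\tau)=D_K$. Since $U(\tau)$ is unitary with $U(-\tau)=U(\tau)^{-1}$, multiplying this identity on the right by $U(\tau)$ shows it is equivalent to the commutation relation $U(\tau)\,D_K=D_K\,U(\tau)$.

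Next I would exploit the fact that $D_K=\sum_{a\in K}e_ae_a^T$ is exactly the orthogonal projection onto the coordinate subspace $\spn\{e_a:a\in K\}$, i.e.\ the $\{0,1\}$-diagonal matrix with ones in the positions indexed by $K$. Writing $U(\tau)$ in block form with respect to the partition $V(X)=K\cup(V(X)\setminus K)$, a direct comparison of $D_K\,U(\tau)$ with $U(\tau)\,D_K$ shows that they agree if and only if \emph{both} off-diagonal blocks of $U(\tau)$ vanish; equivalently, $U(\tau)$ is block diagonal with the top-left block $H$ indexed by the vertices of $K$.

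Finally, I would observe that unitarity of $U(\tau)$ promotes this block-diagonal form to the statement of $K$-fractional revival: from $U(\tau)U(\tau)^{*}=I$ the $(K,K)$ block yields $HH^{*}=I$, so $H$ is a $|K|\times|K|$ unitary matrix (and likewise the complementary block $H'$ is unitary). Thus the commutation relation is equivalent to
\[
U(\tau)=\begin{bmatrix}H & \0\\ \0 & H'\end{bmatrix}
\]
with $H$ unitary of order $|K|$, which is precisely the definition of $K$-fractional revival at $\tau$. Reading the chain of equivalences in both directions then proves the proposition.

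There is no genuine obstacle here: the content is the standard observation that periodicity of a projection density matrix is equivalent to commuting with $U(\tau)$, combined with the elementary fact that a block-diagonal unitary has unitary diagonal blocks. The only points needing care are the bookkeeping of the ordering convention---so that $K$ indexes the top-left block with no extra permutation---and checking that commutation forces \emph{both} off-diagonal blocks to vanish rather than only one.
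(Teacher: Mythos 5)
Your proof is correct and follows the same route as the paper: reduce periodicity of $\tfrac{1}{|K|}D_K$ to the commutation relation $U(\tau)D_K = D_K U(\tau)$ and identify that with the block-diagonal form of $U(\tau)$. The paper simply treats the equivalence between commuting with the projection $D_K$ and being block diagonal (with unitary blocks) as immediate, whereas you spell out that verification explicitly.
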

\begin{proof}
We have $K$-fractional revival in $X$ at time $\tau >0$ if and only if
\begin{equation*}
U(\tau)   D_K = D_K U(\tau),
\end{equation*}
equivalently,
\begin{equation*}
U(\tau)  \Big(\frac{1}{|K|} D_K \Big) U(-\tau) = \frac{1}{|K|}  D_K.
\end{equation*}
\end{proof}

It follows from Equation~(\ref{Eqn:USpec}) that $\frac{1}{|K|} D_K$ is periodic at time $\tau>0$ if and only if, for each $r$ and $s$,
\begin{equation*}
e^{i\tau(\theta_r-\theta_s)} E_r D_K E_s = E_r D_K E_s.
\end{equation*}
The \textsl{eigenvalue support} of the set $K$ is the set
\begin{equation*}
\Phi_K = \{(\theta_r, \theta_s): E_r D_K E_s\neq 0\}.
\end{equation*}
For $D_K$ to be periodic at time $\tau$, we must have
\begin{equation*}
e^{i\tau (\theta_r-\theta_s)}= 1, \quad \text{for $(\theta_r, \theta_s) \in \Phi_K$,}
\end{equation*}
which means that for any $(\theta_r,\theta_s), (\theta_h,\theta_k) \in \Phi_K$ with $\theta_h\neq \theta_k$, the ratio 
\begin{equation*}
\frac{\theta_r-\theta_s}{\theta_h-\theta_k}
\end{equation*}
must be rational.

Theorem~5.5 and Corollary~5.6 of \cite{GodsilRealState} give the following necessary condition of $K$-fractional revival
when the weights of $X$ are integers,
\begin{theorem}
\label{Thm:K-FReigenvalues}
Suppose $X$ has integer weights.  
If $K$-fractional revival occurs in $X$ at time $\tau>0$ then there exists a square-free integer $\Delta$ such that
$\theta_r-\theta_s$ is an integer multiple of $\sqrt{\Delta}$, for all $(\theta_r, \theta_s)\in \Phi_K$.
Moreover, the first instance of $K$-fractional revival occurs at time  at most $2\pi$.
\end{theorem}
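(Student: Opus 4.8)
The plan is to exploit that, since $X$ has integer weights, $A$ is an integer matrix, so every eigenvalue $\theta_r$ is an algebraic integer and every difference $\theta_r-\theta_s$ lies in the ring of integers of the splitting field $L$ of the characteristic polynomial of $A$. The excerpt already reduces periodicity of $\frac{1}{|K|}D_K$ at $\tau$ to the conditions $e^{\ii\tau(\theta_r-\theta_s)}=1$ for all $(\theta_r,\theta_s)\in\Phi_K$, and shows that the differences arising from $\Phi_K$ are rational multiples of one another. If every such difference is zero the statement is trivial (take $\Delta=1$), so I would fix one nonzero difference $\delta=\theta_h-\theta_k$ with $(\theta_h,\theta_k)\in\Phi_K$; then every difference equals $\lambda\delta$ for some $\lambda\in\Q$, and the whole problem reduces to understanding the arithmetic of the single algebraic integer $\delta$.

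The heart of the argument is a Galois symmetry of $\Phi_K$. Let $G=\mathrm{Gal}(L/\Q)$. Using the interpolation formula $E_r=\prod_{s\neq r}(A-\theta_s I)/(\theta_r-\theta_s)$, each $\sigma\in G$ permutes the principal idempotents by $\sigma(E_r)=E_{\sigma(r)}$, where $\theta_{\sigma(r)}=\sigma(\theta_r)$; since $D_K$ is a $0/1$ matrix it is fixed by $\sigma$, so $\sigma(E_r D_K E_s)=E_{\sigma(r)} D_K E_{\sigma(s)}$, and because the entrywise action of a field automorphism is injective, $\Phi_K$ is $G$-invariant. In particular $\sigma(\delta)=\sigma(\theta_h)-\sigma(\theta_k)$ is again a nonzero difference coming from $\Phi_K$, hence $\sigma(\delta)=q_\sigma\,\delta$ for some $q_\sigma\in\Q^{\times}$. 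The relation $\sigma\rho(\delta)=q_\sigma q_\rho\,\delta$ shows $\sigma\mapsto q_\sigma$ is a homomorphism $G\to\Q^{\times}$ with finite image, which must therefore be contained in $\{\pm1\}$. Consequently $\sigma(\delta^2)=q_\sigma^2\delta^2=\delta^2$ for every $\sigma$, so $\delta^2\in\Q$; being an algebraic integer it lies in $\Z$, and writing $\delta^2=c^2\Delta$ with $\Delta$ square-free gives $\delta=c\sqrt{\Delta}$.

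To finish the first assertion I would upgrade this from $\delta$ to an arbitrary difference $\mu=\theta_r-\theta_s$ with $(\theta_r,\theta_s)\in\Phi_K$. The same Galois computation applied to $\mu$ shows $\mu^2\in\Z$, while the rational-ratio property gives $\mu=\lambda\delta=\lambda c\sqrt{\Delta}$ with $\lambda\in\Q$. Writing $\lambda c=u/v$ in lowest terms, $\mu^2=(u^2/v^2)\Delta\in\Z$ forces $v^2\mid\Delta$, and square-freeness of $\Delta$ forces $v=1$; hence $\mu=u\sqrt{\Delta}$ is an integer multiple of $\sqrt{\Delta}$, with the zero differences being $0\cdot\sqrt{\Delta}$.

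For the time bound I would write each difference as $\theta_r-\theta_s=m_{rs}\sqrt{\Delta}$ with $m_{rs}\in\Z$, so the periodicity conditions become $\tau m_{rs}\sqrt{\Delta}\in 2\pi\Z$ for all $(\theta_r,\theta_s)\in\Phi_K$. Setting $g=\gcd\{|m_{rs}|:m_{rs}\neq 0\}$ and using a B\'ezout combination of the $m_{rs}$, these simultaneous conditions are equivalent to the single condition $\tau g\sqrt{\Delta}\in 2\pi\Z$, whose smallest positive solution is $\tau=2\pi/(g\sqrt{\Delta})$; since $g\ge 1$ and $\Delta\ge 1$ this is at most $2\pi$ (in the degenerate case where $\Phi_K$ contains only pairs $(\theta_r,\theta_r)$, $D_K$ commutes with $A$ and is periodic at every time, so the bound is immediate). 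I expect the main obstacle to be the Galois step: one must verify that $\sigma$ genuinely preserves \emph{nonvanishing} of $E_r D_K E_s$ (this is exactly where the injectivity of the entrywise automorphism and the rationality of $D_K$ are both used), after which the reduction to a finite subgroup of $\Q^{\times}$ is the clean conceptual core.
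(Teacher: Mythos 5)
Your argument is correct. Note first that the paper itself does not prove this theorem: it is imported verbatim from Theorem~5.5 and Corollary~5.6 of Godsil's \emph{Real state transfer} preprint, so there is no in-paper proof to compare against. What you have written is a sound, self-contained reconstruction of the standard mechanism behind that cited result: the Galois group of the splitting field permutes the distinct eigenvalues (they are the roots of the minimal polynomial of the integer matrix $A$, so the Lagrange interpolation formula does give $\sigma(E_r)=E_{\sigma(r)}$), it fixes the $0/1$ matrix $D_K$ entrywise, and entrywise application of a field automorphism preserves nonvanishing, so $\Phi_K$ is Galois-invariant; combining this with the rational-ratio property already derived in Section~2 yields the homomorphism $G\to\Q^{\times}$, whose image must lie in $\{\pm 1\}$, whence $\delta^2\in\Z$ (positive, since the eigenvalues of the symmetric matrix $A$ are real) and $\delta=c\sqrt{\Delta}$. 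The descent from $\delta$ to an arbitrary difference $\mu=\lambda\delta$ via $v^2\mid\Delta$ is clean, and in fact you do not even need a second Galois pass there: $\mu^2=\lambda^2\delta^2$ is already rational and is an algebraic integer, hence an integer. The time bound is also handled correctly, since the paper establishes that periodicity of $D_K$ at $\tau$ is \emph{equivalent} to $e^{\ii\tau(\theta_r-\theta_s)}=1$ on $\Phi_K$, so the B\'ezout argument identifies the full set of revival times as $\frac{2\pi}{g\sqrt{\Delta}}\Z$ with $g\sqrt{\Delta}\ge 1$. In short: the proposal supplies a complete proof of a statement the paper only cites, and it does so by what is almost certainly the same Galois-theoretic route as the reference.
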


%%%%%%%%%%%%%%%%%%%%%%%%%%%%%%%%%%%%%%%%%%%%%%%%%%%%%%%%%%%%%%%%%%%%%%%%%%%%%
%%%%%%%%%%%%%%%%%%%%%%%%%%%%%%%%%%%%%%%%%%%%%%%%%%%%%%%%%%%%%%%%%%%%%%%%%%%%%
\section{Constructions}
\label{Section:Constructions}

Graph products, whose adjacency matrices  have  natural block structures, are promising places to look for examples of $K$-fractional revival.
In this section, we give examples of $K$-fractional revival using some common graph products.  
We use $A_X$ and $U_X(t)$ to denote the adjacency matrix and the transition operator of the quantum walk for the graph $X$, respectively.

\subsection{Cartesian product}
Given graphs $X$ and $Y$, we use $X \cart Y$ to denote their Cartesian product.   Then
\begin{equation*}
U_{X \cart Y}(t) = U_X(t) \otimes U_Y(t).
\end{equation*}
In general, if $X$ is periodic at a vertex $a$ at time $\tau$ and $U_Y(\tau)$ is not a diagonal matrix, then $X\cart Y$ has proper $K$-fractional revival at time $\tau$
where $K =\{a\} \times V(Y)$.
Please see Example~3.3 of \cite{ChanCoutinhoTamonVinetZhan} for an example of this construction.

\subsection{Direct product}
The direct product of $X$ and $Y$, denoted by $X \times Y$, has adjacency matrix $A_X \otimes A_Y$.  By Lemma~4.2 of \cite{CoutinhoGodsilPSTProducts},
given the spectral decomposition $A_X=\sum_{r=0}^d \theta_r E_r$,
\begin{equation*}
U_{X\times Y} (t)  = \sum_{r=0}^d E_r \otimes U_Y(\theta_r t).
\end{equation*}
If there exists a non-diagonal matrix $H$ such that $U_Y(\theta_r \tau)=H$, for every $\theta_r$ in the spectrum of $X$, then
\begin{equation*}
U_{X\times Y} (\tau)  = \sum_{r=0}^d E_r \otimes U_Y(\theta_r \tau) = I_{|X|} \otimes H.
\end{equation*}
In this case, $X\times Y$ has proper $\big(\{a\} \times V(Y)\big)$-fractional revival at time $\tau$, for any $a \in V(X)$.
\begin{example}
\label{Ex:Direct}
The line graph of the complete bipartite graph $K_{n,n}$ has spectrum $\{2(n-1),n-2,-2\}$, see \cite{BrouwerHaemers}.
Let $X$ be the line graph of the complete bipartite graph $K_{16m,16m}$.
Let $Y$ be the $d$-cube, which is the Cartesian product of $d$ copies of $K_2$, and
\begin{equation*}
U_Y(t) = U_{K_2}(t)^{\otimes d} = \begin{bmatrix} \cos t & -\ii\sin t\\-\ii\sin t& \cos t\end{bmatrix}^{\otimes d}.
\end{equation*}
It follows that
\begin{equation*}
U_Y\big((32m-2)\frac{\pi}{8}\big) = U_Y\big((16m-2)\frac{\pi}{8}\big) = U_Y\big((-2)\frac{\pi}{8}\big) 
= \begin{bmatrix} \frac{1}{\sqrt{2}} &  \frac{\ii}{\sqrt{2}} \\  \frac{\ii}{\sqrt{2}}&  \frac{1}{\sqrt{2}}\end{bmatrix}^{\otimes d}.
\end{equation*}
The direct product $X\times Y$ admits proper $\big( \{a\} \times V(Y)\big)$-fractional revival at time $\pi/8$, for any $a \in V(X)$.
\end{example}

\subsection{Double cover}
Given two graphs, $X$ and $Y$, on the same vertex set $V$, we use $X\ltimes Y$ to denote the graph on vertex set $V\times \{0,1\}$ with adjacency matrix
\begin{equation*}
A_{X\ltimes Y} = \begin{bmatrix} A_X & A_Y\\A_Y& A_X\end{bmatrix}.
\end{equation*}
When $A_X=J-I-A_Y$, we have a double cover of the complete graph and $X\ltimes Y$ is also called the switching graph of $X$.

By Lemma~5.1 of \cite{CoutinhoGodsilPSTProducts},
\begin{equation*}
U_{X\ltimes Y} = \frac{1}{2} \begin{bmatrix} e^{-\ii t (A_X+A_Y)} + e^{-\ii t (A_X-A_Y)} & e^{-\ii t (A_X+A_Y)} - e^{-\ii t (A_X-A_Y)}\\
e^{-\ii t (A_X+A_Y)} - e^{-\ii t (A_X-A_Y)} & e^{-\ii t (A_X+A_Y)} + e^{-\ii t (A_X-A_Y)}\end{bmatrix}
\end{equation*}
which is equal to
\begin{equation*}
U_{X\ltimes Y} = \frac{1}{2} \begin{bmatrix} U_X(t) \left(U_Y(t)+U_Y(-t) \right) & U_X(t) \left(U_Y(t)-U_Y(-t) \right)\\
U_X(t) \left(U_Y(t)-U_Y(-t) \right)&U_X(t) \left(U_Y(t)+U_Y(-t) \right)\end{bmatrix}
\end{equation*}
if $A_X$ and $A_Y$ commute.   If there is a time $\tau$ when $U_Y(\tau)=U_Y(-\tau)$, then $X\ltimes Y$ has $\left(V\times \{0\}\right)$-fractional revival.

\begin{example}
\label{EX:Double}
Let $Y$ be the line graph of $K_{n,n}$ for some odd $n$.  We have 
\begin{equation*}
U_Y(\pi) = U_Y(-\pi) = I-2 E,
\end{equation*}
where $E$ is the orthogonal projection onto the $(n-2)$-eigenspace of $A_Y$.
Let $X$ be the complement of $Y$. Since $Y$ is regular, $A(X)$ and $A(Y)$ commute
and
\begin{equation*}
U_{X\ltimes Y} = \begin{bmatrix} U_X(\frac{\pi}{2}) U_Y(\frac{\pi}{2}) & \0 \\ \0 & U_X(\frac{\pi}{2}) U_Y(\frac{\pi}{2}) \end{bmatrix}.
\end{equation*}
\end{example}

\subsection{Join}
Let $X$ be a connected $k$-regular graph on $n$ vertices and $Y$ be a connected $h$-regular graph on $m$ vertices.
The join of $X$ and $Y$ is the graph, $X+Y$, that has adjacency matrix 
\begin{equation*}
A_{X+Y}= \begin{bmatrix} A_X & J_{n\times m} \\ J_{m\times n} & A_Y\end{bmatrix}.
\end{equation*}
Let
\begin{equation*}
A_X=\sum_{r=0}^{d_1} \lambda_r M_r  \quad \text{and} \quad 
A_Y=\sum_{s=0}^{d_2} \mu_s N_s
\end{equation*}
be the spectral decomposition of $A_X$ and $A_Y$, respectively, with $\lambda_0=k$, $M_0=\frac{1}{n}J_n$, $\mu_0=h$ and $N_0=\frac{1}{m}J_m$.
Then $A$ has the spectral decomposition
\begin{align*}
A_{X+Y} =&  \sum_{r=1} \lambda_r \begin{bmatrix} M_r & \0 \\ \0 & \0\end{bmatrix} + 
\sum_{s=1} \mu_s \begin{bmatrix} \0 & \0 \\ \0 & N_s\end{bmatrix} + \\ & +
\theta_1 \begin{bmatrix}\frac{\alpha_1^2}{\alpha_1^2 n + m}   J_n & \frac{\alpha_1}{\alpha_1^2 n + m}  J_{n\times m} \\\frac{\alpha_1}{\alpha_1^2 n + m}  J_{m\times n} &\frac{1}{\alpha_1^2 n + m}  J_m \end{bmatrix} +
\theta_2 \begin{bmatrix} \frac{\alpha_2^2}{\alpha_2^2 n + m}  J_n &\frac{\alpha_2}{\alpha_2^2 n + m} J_{n\times m} \\ \frac{\alpha_2}{\alpha_2^2 n + m} J_{m\times n}  &\frac{1}{\alpha_2^2 n + m}  J_m \end{bmatrix} ,
\end{align*}
where $\alpha_1$ and $\alpha_2$ are roots of the quadratic polynomial $nx^2- (k-h)x-m=0$,  and
$\theta_j = n\alpha_j+h$, for $j=1,2$.
Then
\begin{eqnarray*}
U_{X+Y}(t) &=& \sum_{r=1} e^{-\ii t \lambda_r} \begin{bmatrix} M_r & \0 \\ \0 & \0\end{bmatrix} + 
\sum_{s=1} e^{-\ii t \mu_s} \begin{bmatrix} \0 & \0 \\ \0 & N_s\end{bmatrix} +\\
&&
+ e^{-\ii t \theta_1} \begin{bmatrix}\frac{\alpha_1^2}{\alpha_1^2 n + m}   J_n & \frac{\alpha_1}{\alpha_1^2 n + m}  J_{n\times m} \\\frac{\alpha_1}{\alpha_1^2 n + m}  J_{m\times n} &\frac{1}{\alpha_1^2 n + m}  J_m \end{bmatrix} +
e^{-\ii t \theta_2} \begin{bmatrix} \frac{\alpha_2^2}{\alpha_2^2 n + m}  J_n &\frac{\alpha_2}{\alpha_2^2 n + m} J_{n\times m} \\ \frac{\alpha_2}{\alpha_2^2 n + m} J_{m\times n} &\frac{1}{\alpha_2^2 n + m}  J_m \end{bmatrix}.
\end{eqnarray*}
When $\tau = \frac{2\pi}{\theta_2-\theta_1}$,
\begin{equation*}
e^{-\ii \tau \theta_1}  \frac{\alpha_1}{\alpha_1^2 n + m} +e^{-\ii \tau \theta_2} \frac{\alpha_2}{\alpha_2^2 n + m}  = 0,
\end{equation*}
so $X+Y$ has $V(X)$-fractional revival.

\subsection{Antipodal distance-regular $r$-fold cover of $K_n$}
\label{Section:DRACKn}

A $r$-fold cover of $K_n$ is a graph $X$ in which every vertex of $K_n$ is replaced by a set of $r$ vertices, called a fibre,
with a perfect matching between any two fibres.  
In this example, we consider antipodal distance regular $r$-fold cover of $K_n$ whose spectrum is determined by
the parameters $n$, $r$ and the number, $c$, of common neighbours between two vertices at distance two.
The eigenvalues of $X$ are
\begin{equation*}
\theta_1=n-1, \ \theta_2=-1, \ \theta_3=\frac{\delta+\sqrt{\delta^2+4(n-1)}}{2}, \ \text{and}\ \theta_4=\frac{\delta-\sqrt{\delta^2+4(n-1)}}{2},
\end{equation*}
where $\delta=n-2-rc$.
The direct sum of the eigenspaces corresponding to $\theta_1$ and $\theta_2$ consists of all vectors that are constant on the fibres.
If  the rows and columns of the adjacency matrix $A$ of $X$ are arranged so that it has $n\times n$ blocks of size $r\times r$ corresponding to the fibres, then
the principal idempotents of $A$  satisfy
\begin{equation*}
E_{1}+E_{2} = I_n \otimes \frac{1}{r} J_r \quad \text{and}\quad E_{3}+E_{4} =I_{rn}-(E_1+E_2)= I_n\otimes \big(I_r-\frac{1}{r}J_r\big).
\end{equation*}
When $\delta=2$, $n=4m^2$ and $\tau=\frac{\pi}{2m}$, we have
\begin{equation*}
e^{-\ii \theta_1\tau}=e^{-\ii \theta_2\tau} = e^{\frac{\pi \ii}{2m}}
\quad \text{and}\quad
e^{-\ii\theta_3\tau}=e^{-\ii\theta_4\tau}=e^{\frac{(2m-1)\pi \ii}{2m}}.
\end{equation*}
Similarly, when $\delta=-2$, $n=4m^2$ and $\tau=\frac{\pi}{2m}$, we have
\begin{equation*}
e^{-\ii \theta_1\tau}=e^{-\ii \theta_2\tau} = e^{\frac{\pi \ii}{2m}}
\quad \text{and}\quad
e^{-\ii\theta_3\tau}=e^{-\ii\theta_4\tau}=e^{\frac{(2m+1)\pi \ii}{2m}}.
\end{equation*}
For both cases,
\begin{equation*}
U(\tau) = I_n \otimes  \big(  e^{-\ii\theta_3\tau} I_r + \frac{e^{-\ii \theta_1\tau}-e^{-\ii\theta_3\tau}}{r} J_r \big)
\end{equation*}
and $K$-fractional revival occurs in $X$, for each fibre $K$.

Please see \cite{KlinPech} for the background and some constructions of distance regular antipodal covers of $K_n$ for which $\delta=\pm 2$.
Section~6.1 of \cite{KlinPech} gives an example that has parameters $n=36$, $r=3$ and $c=12$, admitting $K$-fractional revival at time $\frac{\pi}{6}$.

%%%%%%%%%%%%%%%%%%%%%%%%%%%%%%%%%%%%%%%%%%%%%%%%%%%%%%%%%%%%%%%%%%%%%%%%%%%%%
%%%%%%%%%%%%%%%%%%%%%%%%%%%%%%%%%%%%%%%%%%%%%%%%%%%%%%%%%%%%%%%%%%%%%%%%%%%%%
\section{Commuting partitions and ratio condition}
\label{Section:P-ratio}

In Section~\ref{Section:DRACKn}, we get $K$-fractional revival at time $\tau$ by partitioning the eigenspaces of $A$ into two classes such that
\begin{itemize}
\item
the sum of the $E_r$'s in each class has the desired block diagonal structure, and
\item
$e^{-\ii \tau \theta_r} = e^{-\ii \tau \theta_s}$, for $r$ and $s$ in the same class of the partition.
\end{itemize}
We see that the above approach works in general.

\begin{theorem}
\label{Thm:Char1}
Let $A$ be the adjacency matrix of a weighted graph $X$ with spectral decomposition
\begin{equation*}
A = \sum_{r=0}^d \theta_r E_r.
\end{equation*}
For $K\subset V(X)$, $K$-fractional revival occurs in $X$ at time $\tau$ if and only if there exists a partition $P$ of $\{0,1,\ldots,d\}$ 
such that
\begin{enumerate}[(i)]
\item
\label{Cond:Char1i}
$\big(\sum_{r\in C} E_r \big) D_K=D_K \big(\sum_{r\in C} E_r \big)$, for each class $C$ of $P$,  and
\item
\label{Cond:Char1ii}
$e^{-\ii \tau \theta_r} = e^{-\ii \tau \theta_s}$ whenever  $r$ and $s$ belong to the same class of $P$.
\end{enumerate}
\end{theorem}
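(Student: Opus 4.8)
The plan is to reduce the statement to the single commuting relation $U(\tau) D_K = D_K U(\tau)$, which is exactly the condition extracted in the proof of the Proposition above and is equivalent to $K$-fractional revival occurring at time $\tau$. Granting that reformulation, I would prove both implications by passing between the spectral idempotents $E_r$ of $A$ and the spectral idempotents of the unitary $U(\tau)$, using the fact that the latter are polynomials in $U(\tau)$.

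For the ``if'' direction, suppose a partition $P$ satisfying \eqref{Cond:Char1i} and \eqref{Cond:Char1ii} exists. For each class $C$ I set $F_C = \sum_{r \in C} E_r$. Condition \eqref{Cond:Char1ii} says that $r \mapsto e^{-\ii \tau \theta_r}$ is constant on $C$; write $\lambda_C$ for its common value. Grouping the spectral sum for $U(\tau)$ according to $P$ gives
\begin{equation*}
U(\tau) = \sum_{r=0}^d e^{-\ii \tau \theta_r} E_r = \sum_{C \in P} \lambda_C F_C .
\end{equation*}
By condition \eqref{Cond:Char1i} each $F_C$ commutes with $D_K$, hence so does the linear combination $U(\tau) = \sum_{C} \lambda_C F_C$, and $K$-fractional revival occurs.

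For the ``only if'' direction, suppose $K$-fractional revival occurs at $\tau$, so $U(\tau)$ commutes with $D_K$. I would take $P$ to be the partition of $\{0,1,\ldots,d\}$ into the level sets of $r \mapsto e^{-\ii \tau \theta_r}$; condition \eqref{Cond:Char1ii} then holds by construction. To verify \eqref{Cond:Char1i}, I observe that the distinct values $\lambda$ attained by $e^{-\ii \tau \theta_r}$ are precisely the eigenvalues of $U(\tau)$, and that the class-sum $F_C = \sum_{r \in C} E_r$ is exactly the orthogonal projection onto the $\lambda$-eigenspace of $U(\tau)$. Since these eigenvalues are distinct, Lagrange interpolation expresses each $F_C$ as a polynomial in $U(\tau)$; as $U(\tau)$ commutes with $D_K$, every polynomial in $U(\tau)$ does too, so $F_C D_K = D_K F_C$, which is \eqref{Cond:Char1i}.

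The conceptual crux, and the step I would guard most carefully, is this ``only if'' direction. The equivalence looks asymmetric because condition \eqref{Cond:Char1i} is phrased through the idempotents $E_r$ of $A$, whereas the revival hypothesis only directly controls $U(\tau)$. The resolution is the identification of each class-sum $F_C$ with a spectral idempotent of $U(\tau)$, which is a polynomial in $U(\tau)$ and therefore inherits commutativity with $D_K$; everything else is routine bookkeeping with the spectral decomposition.
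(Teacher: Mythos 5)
Your proposal is correct and follows essentially the same route as the paper's proof: both directions reduce to the commutation $U(\tau)D_K = D_K U(\tau)$, the partition in the converse is taken to be the level sets of $r \mapsto e^{-\ii\tau\theta_r}$, and each class-sum $\sum_{r\in C}E_r$ is recovered as a polynomial in $U(\tau)$ via Lagrange interpolation, hence commutes with $D_K$.
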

\begin{proof}
It is straightforward to see that if there exists a partition $P$ and a time $\tau$ satisfying the above conditions,
then $U(\tau) D_K=D_K U(\tau)$ and $K$-fractional revival occurs.

Conversely, suppose $K$-fractional revival occurs at time $\tau$.
Let $P$ be the partition of $\{0,1,\ldots,d\}$ where $r$ and $s$ belong to the same class if and only if 
$e^{-\ii\tau\theta_r}=e^{-\ii\tau\theta_s}$.  
For each class $C$ of $P$, let $\lambda_C = e^{-\ii\tau\theta_r}$ for some $r\in C$.  Then
\begin{equation*}
U(\tau)= \sum_{C\in P} \lambda_C \big(\sum_{r\in C} E_r \big).
\end{equation*}
By Lagrange interpolation, we can express the matrix $\sum_{r\in C} E_r$ as a polynomial in $U(\tau)$, for each class $C$.
Since $U(\tau)$ commutes with $D_K$, so does the matrix $\sum_{r\in C} E_r$, for  each class $C$, so (\ref{Cond:Char1i}) holds.
\end{proof}

Note that if $P$ has one class $C=\{0,1,\ldots,d\}$ then
$\sum_{r\in C} E_r =I_n$
and Condition~(\ref{Cond:Char1i}) holds for any subset $K$ of $V(X)$.   
Hence  the partition in Theorem~\ref{Thm:Char1} has at least two classes for proper  $K$-fraction revival to occur.

\begin{definition}
\label{Def:CommutePartition}
Given the spectral decomposition $A=\sum_{r=0}^d \theta_rE_r$, we say a partition $P$ of $\{0,1,\ldots,d\}$ \textsl{commutes with $D_K$}
if 
\begin{equation*}
\big(\sum_{r\in C} E_r \big) D_K=D_K \big(\sum_{r\in C} E_r \big),
\end{equation*}
for each class $C$ of $P$.
\end{definition}
Given two partitions $P$ and $Q$ of $\{0,1,\ldots,d\}$, we say $P \leq Q$ if $P$ is a refinement of $Q$, that is,
every class of $P$ is a subset of some class of $Q$.
The \textsl{meet of $P$ and $Q$}, $P\land Q$, is the partition with classes being the non-empty intersections of the classes of $P$ and $Q$.
If $C_1 \cap C_2$ is a class of $P\land Q$ then
\begin{equation*}
\sum_{r\in C_1 \cap C_2} E_r = \big(\sum_{r\in C_1} E_r \big)  \big(\sum_{s\in C_2} E_s\big).
\end{equation*}
Hence if both $P$ and $Q$ commute with $D_K$ then so does $P \land Q$.    
Consequently, there exists a unique minimal partition that commutes with $D_K$, we use $P_{min}^K$ to denote this partition.

\begin{theorem}\label{thm:inequalityandparallel}
    Let $A$ be the adjacency matrix of a weighted graph $X$ with spectral decomposition $A = \sum_{r=0}^d \theta_r E_r$. Let $K \subseteq V(X)$ and consider the minimal partition $P_{min}^K = \{C_1,...,C_\ell\}$ that commutes with $D_K$. Let $E_{C_j} = \sum_{r \in C_j} E_r$. Then
    \[|\{j : E_{C_j}D_K \neq 0\}| \leq |K|.\]
    If equality holds, then, for all $r$, the columns of $E_r$ corresponding to vertices in $K$ are parallel vectors.
\end{theorem}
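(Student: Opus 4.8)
The plan is to reduce everything to the $|K|\times|K|$ principal submatrices of the projections $E_{C_j}$ and to exploit that these compressions form a resolution of the identity on the coordinate subspace indexed by $K$. Write $D_K$ for the orthogonal projection onto $\C^K:=\spn\{e_a:a\in K\}$. Each $E_{C_j}$ is a real symmetric (orthogonal) projection that commutes with $D_K$, so it leaves $\C^K$ invariant, and its restriction to $\C^K$ coincides with the compression $F_j:=(E_{C_j})_{K,K}$. First I would check that each $F_j$ is again an orthogonal projection on $\C^K$: idempotence follows from $E_{C_j}^2=E_{C_j}$ together with invariance, and self-adjointness on $\C^K$ is inherited from that of $E_{C_j}$ on $\C^n$. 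Because the $E_{C_j}$ are pairwise orthogonal and sum to $I_n$, the same survives under compression: $F_iF_j=\0$ for $i\neq j$ and $\sum_j F_j=I_{|K|}$.

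Granting this, the first assertion is a rank count. Taking the trace in $\sum_j F_j=I_{|K|}$ gives $\sum_j \rk F_j=\Tr I_{|K|}=|K|$, since the rank of a projection equals its trace. One checks directly that $E_{C_j}D_K=\0$ exactly when $F_j=\0$ (by invariance, both say $E_{C_j}$ annihilates $\C^K$). Every nonzero $F_j$ contributes at least $1$ to the sum, so the number of nonzero $F_j$, equivalently $|\{j:E_{C_j}D_K\neq\0\}|$, is at most $|K|$.

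For the equality case I would argue that $|\{j:F_j\neq\0\}|=|K|$ forces each nonzero $F_j$ to have rank exactly $1$, since $|K|$ positive integers summing to $|K|$ must all equal $1$. Now fix any $r$, lying in the unique class $C_j$ containing it, and write $F_j=\sum_{s\in C_j}(E_s)_{K,K}$. Each summand $(E_s)_{K,K}$ is positive semidefinite, being a principal submatrix of the positive semidefinite $E_s$; for positive semidefinite matrices the column space of a sum is the sum of the column spaces, so $\operatorname{col}\big((E_r)_{K,K}\big)\subseteq\operatorname{col}(F_j)$, a space of dimension at most $1$ (when $F_j=\0$ one gets $(E_r)_{K,K}=\0$ outright). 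Hence $\rk\big((E_r)_{K,K}\big)\le 1$ for every $r$. Finally I would identify $(E_r)_{K,K}$ as the Gram matrix of the vectors $\{E_r e_a:a\in K\}$ via $\langle E_r e_a,E_r e_b\rangle=(E_r)_{a,b}$; a Gram matrix of rank at most $1$ means these vectors span a line, i.e.\ the columns of $E_r$ indexed by $K$ are pairwise parallel, as claimed.

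The routine bookkeeping is easy; the two steps that carry the argument are (i) verifying that compression to $\C^K$ turns the mutually orthogonal resolution $\{E_{C_j}\}$ into a mutually orthogonal resolution $\{F_j\}$ of $I_{|K|}$ — this is exactly where the commuting hypothesis, i.e.\ invariance of $\C^K$, is used — and (ii) the passage from ``$F_j$ has rank $1$'' back to the individual $E_r$. I expect (ii), specifically the positive-semidefinite ``sum of column spaces'' fact, to be the point requiring the most care, since a priori an individual $E_r$ need not commute with $D_K$ and its $K$-to-$\overline{K}$ block may be nonzero; it is only the positive semidefiniteness of the diagonal blocks $(E_s)_{K,K}$ that lets one control each $E_r$ from the aggregate $F_j$. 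I note that minimality of $\pmin$ is not actually needed for either direction — the argument applies verbatim to any partition commuting with $D_K$ — so I would simply invoke it to match the statement.
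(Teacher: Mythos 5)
Your proof is correct and follows essentially the same route as the paper's: your compressions $F_j=(E_{C_j})_{K,K}$ are exactly the nonzero blocks of the paper's mutually orthogonal projections $D_KE_{C_j}$, and counting their traces is the same as counting orthogonal images inside the $|K|$-dimensional range of $D_K$. The only (harmless) variation is in the equality case, where you pass from $\rk F_j=1$ to $\rk\big((E_r)_{K,K}\big)\le 1$ via positive semidefiniteness and a Gram-matrix identification, whereas the paper uses orthogonality of the ranges of the $E_r$; both work, and you are also right that minimality of $P_{min}^K$ is never needed.
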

\begin{proof}
    For some $C \in P_{min}^K$, note that $D_K E_C$ maps vectors to a subspace of dimension $|K|$, determined by the entries corresponding to the vertices in $K$. This is a orthogonal projection, as $D_K$ and $E_C$ commute, and if $C_i \neq C_j$, $D_K E_{C_i} D_K E_{C_j} = 0$. Thus the images of $D_K E_{C_j}$ are all orthogonal subspaces, whence the inequality follows.
    
    When equality holds, it follows that if $D_K E_C \neq 0$, then $D_K E_C$ has rank one. Because 
    \[D_K E_C = \sum_{r \in C} D_K E_r ,\]
    and the $E_r$ are projections onto orthogonal subspaces, it follows that each $D_K E_r$ has rank one or zero, as we wanted to show.
\end{proof}

Let us now characterize when fractional revival happens with respect to an eigenvalue condition associated to the minimal commuting partition.

\begin{definition}
\label{Def:Ratio}
The eigenvalues $\theta_0, \theta_1,\ldots, \theta_d$ of $A$ satisfy the \textsl{ratio condition} with respect to a partition $P$ of $\{0,1,\ldots,d\}$ if,
for any  $r$ and $s$ in the same class of $P$ and distinct elements $h$ and $k$ in the same class of $P$, the ratio
\begin{equation*}
\frac{\theta_r -\theta_s}{\theta_h-\theta_k}
\end{equation*}
is rational.
\end{definition}

Here is a characterization of $K$-fractional revival in terms of $P_{min}^K$ and the ratio condition.
Note that the definition of $K$-fractional revival includes periodicity at all vertices in $K$.
\begin{theorem}
\label{Thm:Char2}
Let $A$ be the adjacency matrix of a weighted graph $X$ with spectral decomposition
\begin{equation*}
A = \sum_{r=0}^d \theta_r E_r.
\end{equation*}
For $K \subset V(X)$, $K$-fractional revival occurs in $X$ if and only if the eigenvalues of $A$ satisfy the ratio condition with respect to $P_{min}^K$.
\end{theorem}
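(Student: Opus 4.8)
The plan is to run both implications through Theorem~\ref{Thm:Char1}, using the fact that $P_{min}^K$ is the unique \emph{finest} partition commuting with $D_K$. Recall that Theorem~\ref{Thm:Char1} characterizes $K$-fractional revival at a time $\tau$ as the existence of \emph{some} partition $P$ that commutes with $D_K$ and on whose classes the phases $e^{-\ii\tau\theta_r}$ are constant. The entire content of the present theorem is that one may always take this $P$ to be $P_{min}^K$, and that the constant-phase requirement on the classes of $P_{min}^K$ is equivalent to the ratio condition. So I would fix $P_{min}^K$ throughout and translate condition~(\ref{Cond:Char1ii}) into a statement about commensurability of eigenvalue differences.

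For the forward direction, suppose $K$-fractional revival occurs at some $\tau>0$. Theorem~\ref{Thm:Char1} supplies a partition $P$ commuting with $D_K$ with constant phases on each class. Since $P_{min}^K$ is minimal among partitions commuting with $D_K$, it refines $P$, so any $r,s$ in a common class of $P_{min}^K$ lie in a common class of $P$; hence $e^{-\ii\tau(\theta_r-\theta_s)}=1$, i.e. $\tau(\theta_r-\theta_s)\in 2\pi\Z$. Given $r,s$ in one class of $P_{min}^K$ and distinct $h,k$ in a (possibly different) class, I would write $\tau(\theta_r-\theta_s)=2\pi a$ and $\tau(\theta_h-\theta_k)=2\pi b$ with $a,b\in\Z$, note $b\neq 0$ because $h\neq k$ forces $\theta_h\neq\theta_k$ while $\tau>0$, and conclude $\frac{\theta_r-\theta_s}{\theta_h-\theta_k}=\frac{a}{b}\in\Q$. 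This is exactly the ratio condition with respect to $P_{min}^K$.

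For the reverse direction, assuming the ratio condition relative to $P_{min}^K$, I must manufacture a single time $\tau>0$ satisfying condition~(\ref{Cond:Char1ii}); Theorem~\ref{Thm:Char1} then finishes the proof, since $P_{min}^K$ commutes with $D_K$ by definition. Let $\mathcal D$ be the finite set of nonzero differences $\theta_r-\theta_s$ over all pairs $r,s$ in a common class of $P_{min}^K$. If $\mathcal D=\emptyset$ every class is a singleton and any $\tau>0$ works, so I assume $\mathcal D\neq\emptyset$ and fix $\rho\in\mathcal D$ with $\rho>0$. The ratio condition gives $d=q_d\rho$ with $q_d\in\Q$ for every $d\in\mathcal D$; taking $N$ to be a common multiple of the denominators of the finitely many $q_d$ and setting $\tau=\frac{2\pi N}{\rho}>0$ yields $\tau d=2\pi N q_d\in 2\pi\Z$ for all $d\in\mathcal D$, so the phases are constant on each class. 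The one genuinely delicate point — and the main obstacle — is precisely this passage from commensurability to a common period: it succeeds only because the ratio condition, applied \emph{across} classes and not merely within a single class, forces all within-class differences to be pairwise rationally related, so that one reference $\rho$ and one common denominator $N$ suffice to realize every class's phase equality simultaneously.
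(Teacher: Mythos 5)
Your proof is correct and follows essentially the same route as the paper: the paper's (one-sentence) proof implicitly invokes Theorem~\ref{Thm:Char1} with $P=P_{min}^K$ and asserts the equivalence between constancy of the phases $e^{-\ii\tau\theta_r}$ on the classes of $P_{min}^K$ and the ratio condition, which is exactly what you verify in detail. Your two added observations --- that minimality of $P_{min}^K$ means it refines the partition produced by Theorem~\ref{Thm:Char1} (giving the forward direction), and the explicit construction of $\tau$ via a common denominator (giving the converse) --- are precisely the details the paper leaves to the reader.
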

\begin{proof}
There exists a time $\tau$ such that $e^{-\ii\tau\theta_r}=e^{-\ii\tau\theta_s}$, for all $r$ and $s$ in the same class of $P_{min}^K$,
if and only if $\theta_0, \ldots, \theta_d$ satisfy the ratio condition.
\end{proof}

%%%%%%%%%%%%%%%%%%%%%%%%%%%%%%%%%%%%%%%%%%%%%%%%%%%%%%%%%%%%%%%%%%%%%%%%%%%%%
%%%%%%%%%%%%%%%%%%%%%%%%%%%%%%%%%%%%%%%%%%%%%%%%%%%%%%%%%%%%%%%%%%%%%%%%%%%%%
\section{Minimum commuting partition}
\label{Section:Pmin}

In this section, we investigate the minimum partition, $P_{min}^K$, commuting with $D_K$  as it plays an important role in the characterization of $K$-fractional revival in $X$.
For matrix $M$ with rows and columns indexed by the vertices in $X$,  we use
$\wt{M}$ to denote the  submatrix of $M$ with rows and columns indexed by the elements of $K$.
For a vector $v \in \C^{V(X)}$, we use $\tp{v}$ and $\bttm{v}$ to denote the restriction of  $v$ to the elements in $K$ and the restriction of $v$ to the elements in $V(X)\backslash K$, respectively.

Suppose $P_{min}^K = \{C_1, \ldots, C_z\}$.   For $j=1,\ldots, z$,  let
\begin{equation*}
F_j= \sum_{r\in C_j} E_r.
\end{equation*}
It follows from the definition of $P_{min}^K$ that
\begin{equation*}
F_j = \begin{bmatrix} \wt{F_j} & \0 \\ \0 & F_j'\end{bmatrix},
\end{equation*}
for some $(n-|K|)\times (n-|K|)$ idempotent matrix $F_j'$.
The $F_j$'s satisfy $\sum_{j=1}^z F_j = I_{n}$,
\begin{equation*}
F_j F_h = \delta_{j,h} F_j, \quad \forall h, j,
\end{equation*}
and
\begin{equation}
\label{Eqn:FE}
F_j E_r = E_r F_j=
\begin{cases} 
E_r & \text{if $r\in C_j$,}\\
\0 & \text{otherwise.}
\end{cases}
\end{equation}
Restricting to the set $K$ yields 
\begin{eqnarray}
\label{Eqn:K-SumF}
&\sum_{j=1}^z \wt{F_j} = I_{|K|},\\
\label{Eqn:K-Idem}
&\wt{F_j}  \wt{F_h} = \delta_{j,h} \wt{F_j}, \quad \forall h, j,
\end{eqnarray}
and
\begin{equation}
\label{Eqn:K-FE}
\wt{F_j} \wt{E_r} = \wt{E_r} \wt{F_j}=
\begin{cases} 
\wt{E_r} & \text{if $r\in C_j$,}\\
\0 & \text{otherwise.}
\end{cases}
\end{equation}

\begin{proposition}
\label{Prop:ZeroBlock}
Let $r\in C_j$.  Then 
 $\wt{F_j}\neq \0$ if and only if $\wt{E_r}\neq \0$.
\end{proposition}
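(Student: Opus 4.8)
The plan is to prove the logically equivalent statement that $\wt{F_j}=\0$ if and only if $\wt{E_r}=\0$. The backward implication (the easy one) rests purely on positivity, while the forward implication is where the minimality of $P_{min}^K$ must be invoked. Throughout I will use that commuting with $D_K$ is the same as being block diagonal with respect to the splitting of $V(X)$ into $K$ and $V(X)\backslash K$, exactly as recorded for the $F_j$ in the preceding paragraphs.

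For the direction $\wt{F_j}=\0 \Rightarrow \wt{E_r}=\0$, I would use that each $E_r$, being an orthogonal projection, is positive semidefinite, so each principal submatrix $\wt{E_s}$ is positive semidefinite as well. Since $\wt{F_j}=\sum_{s\in C_j}\wt{E_s}$ is then a sum of positive semidefinite matrices, $\wt{F_j}=\0$ forces every summand to vanish, in particular $\wt{E_r}=\0$.

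The reverse direction, $\wt{E_r}=\0 \Rightarrow \wt{F_j}=\0$, is the crux. First I would establish the key fact that a zero diagonal block of a positive semidefinite matrix forces the adjacent off-diagonal block to vanish: writing $E_r$ in block form with respect to $K$ and $V(X)\backslash K$, the hypothesis $\wt{E_r}=\0$ on the $(K,K)$ block forces the $(K,V(X)\backslash K)$ block of $E_r$ to vanish too (otherwise scaling along a suitable direction would drive the quadratic form of $E_r$ negative). Hence $E_r$ is block diagonal, i.e.\ $E_r$ commutes with $D_K$. Since $F_j$ also commutes with $D_K$, so does $F_j-E_r=\sum_{s\in C_j,\, s\neq r} E_s$. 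Refining $P_{min}^K$ by replacing the class $C_j$ with the two classes $\{r\}$ and $C_j\backslash\{r\}$ then yields a partition that still commutes with $D_K$ and is strictly finer than $P_{min}^K$, unless $C_j\backslash\{r\}=\emptyset$. By minimality of $P_{min}^K$ the strictly finer partition is impossible, so $C_j=\{r\}$, whence $\wt{F_j}=\wt{E_r}=\0$, as needed.

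I expect the main obstacle to be the positive-semidefinite block lemma together with its clean application: one must notice that the condition $\wt{E_r}=\0$ on a \emph{diagonal} block is already strong enough to kill the \emph{off-diagonal} blocks and thereby make $E_r$ commute with $D_K$, and then that the resulting split of $C_j$ genuinely produces a finer commuting partition contradicting the minimality of $P_{min}^K$. Everything else is routine bookkeeping with the idempotent relations for the $F_j$ established above.
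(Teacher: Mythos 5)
Your proof is correct and follows essentially the same route as the paper's: the forward direction is the same positivity argument (the paper factors $E_r=B_r^TB_r$ and works with diagonal entries where you use the zero-diagonal-block lemma for positive semidefinite matrices, but these are the same fact), and the reverse direction likewise shows $E_r$ commutes with $D_K$ and then invokes minimality of $P_{min}^K$ to force $C_j=\{r\}$. Your write-up actually makes explicit the refinement step that the paper leaves implicit when it asserts ``$\{r\}$ is a class of $P_{min}^K$.''
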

\begin{proof}
Each principal idempotent $E_r$ is positive semidefinite, so there exists a matrix $B_r$ such that $E_r= B_r^T B_r$. 
So $(E_r)_{w,w} \geq 0$, for $w\in V(X)$.
If $(F_j)_{w,w}=0$ then $(E_r)_{w,w}=0$ and both $e_w^TE_r$ and $E_re_w$ are zero vectors.  
Hence, $\wt{F_j}= \0$ implies $\wt{E_r}=\0$.

If $\wt{E_r}=\0$ then the first $|K|$ columns of $B_r$ are zero, and
the first $|K|$ rows and columns  of $E_r$ are zero.   In this case $E_r D_K=D_KE_r$ and $\{r\}$ is a class of $P_{min}^K$.   As a result,
$\wt{F_j} = \wt{E_r} = \0$.
\end{proof}

\begin{proposition}
\label{Prop:Connected}
Let $X$ be a connected graph with at least two non-zero $\wt{F_j}$'s.  Then all of the non-zero $\wt{F_j}$'s are non-diagonal.
\end{proposition}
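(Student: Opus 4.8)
The plan is to argue by contraposition: I assume that some nonzero block, say $\wt{F_1}$, is diagonal, keep the standing hypothesis that at least two of the $\wt{F_j}$ are nonzero, and aim to produce a pair of vertices joined by no walk at all, contradicting connectivity. First I would unpack what ``diagonal'' means. Each $\wt{F_j}$ is symmetric (a principal submatrix of the symmetric projection $F_j$) and idempotent by~(\ref{Eqn:K-Idem}), so a diagonal $\wt{F_1}$ is a $0/1$ diagonal matrix; set $S=\{a\in K:(\wt{F_1})_{a,a}=1\}$ and $D_S=\sum_{a\in S}e_ae_a^{T}$. Since $\wt{F_1}\neq\0$ we have $S\neq\emptyset$, and since at least two blocks are nonzero, (\ref{Eqn:K-SumF}) together with the positive semidefiniteness of the idempotents $\wt{F_j}$ forces $\wt{F_1}\neq I_{|K|}$, hence $K\setminus S\neq\emptyset$.

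Next I would convert this into statements about the columns of $F_1$. Because $F_1$ commutes with $D_K$ and its $K$-block is the coordinate projection onto $S$, one gets $F_1D_K=D_KF_1D_K=D_S$, so that $F_1e_a=e_a$ for every $a\in S$ and $F_1e_b=\0$ for every $b\in K\setminus S$. (This is the same mechanism as in Proposition~\ref{Prop:ZeroBlock}: since $F_1$ is a projection, $(F_1)_{a,a}=1$ forces $F_1e_a=e_a$, and $(F_1)_{b,b}=0$ forces $F_1e_b=\0$.)

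Now I would fix $a\in S$ and $b\in K\setminus S$ and exploit that $F_1=\sum_{r\in C_1}E_r$ is a sum of \emph{full} principal idempotents. Then $e_a\in\im F_1=\bigoplus_{r\in C_1}\im E_r$ while $e_b\in\im(I-F_1)=\bigoplus_{r\notin C_1}\im E_r$, two orthogonal $A$-invariant subspaces. Hence $E_re_a=\0$ for $r\notin C_1$ and $E_re_b=\0$ for $r\in C_1$, so $(E_r)_{a,b}=e_a^{T}E_re_b=0$ for \emph{every} $r$, giving $(A^m)_{a,b}=\sum_r\theta_r^m(E_r)_{a,b}=0$ for all $m\geq0$. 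In other words $a$ and $b$ have disjoint eigenvalue supports, so no weighted walk of any length joins them.

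The final step is the connectivity contradiction, and this is the step I expect to be the main obstacle. For a connected graph with nonnegative edge weights it closes at once: the vertices reachable from $a$ by a walk of nonzero weight-product form exactly the connected component of $a$, which is all of $V(X)$, so $(A^{\ell})_{a,b}\neq0$ for $\ell=\mathrm{dist}(a,b)$, contradicting the previous paragraph; thus no nonzero $\wt{F_j}$ can be diagonal. The delicate point deserving explicit care is precisely this inference, which relies on shortest-path walks not cancelling. For arbitrary real (signed) weights the implication ``$(A^m)_{a,b}=0$ for all $m$ $\Rightarrow$ $X$ disconnected'' is \emph{not} automatic, so the write-up should either restrict to the nonnegatively weighted setting relevant to the examples or supply an independent irreducibility argument guaranteeing $(A^m)_{a,b}\neq0$ for some $m$; I would present the walk-reachability version as the main line and flag this hypothesis.
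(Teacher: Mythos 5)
Your proof is correct and takes essentially the same route as the paper's: both turn a diagonal nonzero $\wt{F_j}$ into a $0/1$ diagonal projection, conclude that every $\wt{E_r}$, and hence every $\wt{A^k}$, is block diagonal with respect to the resulting bipartition of $K$, and then invoke connectivity; the only difference is bookkeeping, as the paper argues entrywise using positive semidefiniteness of the restricted idempotents where you use $\im F_1$ and $\ker F_1$ as orthogonal $A$-invariant subspaces. As for the caveat you flag, the paper's proof ends with precisely the step you single out (``$\wt{A^k}$ is a block diagonal matrix for all $k\geq 0$, which implies that $X$ is not a connected graph'') with no further justification, so your observation that the inference from ``$(A^m)_{a,b}=0$ for all $m$'' to disconnectedness requires nonnegative weights (or some other no-cancellation hypothesis) applies equally to the published argument --- the paper does allow arbitrary real weights, so this is a substantive point and your write-up is the more careful of the two.
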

\begin{proof}
If $\wt{F_j}$ is a diagonal matrix then the Equation~(\ref{Eqn:K-Idem}) implies that $\wt{F_j}$ has only $0$ or $1$ in its diagonal entries. From Equation~\eqref{Eqn:K-SumF} and the fact that $\wt{F_\ell}$s are positive semidefinite, any other $\wt{F_\ell}$ must have $0$ diagonal entries, and thus an entire block of $0$s, corresponding to the diagonal entries of $\wt{F_j}$ equal to $1$.

For any $r\in C_\ell$, $(E_r)_{a,b}=0$ when $(\wt{F_\ell})_{a,a}=0$ or $(\wt{F_\ell})_{b,b}=0$. Thus all $\wt{E_r}$s are block diagonal matrices, up to permutation of rows and columns, with their blocks restricted either to rows and columns where $\wt{F_j}$ has non-zero diagonal entries if $j = \ell$, or to rows and columns where $\wt{F_j}$ has zero diagonal entries if $j \neq \ell$.

Hence $\wt{A^k}$ is a block diagonal matrix, for all $k\geq 0$, which implies that $X$ is not a connected graph.
\end{proof}

By Equation~(\ref{Eqn:K-SumF}),
if a graph has only one non-zero $\wt{F_j}$
then $\wt{F_j}=I_{|K|}$.
The following example is a connected graph with exactly one non-zero $\wt{F_j}$.
\begin{example}
\label{Ex:DiagF}
The graph $X$
\begin{center}
\begin{tikzpicture}
\fill (0,0) circle (1.5pt);
\draw (0,0) node[anchor=north]{\small $1$};
\fill (1,0) circle (1.5pt);
\draw (1,0) node[anchor=north]{\small $2$};
\fill (2,0) circle (1.5pt);
\draw (2,0) node[anchor=north]{\small $3$};
\fill (3,1) circle (1.5pt);
\draw (3,1) node[anchor=north]{\small $4$};
\fill (3,-1) circle (1.5pt);
\draw (3,-1) node[anchor=north]{\small $5$};

\draw (0,0)--(1,0)--(2,0);
\draw (2,0)--(3,1);
\draw (2,0)--(3,-1);
\end{tikzpicture}
\end{center}
has spectrum
\[
\theta_1=\sqrt{2+\sqrt{2}}, \quad
\theta_2=-\sqrt{2+\sqrt{2}}, \quad
\theta_3=\sqrt{2-\sqrt{2}}, \]
\[
\theta_4=-\sqrt{2-\sqrt{2}}, \quad\text{and}\quad
\theta_5=0.
\]
Let $K=\{1,2,3\}$.  The partition $P_{min}^K$ has two classes $C_1=\{1,2,3,4\}$ and $C_2=\{5\}$, with
\begin{equation*}
F_1=\begin{bmatrix} 1& 0 & 0&0&0\\0&1&0&0&0\\0&0&1&0&0\\0&0&0&0.5&0.5\\0&0&0&0.5&0.5\end{bmatrix}
\quad \text{and}\quad
F_2=\begin{bmatrix} 0& 0 & 0&0&0\\0&0&0&0&0\\0&0&0&0&0\\0&0&0&0.5&-0.5\\0&0&0&-0.5&0.5\end{bmatrix},
\end{equation*}
so
\begin{equation*}
\wt{F_1} = I_3 \quad \text{and}\quad 
\wt{F_2}=\0.
\end{equation*}
\end{example}

Theorem~\ref{Thm:Char2} states that the ratio condition with respect to $P_{min}^K$ must be satisfied if $K$-fractional revival occurs in $X$.
We also see in Section~\ref{Section:K-FR}, from the viewpoint of quantum walk on density matrices, that the condition
\begin{equation}
\label{Eqn:Ratio}
\frac{\theta_r-\theta_s}{\theta_h-\theta_k} \in \Q, \quad \forall (\theta_r,\theta_s), (\theta_h,\theta_k) \in \Phi_K  \text{where $\theta_h\neq \theta_k$}
\end{equation}
is necessary for $K$-fractional revival.   We now show that these two ratio conditions are equivalent.

\begin{lemma}
\label{Lem:K-support}
If $(\theta_r,\theta_s) \in \Phi_K$ then $r$ and $s$ belong to the same class, say $C_j$, of $P_{min}^K$, where $\wt{F_j} \neq \0$.
\end{lemma}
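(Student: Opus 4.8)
The plan is to treat the two assertions separately, in each case assuming the conclusion fails and deriving $E_r D_K E_s = 0$, contradicting $(\theta_r,\theta_s)\in\Phi_K$.

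First I would show that $r$ and $s$ lie in a common class of $P_{min}^K$. Suppose instead $r\in C_j$ and $s\in C_h$ with $j\neq h$. The tools are that $P_{min}^K$ commutes with $D_K$ (so each $F_\ell$ commutes with $D_K$) together with Equation~(\ref{Eqn:FE}), which gives $E_rF_j=F_jE_r=E_r$ since $r\in C_j$, and $F_jE_s=0$ since $s\notin C_j$. Inserting $F_j$ after $E_r$, then moving it through $D_K$ by the commuting property, and finally annihilating $E_s$, I obtain
\[
E_r D_K E_s \;=\; E_r F_j D_K E_s \;=\; E_r D_K F_j E_s \;=\; 0,
\]
a contradiction. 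Hence both $r$ and $s$ belong to a single class $C_j$.

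Next I would show $\wt{F_j}\neq\0$, and this is where Proposition~\ref{Prop:ZeroBlock} does the work. If $\wt{F_j}=\0$ then, because $r\in C_j$, that proposition gives $\wt{E_r}=\0$. The step I then need is that a vanishing $(K,K)$-block of $E_r$ forces the whole $K$-indexed column block of $E_r$ to vanish. This is exactly the positive-semidefiniteness observation embedded in the proof of Proposition~\ref{Prop:ZeroBlock}: writing $E_r=B_r^TB_r$, the diagonal entries $(E_r)_{a,a}=\|B_re_a\|^2$ vanish for every $a\in K$, so $B_re_a=\0$ and hence $E_re_a=\0$ for all $a\in K$. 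Therefore $E_rD_K=\sum_{a\in K}(E_re_a)e_a^T=\0$, giving $E_rD_KE_s=\0$ and again contradicting $(\theta_r,\theta_s)\in\Phi_K$.

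The first step is essentially formal, relying only on the defining commutation of $P_{min}^K$ with $D_K$ and the idempotent algebra of the $E_r$ and $F_j$. The main (though mild) obstacle is the bookkeeping in the second step that upgrades ``the $(K,K)$-block is zero'' to ``the entire $K$-column block is zero''; since this is precisely the argument already recorded in Proposition~\ref{Prop:ZeroBlock}, I would cite that proposition for $\wt{E_r}=\0$ and reuse its internal conclusion that $E_re_a=\0$ for each $a\in K$, rather than redoing the computation.
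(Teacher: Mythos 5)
Your proof is correct and takes essentially the same approach as the paper: the paper also inserts the idempotents $F_h$ and $F_j$ around $D_K$ and uses Equations~(\ref{Eqn:FE}) and (\ref{Eqn:K-Idem}) to get $E_r D_K E_s = \delta_{h,j}\, E_r \bigl(\begin{smallmatrix}\wt{F_j} & \0\\ \0 & \0\end{smallmatrix}\bigr) E_s$, which kills both the ``different classes'' case and the $\wt{F_j}=\0$ case in one computation. Your only deviation is routing the second half through Proposition~\ref{Prop:ZeroBlock} and positive semidefiniteness rather than reading it off the same block identity, which is a valid but slightly longer path to the same conclusion.
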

\begin{proof}
Suppose $r \in C_h$ and $s\in C_j$.
Equations~(\ref{Eqn:FE}) and (\ref{Eqn:K-Idem}) give
\begin{equation*}
E_r D_K E_s = E_r F_h D_K F_j E_s 
= E_r \begin{bmatrix} \wt{F_h}\wt{F_j} & \0 \\ \0 & \0 \end{bmatrix} E_s= \delta_{h,j} E_r  \begin{bmatrix}\wt{F_j} & \0 \\ \0 & \0 \end{bmatrix} E_s.
\end{equation*}
\end{proof}

\begin{lemma}
\label{Lem:MinClass}
For each class, $C_j$, of $P_{min}^K$, there does not exist a partition $C_j=S_1 \cup S_2$ such that
\begin{equation*}
E_r D_K E_s = \0, \quad \text{for all $r \in S_1$ and $s\in S_2$.}
\end{equation*}
\begin{proof}
Suppose there exists a class, $C_j$, of $P_{min}^K$ that has a partition $C_j=S_1 \cup S_2$ satisfying
\begin{equation*}
E_r D_K E_s = \0, \quad \text{for all $r \in S_1$ and $s\in S_2$.}
\end{equation*}
Let $W_1= \sum_{r\in S_1} E_r$ and $W_2=\sum_{s\in S_2}E_s$.  Then 
\begin{equation*}
W_1 D_K W_2=\0.
\end{equation*}
It follows from $F_j = W_1+W_2$ that
\begin{equation*}
(W_1+W_2) D_K = D_K(W_1+W_2).
\end{equation*}
Multiplying $W_1$ on the left gives
\begin{equation*}
W_1 D_K = W_1 D_K W_1.
\end{equation*}
If we write $W_1$ as
\begin{equation*}
\begin{bmatrix}
\wt{W_1} & B\\ B^T & C
\end{bmatrix},
\end{equation*}
then the above equation implies $B^T B = \0$ and $W_1 D_K=D_KW_1$.
As $P_{min}^K$ is the minimum partition commuting with $D_K$, we conclude that $W_1=\0$ and $S_1=\emptyset$.
\end{proof}
\end{lemma}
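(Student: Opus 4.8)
The plan is to argue by contradiction, playing off the minimality (finest-ness) of $P_{min}^K$ in the refinement order. Suppose some class $C_j$ admits a partition $C_j = S_1 \cup S_2$ with $S_1, S_2$ both nonempty and $E_r D_K E_s = \0$ for all $r \in S_1$ and $s \in S_2$. Set $W_1 = \sum_{r \in S_1} E_r$ and $W_2 = \sum_{s \in S_2} E_s$, so that $F_j = W_1 + W_2$. Since the $E_r$ are mutually orthogonal idempotents indexed by disjoint sets, $W_1^2 = W_1$, $W_2^2 = W_2$, and $W_1 W_2 = W_2 W_1 = \0$. Summing the hypothesis over $S_1 \times S_2$ gives $W_1 D_K W_2 = \0$. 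My goal is to show that $W_1$ by itself commutes with $D_K$: the partition obtained from $P_{min}^K$ by replacing $C_j$ with the two classes $S_1, S_2$ would then commute with $D_K$ and be a proper refinement of $P_{min}^K$, contradicting minimality.

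To produce this commutation I would begin from the relation $(W_1 + W_2) D_K = D_K (W_1 + W_2)$, which holds because $C_j$ is a class of the commuting partition and $F_j = W_1 + W_2$. Left-multiplying by $W_1$ and using $W_1^2 = W_1$, $W_1 W_2 = \0$ on the left and $W_1 D_K W_2 = \0$ on the right collapses everything to $W_1 D_K = W_1 D_K W_1$. Then I pass to blocks relative to the split of the coordinates into $K$ and $V(X) \setminus K$: writing $W_1 = \begin{bmatrix} \wt{W_1} & B \\ B^T & C \end{bmatrix}$ and $D_K = \begin{bmatrix} I_{|K|} & \0 \\ \0 & \0 \end{bmatrix}$, I expand both sides of $W_1 D_K = W_1 D_K W_1$ entrywise. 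Comparing the lower-right blocks yields $B^T B = \0$; since $A$ is real symmetric, its principal idempotents and hence $W_1$ and $B$ are real, so the vanishing of the Gram matrix $B^T B$ forces $B = \0$. Thus $W_1$ is block diagonal with respect to $(K, V(X) \setminus K)$ and therefore commutes with $D_K$, which is the contradiction sought.

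The step I expect to demand the most care is precisely the passage from $W_1 D_K = W_1 D_K W_1$ to the genuine commutation $W_1 D_K = D_K W_1$. One cannot read off commutation directly; the argument must extract the single scalar consequence $B^T B = \0$ from the lower-right block and then invoke the realness (equivalently, positive semidefiniteness) of $W_1$ to conclude $B = \0$. Everything else—the idempotent bookkeeping for $W_1, W_2$ and the reduction via left-multiplication—is routine linear algebra, so the realness input is the genuine crux of the proof.
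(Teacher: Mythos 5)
Your proof is correct and follows essentially the same route as the paper's: the same $W_1,W_2$, the same left-multiplication of $(W_1+W_2)D_K=D_K(W_1+W_2)$ yielding $W_1D_K=W_1D_KW_1$, and the same block computation extracting $B^TB=\0$ to force $W_1$ to commute with $D_K$, contradicting the minimality of $P_{min}^K$. You merely make explicit a couple of details the paper leaves implicit (that realness of $B$ turns $B^TB=\0$ into $B=\0$, and that $W_2=F_j-W_1$ then also commutes, so the split genuinely refines the partition).
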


\begin{theorem}
\label{thm:Ratio}
Let $\{\theta_0,...,\theta_d\}$ be the distinct eigenvalues of the graph. Given $K \subseteq V(X)$, recall the definitions of $P_{min}^K$ and $\Phi_K$. The following are equivalent.
\begin{enumerate}[(a)]
    \item 
    For all $(\theta_r,\theta_s), (\theta_h,\theta_k) \in \Phi_K$, with $\theta_h \neq \theta_k$, it follows that \label{thmratio:conditiona}
    \[\frac{\theta_r-\theta_s}{\theta_h-\theta_k} \in \Q.\]
    \item 
    \label{thmratio:conditionb}
    For any $C_{j_1},C_{j_2} \in P_{min}^K$, and for all $\theta_r,\theta_s \in C_{j_1}$ and all $\theta_h,\theta_k \in C_{j_2}$, with $\theta_h \neq \theta_k$, it follows that
    \[\frac{\theta_r-\theta_s}{\theta_h-\theta_k} \in \Q.\]
\end{enumerate}
\end{theorem}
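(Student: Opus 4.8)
The plan is to prove the two implications separately, with $(\ref{thmratio:conditionb}) \Rightarrow (\ref{thmratio:conditiona})$ being immediate and $(\ref{thmratio:conditiona}) \Rightarrow (\ref{thmratio:conditionb})$ carrying all the content. For $(\ref{thmratio:conditionb}) \Rightarrow (\ref{thmratio:conditiona})$ I would simply invoke Lemma~\ref{Lem:K-support}: any pair $(\theta_r,\theta_s) \in \Phi_K$ has $r$ and $s$ lying in a common class of $P_{min}^K$, and likewise $(\theta_h,\theta_k) \in \Phi_K$ forces $h,k$ into a common class. Condition~(\ref{thmratio:conditionb}) then applies verbatim to the two classes and yields rationality of the ratio.

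For the forward direction the key observation is that Lemma~\ref{Lem:MinClass} is precisely a connectivity statement for the support relation restricted to a single class. Concretely, I would form a graph on each class $C_j$ by joining distinct indices $r$ and $s$ whenever $E_r D_K E_s \neq \0$. Were this graph disconnected, its components would supply a partition $C_j = S_1 \cup S_2$ with $E_r D_K E_s = \0$ for all $r \in S_1$ and $s \in S_2$, contradicting Lemma~\ref{Lem:MinClass}. Hence any two indices of a common class are joined by a path each of whose edges is an off-diagonal element of $\Phi_K$.

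With connectivity available, I would fix $r,s \in C_{j_1}$ and $h,k \in C_{j_2}$ with $\theta_h \neq \theta_k$ and argue by telescoping. Choose a path $h = b_0, b_1, \ldots, b_q = k$ inside $C_{j_2}$ and set $D^\ast = \theta_{b_0} - \theta_{b_1}$; since this edge is an off-diagonal support pair, $D^\ast \neq 0$ and $(\theta_{b_0},\theta_{b_1}) \in \Phi_K$. Every edge $(\theta_{b_i},\theta_{b_{i+1}})$ of the path lies in $\Phi_K$ with distinct endpoints, so condition~(\ref{thmratio:conditiona}) gives $(\theta_{b_i} - \theta_{b_{i+1}})/D^\ast \in \Q$; summing telescopically yields $(\theta_h - \theta_k)/D^\ast \in \Q$, which is moreover nonzero. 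Running the identical argument along a path $r = a_0, \ldots, a_p = s$ in $C_{j_1}$ gives $(\theta_r - \theta_s)/D^\ast \in \Q$. Dividing these two rational quantities produces $(\theta_r - \theta_s)/(\theta_h - \theta_k) \in \Q$, as required.

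The main obstacle is the forward direction, and within it the conceptual step is recognizing Lemma~\ref{Lem:MinClass} as the connectivity fact that lets an arbitrary within-class eigenvalue difference be rewritten as a telescoping sum of support-pair differences; once that is seen, condition~(\ref{thmratio:conditiona}) makes all such differences commensurable with a single fixed reference $D^\ast$, and the conclusion is pure arithmetic. The only case needing a remark is the degenerate one where a class $C_{j_2}$ is a singleton: then there are no distinct $h,k$ with $\theta_h \neq \theta_k$ in it, so condition~(\ref{thmratio:conditionb}) is vacuous there and a reference $D^\ast$ is never needed.
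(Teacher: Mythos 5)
Your proposal is correct and follows essentially the same route as the paper: both directions rest on Lemma~\ref{Lem:K-support} for (\ref{thmratio:conditionb})$\Rightarrow$(\ref{thmratio:conditiona}) and on Lemma~\ref{Lem:MinClass}, read as a connectivity statement yielding within-class paths of support pairs, followed by telescoping for the converse. Your only departures are cosmetic --- making the connected-components argument explicit and telescoping against a single fixed reference difference $D^\ast$ rather than the paper's two-stage division --- and both are fine.
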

\begin{proof}
By Lemma~\ref{Lem:MinClass},
for distinct $r$ and $s$ in $C_{j_1}$,
there exist distinct elements $r_1, \ldots, r_{m_1}$ in $C_{j_1}$ such that $r_1=r$, $r_{m_1}=s$ and
\begin{equation*}
(\theta_{r_i}, \theta_{r_{i+1}})  \in \Phi_K, \quad \text{for $i=1\ldots,m_1-1$.}
\end{equation*}
Similarly, for distinct $h$ and $k$ in $C_{j_2}$,
there exist distinct elements $h_1, \ldots, h_{m_2}$ in $C_{j_2}$ such that $h_1=h$, $h_{m_2}=k$ and
\begin{equation*}
(\theta_{h_\ell}, \theta_{h_{\ell+1}})  \in \Phi_K, \quad \text{for $\ell=1\ldots,m_2-1$.}
\end{equation*}

Suppose condition~(\ref{thmratio:conditiona}) holds. Then, for $1\leq i \leq m_1-1$ and $1\leq \ell \leq m_2-1$,
\begin{equation*}
\frac{\theta_{h_\ell}-\theta_{h_{\ell+1}}}{\theta_{r_i}- \theta_{r_{i+1}}}\in \Q.
\end{equation*}
It follows that, for  $1\leq i \leq m_1-1$, both
\begin{equation*}
\frac{\theta_h-\theta_k}{\theta_{r_i}- \theta_{r_{i+1}}} = \frac{\sum_{\ell=1}^{m_2-1}(\theta_{h_\ell}- \theta_{h_{\ell+1}})}{\theta_{r_i}- \theta_{r_{i+1}}} 
\quad \text{and}\quad
\frac{\theta_{r_i}-\theta_{r_{i+1}}} {\theta_h-\theta_k}
\end{equation*}
are rational.
Consequently,
\begin{equation*}
\frac{\theta_r-\theta_s}{\theta_h- \theta_k} = \frac{\sum_{i=1}^{m_1-1}(\theta_{r_i}- \theta_{r_{i+1}})}{\theta_h- \theta_k} \in \Q,
\end{equation*}
and condition~(\ref{thmratio:conditionb}) holds.

Conversely, by Lemma~\ref{Lem:K-support} and Proposition~\ref{Prop:ZeroBlock}, condition~(\ref{thmratio:conditionb}) implies condition (\ref{thmratio:conditiona}).
\end{proof}
 
\begin{corollary}
\label{Cor:K-FRratio}
Suppose $X$ has integer weights.
If $K$-fractional revival  occurs in $X$ at time $\tau>0$ then there exists a square-free integer $\Delta$
such that $\theta_r-\theta_s$ is an integer multiple of $\sqrt{\Delta}$, for all $r$ and $s$ in the same class of $P_{min}^K$.
\end{corollary}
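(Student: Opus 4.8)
The plan is to bootstrap from Theorem~\ref{Thm:K-FReigenvalues}, which already supplies a square-free integer $\Delta$ controlling the differences $\theta_r-\theta_s$ for pairs $(\theta_r,\theta_s)\in\Phi_K$, and to promote this conclusion to all pairs lying in a common class of $P_{min}^K$. First I would invoke Theorem~\ref{Thm:K-FReigenvalues}: since $X$ has integer weights and $K$-fractional revival occurs at some $\tau>0$, there is a square-free integer $\Delta$ such that $\theta_r-\theta_s$ is an integer multiple of $\sqrt{\Delta}$ whenever $(\theta_r,\theta_s)\in\Phi_K$. This same $\Delta$ is the one claimed in the corollary; the only work left is to extend the conclusion from $\Phi_K$-pairs to arbitrary pairs within a single class of $P_{min}^K$.

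The key mechanism is the $\Phi_K$-connectivity of each class established in Lemma~\ref{Lem:MinClass}, exactly as exploited in the proof of Theorem~\ref{thm:Ratio}. Fix a class $C_j$ of $P_{min}^K$ and distinct indices $r,s\in C_j$. Lemma~\ref{Lem:MinClass} forbids any splitting of $C_j$ into two nonempty parts with no $\Phi_K$-pair between them, so the graph on $C_j$ whose edges are the pairs $(\theta_{r'},\theta_{s'})\in\Phi_K$ is connected. Hence there exist distinct indices $r_1,\dots,r_m\in C_j$ with $r_1=r$, $r_m=s$, and $(\theta_{r_i},\theta_{r_{i+1}})\in\Phi_K$ for every $i$.

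I would then finish by a telescoping argument. Writing
\[
\theta_r-\theta_s=\sum_{i=1}^{m-1}\bigl(\theta_{r_i}-\theta_{r_{i+1}}\bigr),
\]
each summand is an integer multiple of $\sqrt{\Delta}$ by the first step, so the sum is as well. Since $r$ and $s$ were an arbitrary pair in a common class (the case $r=s$ being trivial), this yields $\theta_r-\theta_s\in\Z\sqrt{\Delta}$ for all $r,s$ in the same class of $P_{min}^K$, as required.

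I do not expect a serious obstacle here: the corollary is essentially Theorem~\ref{Thm:K-FReigenvalues} combined with the within-class connectivity of Lemma~\ref{Lem:MinClass}. The one point that deserves care is confirming that the path supplied by Lemma~\ref{Lem:MinClass} stays inside the single class $C_j$, so that no eigenvalue differences from outside the class are introduced, and that its consecutive indices are genuinely distinct, so that each step is a bona fide $\Phi_K$-pair; both are guaranteed by the lemma as stated, so the telescoping remains a sum of integer multiples of $\sqrt{\Delta}$.
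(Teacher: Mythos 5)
Your proof is correct and follows essentially the same route as the paper: the paper's proof simply cites Theorem~\ref{Thm:K-FReigenvalues} together with Theorem~\ref{thm:Ratio}, whose own proof rests on precisely the $\Phi_K$-connectivity of each class supplied by Lemma~\ref{Lem:MinClass} and the telescoping sum you write out. If anything, your version is slightly more careful, since the bare statement of Theorem~\ref{thm:Ratio} only gives rationality of the ratios, whereas your direct telescoping along a $\Phi_K$-path inside the class delivers the integer-multiple-of-$\sqrt{\Delta}$ conclusion immediately.
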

\begin{proof}
It follows from Theorem~\ref{Thm:K-FReigenvalues} and Theorem~\ref{thm:Ratio}.
\end{proof}

%%%%%%%%%%%%%%%%%%%%%%%%%%%%%%%%%%%%%%%%%%%%%%%%%%%%%%%%%%%%%%%%%%%%%%%%%%%%%
%%%%%%%%%%%%%%%%%%%%%%%%%%%%%%%%%%%%%%%%%%%%%%%%%%%%%%%%%%%%%%%%%%%%%%%%%%%%%
\section{Block decomposition}
\label{Section:Block}

Suppose $K$-fractional revival occurs in $X$ at time $\tau$, then, up to permuting rows and columns,
\begin{equation*}
U(\tau)=
\begin{bmatrix}
\wt{U(\tau)} &\0\\ \0 & U'
\end{bmatrix},
\end{equation*}
for some $(n-|K|)\times (n-|K|)$ unitary matrix $U'$.

Let $\alg{A}$ denote the ring of polynomials in $A$ over $\C$.
Having a matrix in $\alg{A}$ with the desired block diagonal structure is a necessary condition of $K$-fractional revival in $X$.

\begin{definition}
\label{Def:Decomp}
We say $X$ is  \textsl{decomposable with respect to $K$} if $\alg{A}$ contains a non-identity block diagonal matrix
\begin{equation*}
\begin{bmatrix} H & \0 \\ \0 & H' \end{bmatrix}
\end{equation*} 
where the rows and columns of $H$ are indexed by the elements of $K$.
\end{definition}
When proper $K$-fractional revival occurs at time $\tau$, then  $\wt{U(\tau)}$ is not a diagonal matrix.
We say $X$ is \textsl{properly} decomposable with respect to $K$ if $\alg{A}$ contains a matrix in the above block structure where $H$ is not diagonal.

\begin{theorem}
\label{Thm:Decomp}
A connected graph $X$ is properly decomposable with respect to $K$ if and only if $P_{min}^K$ has more than one class and at least two of $\wt{F_j}$'s are non-zero.
%In this case, $P_{min}^K$ has at least two classes.
\end{theorem}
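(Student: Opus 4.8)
The plan is to first pin down exactly which matrices in $\alg{A}$ are block diagonal with respect to the splitting of $V(X)$ into $K$ and its complement, and then read off when one of them can have a non-diagonal $K$-block. Recall that every element of $\alg{A}$ has the form $M=\sum_{r=0}^d c_r E_r$ (the $\theta_r$ are distinct, so Lagrange interpolation realizes any tuple $(c_r)$), and that such an $M$ is block diagonal precisely when it commutes with $D_K$: writing $M$ in block form, $MD_K=D_KM$ forces both off-diagonal blocks to vanish. So the block-diagonal elements of $\alg{A}$ are exactly the polynomials in $A$ that commute with $D_K$, and their $K$-blocks are the matrices $\wt{M}$.

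I claim these are precisely the linear combinations $\sum_{j=1}^z d_j F_j$. One inclusion is immediate: each $F_j=\sum_{r\in C_j}E_r$ lies in $\alg{A}$ (again by interpolation) and commutes with $D_K$ because $P_{min}^K$ commutes with $D_K$, so every combination $\sum_j d_j F_j$ is a block-diagonal element of $\alg{A}$. The substance is the converse, and this is the step I expect to be the crux. Given a block-diagonal $M=\sum_r c_r E_r\in\alg{A}$, I would group the spectral idempotents of $M$ by the value of $c_r$: for each distinct value $v$, the projection $G_v=\sum_{r:\,c_r=v}E_r$ is a polynomial in $M$, hence commutes with $D_K$. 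Thus the partition $Q$ of $\{0,\dots,d\}$ whose classes are the level sets of $r\mapsto c_r$ commutes with $D_K$. By minimality of $P_{min}^K$ we get $P_{min}^K\le Q$, i.e.\ $c_r$ is constant on every class $C_j$, which is exactly the statement $M=\sum_j d_j F_j$. Restricting to $K$ then shows that the $K$-block of a general block-diagonal element of $\alg{A}$ is $\wt{M}=\sum_{j=1}^z d_j\wt{F_j}$.

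With this description the theorem becomes a statement about the $\wt{F_j}$. By definition $X$ is properly decomposable with respect to $K$ iff some block-diagonal element of $\alg{A}$ has a non-diagonal $K$-block, i.e.\ iff some combination $\sum_j d_j\wt{F_j}$ is non-diagonal. Since the zero--one combination that selects a single $\wt{F_j}$ is allowed, this happens iff at least one $\wt{F_j}$ is itself non-diagonal. It then remains to identify ``some $\wt{F_j}$ is non-diagonal'' with the stated condition. If at most one $\wt{F_j}$ is non-zero, then Equation~(\ref{Eqn:K-SumF}) forces that single non-zero block to equal $I_{|K|}$, which is diagonal, so no $\wt{F_j}$ is non-diagonal; conversely, if at least two of the $\wt{F_j}$ are non-zero, Proposition~\ref{Prop:Connected} (using that $X$ is connected) guarantees that every non-zero $\wt{F_j}$ is non-diagonal, so there certainly is a non-diagonal one. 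Hence proper decomposability is equivalent to having at least two non-zero $\wt{F_j}$'s; and since two non-zero blocks cannot occur unless $P_{min}^K$ has at least two classes, this matches the condition in the statement. The only genuinely delicate point is the minimality argument of the second paragraph; everything else is the block bookkeeping recorded in Equations~(\ref{Eqn:K-SumF})--(\ref{Eqn:K-FE}) together with Proposition~\ref{Prop:Connected}.
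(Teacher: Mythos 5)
Your proof is correct and follows essentially the same route as the paper's: the forward direction is Proposition~\ref{Prop:Connected} applied to some $F_j\in\alg{A}$ commuting with $D_K$, and the converse reduces to showing that every block-diagonal element of $\alg{A}$ has its $K$-block in $\spn\{\wt{F_1},\ldots,\wt{F_z}\}$, which forces it into $\spn\{I_{|K|}\}$ when only one $\wt{F_j}$ is non-zero. Your second paragraph (Lagrange interpolation on the level sets of the coefficients, giving a partition commuting with $D_K$, then minimality of $P_{min}^K$) is exactly the right justification for that last step, which the paper asserts without spelling out.
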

\begin{proof}
Suppose $X$ is connected with at least two non-zero $\wt{F_j}$'s. 
It follows from Proposition~\ref{Prop:Connected} that, without loss of generality, $\wt{F_1}$ is not a diagonal matrix.  As $F_1$ is a matrix in $\alg{A}$ that commutes with $D_K$, we conclude that 
$X$ is properly decomposable with respect to $K$.

Conversely, by Equation~(\ref{Eqn:K-SumF}), we can assume without loss of generality that $\wt{F_1}=I_{|K|}$ and all other $\wt{F_j}$'s are the zero matrix.
Then each block diagonal matrix  
\begin{equation*}
\begin{bmatrix} H & \0 \\ \0 & H' \end{bmatrix} 
\end{equation*}
in $\alg{A}$ has $H \in \spn\{I_{|K|}\}$, so $X$ is not properly diagonalizable.
\end{proof}

\begin{proposition}
\label{Prop:Decomp}
Let $H$ be a $|K| \times |K|$ matrix and $H'$ be an $(n-|K|)\times (n-|K|)$ matrix. Then the following are equivalent.
\begin{enumerate}[i.]
\item
\label{Cond:Decompi}
The block diagonal matrix
\begin{equation*}
\begin{bmatrix} H & \0 \\ \0 & H' \end{bmatrix} \in \alg{A}.
\end{equation*}
\item
\label{Cond:Decompii}
For any eigenvector $v$ of $A$, there exists a real number $\lambda$ such that  $H \tp{v} = \lambda \tp{v}$ and
$H' \bttm{v} = \lambda \bttm{v}$.  (At most one of $\tp{v}$ and $\bttm{v}$ can be the zero vector.)
\end{enumerate}
\end{proposition}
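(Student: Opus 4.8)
The plan is to route both implications through the spectral description of the algebra $\alg{A}$. Since $A$ is real symmetric it is diagonalizable, and Lagrange interpolation (exactly as used in the proof of Theorem~\ref{Thm:Char1}) gives $\alg{A}=\spn\{E_0,\ldots,E_d\}$; concretely $p(A)=\sum_{r=0}^d p(\theta_r)E_r$ for every polynomial $p$. Hence a matrix $M$ lies in $\alg{A}$ if and only if it can be written $M=\sum_{r=0}^d c_r E_r$ for scalars $c_r$, equivalently if and only if $M$ acts as a single scalar on each eigenspace $\im E_r$. I would prove the equivalence by reading each condition against this characterization, taking $M=\begin{bmatrix} H & \0\\ \0 & H'\end{bmatrix}$ throughout.

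For (\ref{Cond:Decompi})$\Rightarrow$(\ref{Cond:Decompii}) I would write $M=\sum_{r=0}^d c_r E_r$. If $v$ is an eigenvector of $A$ with $Av=\theta_r v$, then $E_s v=\delta_{r,s}v$, so $Mv=c_r v$. Splitting $v$ into its $K$-part and its $(V(X)\backslash K)$-part and reading off the two diagonal blocks gives $H\tp v=c_r\tp v$ and $H'\bttm v=c_r\bttm v$, so $\lambda:=c_r$ is the desired common scalar; the parenthetical remark is merely the observation that $v\neq\0$, so $\tp v$ and $\bttm v$ are not both zero.

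For (\ref{Cond:Decompii})$\Rightarrow$(\ref{Cond:Decompi}) I would read (\ref{Cond:Decompii}) as asserting that every eigenvector $v$ of $A$ is an eigenvector of $M$, with one and the same scalar $\lambda$ acting on $\tp v$ and on $\bttm v$; that is, $Mv=\lambda v$. Fixing $r$ and letting $W=\im E_r$, every nonzero vector of $W$ is an eigenvector of $A$ for $\theta_r$, hence an eigenvector of $M$. The key elementary fact is that an operator all of whose nonzero vectors in a subspace $W$ are eigenvectors must act as a single scalar on $W$: if independent $u,w\in W$ had $Mu=\mu u$ and $Mw=\nu w$, then $M(u+w)=\mu u+\nu w$ would be forced to be a multiple of $u+w$, giving $\mu=\nu$. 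Applying this to each $W=\im E_r$ yields $ME_r=c_r E_r$, and summing over $r$ (using $\sum_r E_r=I_n$) gives $M=\sum_r c_r E_r\in\alg{A}$.

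The only genuine step is this upgrade in the reverse direction from ``every vector is an eigenvector'' to ``a constant scalar across the whole eigenspace''; this is precisely where the linear-independence argument is needed, and it also explains why (\ref{Cond:Decompii}) insists on a common $\lambda$ for the two blocks rather than separate eigenvalues for $H$ and $H'$. Everything else is bookkeeping with the idempotents $E_r$. A minor point to reconcile is the word ``real'' in (\ref{Cond:Decompii}): the scalar is $\lambda=c_r=p(\theta_r)$, which need not be real (e.g.\ $M=U(\tau)$ yields $\lambda=e^{-\ii\tau\theta_r}$), so I would read $\lambda$ simply as a scalar, the argument being insensitive to this distinction.
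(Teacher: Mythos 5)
Your proof is correct and follows essentially the same route as the paper: both directions reduce to showing that the block matrix acts as a single scalar on each eigenspace of $A$, hence equals $\sum_{r}\lambda_r E_r\in\alg{A}$. Your reverse direction is slightly cleaner in its bookkeeping --- you apply the ``every nonzero vector of $\im E_r$ is an eigenvector, hence the operator is a scalar there'' argument to the full block matrix at once, whereas the paper handles the two blocks separately and constructs explicit projections $M_j$, $N_j$ --- and your observation that the word ``real'' in (ii) should be read as ``scalar'' (since $\alg{A}$ is taken over $\C$) is a fair catch.
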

\begin{proof}
Since the eigenvectors of $A$ are eigenvectors of every matrix in $\alg{A}$, 
(\ref{Cond:Decompi}) implies (\ref{Cond:Decompii}).

Conversely,  we assume (\ref{Cond:Decompii}) holds.  
Let $u$ and $v$ eigenvectors of $A$ in the same eigenspace.   If both $\tp{u}$ and $\tp{v}$ are non-zero, then any non-zero linear combination of $\tp{u}$ and $\tp{v}$
is an eigenvector of $H$.  We conclude that $\tp{u}$ and $\tp{v}$ belong to the same eigenspace of $H$.
Similarly, if both $\bttm{u}$ and $\bttm{v}$ are non-zero, then they belong to the same eigenspace of $H'$.

Given the spectral decomposition $A=\sum_{r=0}^d \theta_r E_r$,  we define $\lambda_r$, for $r=0,\ldots, d$, to be the real number satisfying
\begin{equation*}
H \tp{(E_r e_a)} = \lambda_r \tp{(E_r e_a)}
\quad \text{and} \quad 
H' \bttm{(E_r e_a)} = \lambda_r \bttm{(E_r e_a)}, 
\quad \forall a\in V(X).
\end{equation*}
Suppose the list $\lambda_0, \lambda_1, \ldots, \lambda_d$ has $z$ distinct values $\mu_1, \ldots, \mu_z$.
For $j=1,\ldots, z$,  let $M_j$ be the $|K|\times |K|$ orthogonal projection matrix onto the space
\begin{equation*}
\spn \left\{ \tp{(E_r e_a)} : a\in V(X) \ \text{and}\ \lambda_r=\mu_j \right\},
\end{equation*}
and  let $N_j$ be the $(n-|K|) \times (n-|K|)$ orthogonal projection matrix onto the space
\begin{equation*}
\spn \left\{ \bttm{(E_r e_a)} : a\in V(X) \ \text{and}\ \lambda_r=\mu_j \right\}.
\end{equation*}
(Note that some of these $M_j$'s and $N_j$'s could be the zero matrix.)
We have
\begin{equation*}
H M_j = \mu_j M_j
\quad \text{and} \quad
H' N_j = \mu_j N_j.
\end{equation*}
For $j=1,\ldots,z$, we have
\begin{equation*}
\begin{bmatrix} M_j & \0 \\ \0 & N_j \end{bmatrix} E_r
=
\begin{cases}
E_r & \text{if $\lambda_r=\mu_j$,}\\
\0 & \text{otherwise.}
\end{cases}
\end{equation*}
For $r=0,\ldots, d$, if $\lambda_r=\mu_j$ then
\begin{eqnarray*}
\begin{bmatrix} H & \0 \\ \0 & H' \end{bmatrix} E_r
&=&\begin{bmatrix} H & \0 \\ \0 & H' \end{bmatrix}  \begin{bmatrix} M_j & \0 \\ \0 & N_j \end{bmatrix} E_r\\
&=& \lambda_r \begin{bmatrix} M_j & \0 \\ \0 & N_j \end{bmatrix} E_r\\
&=& \lambda_r E_r.
\end{eqnarray*}
Consequently,
\begin{equation*}
\begin{bmatrix} H & \0 \\ \0 & H' \end{bmatrix}  = \sum_{r=0}^d \lambda_r E_r \in \alg{A}.
\end{equation*}
\end{proof}

If $K$-fractional revival occurs in $X$ at time $\tau$, then $U(\tau)$ is a block diagonal matrix in $\alg{A}$
that commutes with all non-negative powers of $A$.   Thus, $\wt{U(\tau)}$ is a symmetric
unitary matrix commuting with $\wt{A^k}$, for $k\geq 0$. 
We give a characterization of symmetric unitary matrices that commute with $\wt{A^k}$, for $k\geq 0$,
which is used in Section~\ref{Section:FC}.

\begin{lemma}
\label{Lem:Unitary}
A symmetric unitary matrix $H$ has an orthonormal basis of real eigenvectors.
\end{lemma}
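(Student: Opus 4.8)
The plan is to separate $H$ into its real and imaginary parts and exploit the interplay between symmetry and unitarity. Write $H = S + \ii T$, where $S = \tfrac12(H + \cj H)$ and $T = \tfrac1{2\ii}(H - \cj H)$ are real matrices. Since $H$ is symmetric, taking transposes shows that $S$ and $T$ are each real symmetric. The goal is then to diagonalize $S$ and $T$ simultaneously by a single real orthogonal matrix, whose columns will furnish the desired real orthonormal eigenvectors of $H$.

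The key step is to extract the right relations from unitarity. Because $H$ is symmetric, its conjugate transpose satisfies $H^* = \cj{H^T} = \cj H$, so the unitarity condition $H^* H = I$ becomes $\cj H\, H = I$. Substituting $\cj H = S - \ii T$ and $H = S + \ii T$ and expanding gives $(S^2 + T^2) + \ii(ST - TS) = I$. Comparing real and imaginary parts, which is legitimate since $S$ and $T$ are real, yields $S^2 + T^2 = I$ and, crucially, $ST = TS$.

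It remains to invoke the standard fact that two commuting real symmetric matrices are simultaneously orthogonally diagonalizable: there exists a real orthogonal matrix $Q$ such that $Q^T S Q$ and $Q^T T Q$ are both diagonal. Then $Q^T H Q = Q^T S Q + \ii\, Q^T T Q$ is diagonal, so the columns of $Q$ constitute an orthonormal basis of real eigenvectors of $H$, as claimed. The only real content of the argument is the observation that symmetry converts unitarity into the commuting relation $ST = TS$; once this is in hand the conclusion is immediate from simultaneous diagonalization, so I do not anticipate a genuine obstacle beyond correctly bookkeeping the real and imaginary parts.
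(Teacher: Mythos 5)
Your proof is correct and follows essentially the same route as the paper: decompose $H$ into real and imaginary parts, use symmetry plus unitarity to deduce that the two real symmetric parts commute, and then simultaneously orthogonally diagonalize them. The only difference is that you spell out the formulas for the real and imaginary parts and the expansion of $H^*H$ slightly more explicitly.
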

\begin{proof}
Since $H$ is symmetric, there exist real symmetric matrices $A$ and $B$ such that
\begin{equation*}
H=A+ \ii B.
\end{equation*}
Now $H^* = A-\ii B$ and
\begin{equation*}
I=H^* H = (A^2+B^2) + \ii (AB - BA)
\end{equation*}
implies $AB=BA$.  Hence $A$ and $B$ are diagonalizable by the same set of real orthonormal eigenvectors,
which are also eigenvectors of $H$.
\end{proof}

\begin{proposition} 
\label{Prop:Commute}
%Let $A$ be the adjacency matrix of a graph $X$ on $n$ vertices with real weights and let $K\subset V(X)$. 
Let $H$ be a $|K|\times |K|$ symmetric unitary matrix. The following statements are equivalent.
\begin{enumerate}[i.]
\item
\label{Cond:Commute1}
$H$ commutes with $\wt{A^k}$, for all $k\geq 0$.
\item
\label{Cond:Commute2}
$H$ commutes with $\wt{E_r}$, for each principal idempotent $E_r$ of $A$.
\item
\label{Cond:Commute3}
$A$ has  an orthogonormal basis of real eigenvectors $v_1, \ldots, v_n$ such that, for $j=1,\ldots,n$, either $\tp{v_j}=\0$ or it is an eigenvector of $H$.
\end{enumerate}
\end{proposition}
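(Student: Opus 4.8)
The plan is to prove the equivalences by treating $(\ref{Cond:Commute1})\Leftrightarrow(\ref{Cond:Commute2})$ as a formal consequence of the spectral decomposition and Lagrange interpolation, and to put the real content into $(\ref{Cond:Commute2})\Leftrightarrow(\ref{Cond:Commute3})$. For $(\ref{Cond:Commute2})\Rightarrow(\ref{Cond:Commute1})$, I would restrict the identity $A^k=\sum_r\theta_r^k E_r$ to the rows and columns indexed by $K$; since taking the $K$-submatrix is linear, $\wt{A^k}=\sum_r\theta_r^k\wt{E_r}$, so commuting with every $\wt{E_r}$ forces commuting with every $\wt{A^k}$. For the converse, Lagrange interpolation gives a polynomial $p_r$ with $E_r=p_r(A)=\sum_k c_{r,k}A^k$, and restricting yields $\wt{E_r}=\sum_k c_{r,k}\wt{A^k}$, a linear combination of the $\wt{A^k}$; hence commuting with all $\wt{A^k}$ forces commuting with each $\wt{E_r}$.

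For $(\ref{Cond:Commute3})\Rightarrow(\ref{Cond:Commute2})$, I would write $E_r=\sum_{v_j}v_jv_j^T$ in the given real orthonormal eigenbasis, the sum taken over $v_j$ in the $\theta_r$-eigenspace, and restrict to $K$ to get $\wt{E_r}=\sum_j\tp{v_j}\,\tp{v_j}^T$. Each rank-one summand commutes with $H$: if $\tp{v_j}$ is an eigenvector of $H$, then, using that $H$ is symmetric, both $H\tp{v_j}\tp{v_j}^T$ and $\tp{v_j}\tp{v_j}^TH$ equal the same scalar multiple of $\tp{v_j}\tp{v_j}^T$; and if $\tp{v_j}=\0$ the summand vanishes. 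Thus $H$ commutes with every $\wt{E_r}$.

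The substantive direction is $(\ref{Cond:Commute2})\Rightarrow(\ref{Cond:Commute3})$, which I would handle one eigenspace at a time via a singular value decomposition. Fix $\theta_r$ with eigenspace $\mathcal{V}_r=\im E_r$ and let $\pi_r\colon\mathcal{V}_r\to\R^K$ be the restriction $v\mapsto\tp{v}$; a direct check shows its adjoint $\pi_r^*$ zero-pads a vector of $\R^K$ to $\R^V$ and then applies $E_r$, so that $\pi_r\pi_r^*=\wt{E_r}$ on $\R^K$ while $\pi_r^*\pi_r=E_rD_KE_r$ on $\mathcal{V}_r$. By Lemma~\ref{Lem:Unitary}, $H=\sum_i\lambda_i M_i$ with real orthogonal projections $M_i$ onto its mutually orthogonal real eigenspaces $U_i\subseteq\R^K$; since each $M_i$ is a polynomial in $H$, it commutes with $\wt{E_r}$, so $H$ preserves each eigenspace of the real symmetric matrix $\wt{E_r}$. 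For each positive eigenvalue $\sigma^2$ of $\wt{E_r}$ with real eigenspace $W_\sigma$, the $M_i$ split $W_\sigma=\bigoplus_i(W_\sigma\cap U_i)$, and choosing a real orthonormal basis in each piece produces an orthonormal basis $\{y_\alpha\}$ of $\im\wt{E_r}$ whose members are common eigenvectors of $\wt{E_r}$ (eigenvalue $\sigma_\alpha^2>0$) and of $H$. Setting $v_\alpha=\sigma_\alpha^{-1}\pi_r^*y_\alpha\in\mathcal{V}_r$, the identities $\pi_r\pi_r^*=\wt{E_r}$ and $\pi_r^*\pi_r=E_rD_KE_r$ give $\langle v_\alpha,v_\beta\rangle=\delta_{\alpha\beta}$ and $\tp{v_\alpha}=\pi_r v_\alpha=\sigma_\alpha y_\alpha$, a nonzero real multiple of $y_\alpha$ and hence an eigenvector of $H$. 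The $v_\alpha$ form a real orthonormal basis of $(\ker\pi_r)^\perp$; completing with any real orthonormal basis of $\ker\pi_r$, whose members have $\tp{\cdot}=\0$, gives a real orthonormal basis of $\mathcal{V}_r$ of the required type. Taking the union over all $r$, using that the $\mathcal{V}_r$ are mutually orthogonal, yields the eigenbasis of $A$.

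I expect the obstacle to be precisely in $(\ref{Cond:Commute2})\Rightarrow(\ref{Cond:Commute3})$: the restriction map $\pi_r$ is not an isometry, so one cannot simply pull the $H$-eigenspace decomposition of $\R^K$ back to an orthogonal decomposition of $\mathcal{V}_r$. Routing the argument through the singular value decomposition of $\pi_r$—relating $\wt{E_r}=\pi_r\pi_r^*$, on which $H$ acts, to $E_rD_KE_r=\pi_r^*\pi_r$ on $\mathcal{V}_r$—is what repairs the inner-product distortion, because the singular-vector correspondence preserves orthonormality and carries $H$-eigenvectors to scalar multiples of themselves.
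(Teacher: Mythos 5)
Your proof is correct. The equivalences $(\ref{Cond:Commute1})\Leftrightarrow(\ref{Cond:Commute2})$ and $(\ref{Cond:Commute3})\Rightarrow(\ref{Cond:Commute2})$ are handled exactly as in the paper, but for the substantive direction $(\ref{Cond:Commute2})\Rightarrow(\ref{Cond:Commute3})$ you take a genuinely different route. The paper argues globally: invoking Lemma~\ref{Lem:Unitary} to write $H=\sum_h\lambda_h M_h$ with real eigenprojections $M_h$, it forms, for each $h$, the subspace $V_h$ spanned by the vectors obtained by zero-padding $M_he_a$ (for $a\in K$) and applying the various $E_r$; it shows $V_h\perp V_l$ for $h\neq l$ using $M_lM_h=\0$ and $M_h\wt{E_r}=\wt{E_r}M_h$, chooses real orthonormal bases of the $V_h$, extends to a basis of $\C^{V(X)}$, and checks that any vector orthogonal to all the $V_h$ has zero restriction to $K$. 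You instead work one $A$-eigenspace at a time through the singular value decomposition of the restriction map $\pi_r$, using $\pi_r\pi_r^*=\wt{E_r}$ and $\pi_r^*\pi_r=E_rD_KE_r$ to transport a common real eigenbasis of $\wt{E_r}$ and $H$ (which exists because the $M_i$ commute with $\wt{E_r}$) back to an orthonormal set in $\im E_r$. The two constructions yield essentially the same decomposition --- your vectors, grouped by the $H$-eigenvalue of their restrictions, span the paper's $V_h\cap\im E_r$ --- but your version buys an explicit justification of a point the paper leaves implicit: for the chosen basis to consist of eigenvectors of $A$, the paper's step of choosing ``a real orthonormal basis of $V_h$'' and extending must be done compatibly with the $A$-invariant splitting of each $V_h$ across the eigenspaces of $A$, which your eigenspace-by-eigenspace construction enforces automatically, and the scaling by the singular values $\sigma_\alpha$ cleanly repairs the failure of $\pi_r$ to be an isometry.
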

\begin{proof}
The statements (\ref{Cond:Commute1}) and (\ref{Cond:Commute2}) are equivalent because $A^k$ is a linear combination of the $E_r$'s,
and each $E_r$ is a polynomial in $A$.

Statement~(\ref{Cond:Commute3}) implies $H$ commutes with $\tp{v_j}\tp{v_j}^T$, for $j=1,\ldots,n$.
For each $r$,  we can write
\begin{equation*}
E_r = \sum_{j : Av_j=\theta_r v_j} v_j v_j^T
\end{equation*}
and 
\begin{equation*}
\wt{E_r} = \sum_{j : Av_j=\theta_r v_j} \tp{v_j} \tp{v_j}^T.
\end{equation*}
As a result, $H$ commutes with $\wt{E_r}$, for all $r$ and (\ref{Cond:Commute2}) holds.

Assume Statement~(\ref{Cond:Commute2}) holds.  Let $H =\sum_{h=1}^z \lambda_h M_h$ be the spectral decomposition of $H$.
By Lemma~\ref{Lem:Unitary}, each $M_h$ is real and symmetric.   Since $M_h$ is a polynomial in $H$, it commutes with $\wt{E_r}$ for all $r$.

For $h=1,\ldots, z$, let $V_h$ be the subspace of $\C^{V(X)}$ spanned by
\begin{equation*}
\left\{ E_r \begin{bmatrix} M_h e_a\\ \0 \end{bmatrix} :  a \in K, r=0,1\ldots, d \right\}.
\end{equation*}
For $h\neq l$, 
\begin{equation*}
\langle E_r \begin{bmatrix} M_l e_a\\ \0 \end{bmatrix}, E_r \begin{bmatrix} M_h e_b\\ \0 \end{bmatrix} \rangle = e_a^T M_l \wt{E_r} M_h e_b
\end{equation*} 
which is equal to zero since $M_h$ commutes with $\wt{E_r}$ and $M_lM_h=\0$.
We see that $V_h$ is orthogonal to $V_l$ whenever $h\neq l$.

We choose a real orthonormal basis of $V_h$, for each $h$, and extend from the union of these bases to a real orthonormal basis $v_1, \ldots, v_n$ of $\C^{V(X)}$.

If $v_j \in V_h$ and $\tp{v_j} \neq \0$ then 
\begin{equation*}
\tp{v_j} \in \spn \left\{ \wt{E_r}M_h e_a : a \in K, r=0,1\ldots, d \right\}.
\end{equation*}
As $\wt{E_r} M_h = M_h\wt{E_r}$, for all $r$, $\tp{v_j}$ lies in the column space of $M_h$ and is  an eigenvector of $H$.

Suppose $v_j$ does not belong to $V_h$, for any $h$.  Then
\begin{equation*}
v_j^T E_r \begin{bmatrix} M_h\\ \0 \end{bmatrix}  = \0
\end{equation*}
for all $r$ and $h$.   In particular, 
\begin{equation*}
\0=v_j^T \left(\sum_{r=0}^d \sum_{h=1}^z E_r \begin{bmatrix} M_h\\ \0 \end{bmatrix}\right) = v_j^T I_n \begin{bmatrix} I_{|K|} \\ \0\end{bmatrix} =  \begin{bmatrix} \tp{v_j} \\ \0\end{bmatrix} .
\end{equation*}
In this case, $\tp{v_j}=\0$.
We conclude that (\ref{Cond:Commute2}) implies (\ref{Cond:Commute3}).
\end{proof}

%%%%%%%%%%%%%%%%%%%%%%%%%%%%%%%%%%%%%%%%%%%%%%%%%%%%%%%%%%%%%%%%%%%%%%%%%%%%%
%%%%%%%%%%%%%%%%%%%%%%%%%%%%%%%%%%%%%%%%%%%%%%%%%%%%%%%%%%%%%%%%%%%%%%%%%%%%%
\section{Fractionally cospectral vertices}
\label{Section:FC}

For the rest of this note, we focus on the decomposability of $X$ with respect to $K=\{a,b\}$ and revisit fractional revival between two vertices.
We introduce the notions of fractional cospectrality and strongly fractional cospectrality of two vertices,
which can be viewed as a generalization of cospectrality and strongly cospectrality, respectively.

Suppose $X$ is a connected graph with real weights that is properly decomposable with respect to $K=\{a,b\}$. Let
$P_{min}^K=\{C_1, \ldots, C_z\}$, and
\begin{equation*}
F_j=\sum_{r\in C_j} E_r, \quad \text{for $j=1,\ldots,z$.} 
\end{equation*}
By Theorem~\ref{thm:inequalityandparallel}, Theorem~\ref{Thm:Decomp} and Proposition~\ref{Prop:Connected}, $z\geq 2$, $\wt{F_j} = 0$ for all but two indices, and exactly two of the $\wt{F_j}$'s, say $\wt{F_1}$ and $\wt{F_2}$, are $2\times 2$ rank one non-diagonal matrices which are projections onto orthogonal subspaces.

Hence we assume, without loss of generality, that
\begin{equation*}
\wt{F_1} = \begin{bmatrix} p \\ q \end{bmatrix}\begin{bmatrix} p & q \end{bmatrix}
\quad \text{and} \quad
\wt{F_2} = \begin{bmatrix} -q \\ p \end{bmatrix}\begin{bmatrix} -q & p \end{bmatrix},
\end{equation*}
for some non-zero real numbers $p$ and $q$ satisfying $p^2+q^2=1$.

\begin{theorem}
\label{Thm:FC}
Let $X$ be a graph with real weights.
If $X$ is properly decomposable with respect to $K=\{a,b\}$ then the $\wt{E_r}$'s commute with each other.
\end{theorem}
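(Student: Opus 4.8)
The plan is to show that each $\wt{E_r}$ is either the zero matrix or a non-negative scalar multiple of one of the two non-zero blocks $\wt{F_1},\wt{F_2}$, and then to exploit the fact that $\wt{F_1}$ and $\wt{F_2}$ are mutually orthogonal projections. Commutativity of the $\wt{E_r}$'s then drops out almost immediately.

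First I would record the two structural facts that drive everything. Since $A$ is real symmetric, each principal idempotent $E_r$ is real symmetric and positive semidefinite, so its principal submatrix $\wt{E_r}$ is a real symmetric positive semidefinite $2\times 2$ matrix. Moreover, if $r$ lies in a class $C_j$ with $j\notin\{1,2\}$, then $\wt{F_j}=\0$, and Proposition~\ref{Prop:ZeroBlock} forces $\wt{E_r}=\0$; these blocks commute trivially with everything, so it suffices to treat $r\in C_1$ and $r\in C_2$.

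Next, fix $r\in C_1$. Equation~(\ref{Eqn:K-FE}) gives $\wt{F_1}\,\wt{E_r}=\wt{E_r}$, so the column space of $\wt{E_r}$ is contained in the range of $\wt{F_1}$, namely the line $\spn\{(p,q)^T\}$. The crucial small lemma is that a real symmetric $2\times 2$ matrix whose column space lies in a single line through the origin must be a scalar multiple of the orthogonal projection onto that line: writing $\wt{E_r}=w v^T$ with $w=(p,q)^T$ and imposing symmetry forces $v$ to be parallel to $w$. Hence $\wt{E_r}=c_r\wt{F_1}$ with $c_r\geq 0$ (non-negativity from positive semidefiniteness). The identical argument with $\wt{F_2}$ yields $\wt{E_s}=c_s\wt{F_2}$ for every $s\in C_2$. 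This is the step I expect to be the main obstacle — not because it is deep, but because it is the only place requiring genuine argument; the elementary claim pinning a symmetric matrix to a scalar multiple once its column space is one-dimensional is what must be stated with care, and everything else is bookkeeping.

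Finally, by the standing hypothesis preceding the theorem $\wt{F_1}$ and $\wt{F_2}$ are rank-one orthogonal projections onto orthogonal subspaces, so together with $\wt{F_i}^2=\wt{F_i}$ from Equation~(\ref{Eqn:K-Idem}) and $\wt{F_1}+\wt{F_2}=I_2$ from Equation~(\ref{Eqn:K-SumF}) they satisfy $\wt{F_1}\wt{F_2}=\wt{F_2}\wt{F_1}=\0$. Since every non-zero $\wt{E_r}$ is a scalar multiple of one of these two blocks, any product $\wt{E_r}\wt{E_s}$ equals a scalar multiple of some $\wt{F_i}\wt{F_j}$, which is $\wt{F_i}$ when $i=j$ and $\0$ when $i\neq j$; in both cases $\wt{F_i}\wt{F_j}=\wt{F_j}\wt{F_i}$, so $\wt{E_r}\wt{E_s}=\wt{E_s}\wt{E_r}$. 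Thus the $\wt{E_r}$'s commute pairwise, as required.
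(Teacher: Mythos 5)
Your proposal is correct and follows essentially the same route as the paper: reduce to $r\in C_1\cup C_2$ via Proposition~\ref{Prop:ZeroBlock}, show each non-zero $\wt{E_r}$ is a scalar multiple of $\wt{F_1}$ or $\wt{F_2}$ by combining the one-dimensional column space with symmetry, and conclude from $\wt{F_1}\wt{F_2}=\wt{F_2}\wt{F_1}$. The only cosmetic difference is that you justify the column-space containment explicitly via Equation~(\ref{Eqn:K-FE}) and phrase the symmetry step as a small lemma, where the paper does the same computation with explicit entries $\alpha,\alpha'$.
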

\begin{proof}
We see in Proposition~\ref{Prop:ZeroBlock} that 
$\wt{E_r} =\0$ if  and only if $r \not \in C_1 \cup C_2$.

For $r \in C_1$, the column space of $\wt{E_r}$ is a subspace of $\spn\left\{ \begin{bmatrix} p \\ q \end{bmatrix}\right\}$.
Then there exist $\alpha, \alpha' \in \R$ such that
\begin{equation*}
\wt{E_r} = \begin{bmatrix} \alpha p & \alpha' p\\ \alpha q &\alpha'q\end{bmatrix}.
\end{equation*}
Since $\wt{E_r}$ is symmetric, we have $\alpha'=\frac{\alpha q}{p}$ and $\wt{E_r} = \frac{\alpha}{p} \wt{F_1}$.
Similarly, if $r\in C_2$, then
$\wt{E_r} = \beta \wt{F_2}$, for some $\beta \in \R$.

If follows from $\wt{F_1}\wt{F_2}=\wt{F_2}\wt{F_1}$ that the $\wt{E_r}$'s commute with each other.
\end{proof}

Perfect state transfer occurs only between strongly cospectral vertices, equivalently vertices that are  cospectral and parallel \cite{Godsil2017}.
Two vertices $a$ and $b$ are \textsl{cospectral} if and only if 
\begin{equation*}
(E_r)_{a,a} = (E_r)_{b,b}, \quad \text{for $r=0,1,\ldots, d$.}
\end{equation*}
Thus, for each $r=0,\ldots, d$, there exists real numbers $\alpha_r$ and $\beta_r$ such that
\begin{equation*}
\wt{E_r} = \begin{bmatrix} \alpha_r & \beta_r\\ \beta_r & \alpha_r\end{bmatrix}.
\end{equation*}
In this case, the $\wt{E_r}$'s commute with each other.   

Motivated by Theorem~\ref{Thm:FC} and the above necessary conditions on cospectral vertices, 
we now generalize the notion of cospectrality to fractionally cospectrality.
\begin{definition}
\label{Def:FC}
Given a connected  graph $X$ with real weights and the spectral decomposition, $A=\sum_{r=0}^d \theta_r E_r$, of $A$.  
Let $K=\{a,b\} \subset V(X)$.
We say $a$ is \textsl{fractionally cospectral} to $b$ in $X$ if
\begin{equation*}
\wt{E_j} \wt{E_h} = \wt{E_h} \wt{E_j}, 
\quad \text{for $j, h = 0,\ldots, d$.}
\end{equation*}
\end{definition}

Theorem~3.1 of \cite{Godsil2017} gives six equivalent statements to two vertices being cospectral.
We now give the analogous characterizations for fractionally cospectral vertices in a connected graph.

For a vertex $a$ in $X$, the \textsl{walk matrix} $M_a$ relative to the $a$ is defined to be
\begin{equation*}
M_a =\begin{bmatrix} e_a & Ae_a & A^2 e_a & \cdots & A^{n-1}e_a\end{bmatrix}.
\end{equation*}
Then $(M_a^T M_b)_{j,h} = e_a^T A^{j+h-2} e_b$  counts to number of walks of length $(j+h-2)$ from $a$ to $b$,
The \textsl{walk generating function from $a$ to $b$} is the formal power series
\begin{equation}
\label{Eqn:WalkGenA}
W_{a,b}(X,y) = \sum_{k\geq 0} (e_a^T A^ke_b )y^k = e_a^T(I-yA)^{-1}e_b = \sum_{r=0}^d \frac{(E_r)_{a,b}}{1-\theta_r y}.
\end{equation}
We use $\phi(X,y)$ to denote the characteristic polynomial of the adjacency matrix of $X$.  From Pages~30 and 52 of \cite{GodsilAC}, we see that
\begin{equation}
\label{Eqn:WalkGen}
W_{a,a}(X,y)=\sum_{r=0}^d \frac{(E_r)_{a,a}}{1-y\theta_r}=y^{-1}\frac{\phi(X\backslash\{a\},y^{-1})}{\phi(X,y^{-1})}.
\end{equation}

\begin{theorem}
\label{Thm:FCequiv}
Let $X$ be a connected  weighted graph and let $A=\sum_{r=0}^d \theta_rE_r$ be the spectral decomposition of $A$.
For vertices $a$  and $b$ in $X$,
the following statements are equivalent.
\begin{enumerate}[i.]
\item
\label{Cond:FCequiv1}
$a$ is fractionally cospectral to $b$.
\item
\label{Cond:FCequiv2}
There exists an orthonormal basis $v_1,\ldots, v_n$ of eigenvectors of $A$ and non-zero real numbers $p$ and $q$
such that 
either $v_j(a) = \frac{p}{q} v_j(b)$ or $v_j(a) = -\frac{q}{p} v_j(b)$, for $j=1,\ldots,n$.
\item
\label{Cond:FCequiv3}
For $r=0,1,\ldots,d$, 
\begin{equation*}
(E_r)_{a,a} - (E_r)_{b,b} = \left(\frac{p}{q} - \frac{q}{p}\right)(E_r)_{a,b}.
\end{equation*}
\item
\label{Cond:FCequiv4}
For $k\geq 0$, 
\begin{equation*}
(A^k)_{a,a} - (A^k)_{b,b} = \left(\frac{p}{q} - \frac{q}{p}\right)(A^k)_{a,b}.
\end{equation*}
\item
\label{Cond:FCequiv5}
$W_{a,a}(X,y) - W_{b,b}(X,y) = \left(\frac{p}{q} - \frac{q}{p}\right) W_{a,b}(X,y)$.
\item
\label{Cond:FCequiv6}
$M_a^TM_a -M_b^TM_b=  \left(\frac{p}{q} - \frac{q}{p}\right) M_a^T M_b$.
\item $\phi(X\backslash \{a\},y) - \phi(X\backslash \{b\}, y) =$ \\ \phantom{a} \hspace{1.5cm} $  \left(\frac{p}{q} - \frac{q}{p}\right)\sqrt{\phi(X\backslash \{a\},y)\phi(X\backslash \{b\}, y)-\phi(X,x)\phi(X\backslash\{a,b\},y)}.$
\label{Cond:FCequiv7}
\item
\label{Cond:FCequiv8}
The $\alg{A}$-modules generated by $(p e_a+q e_b)$ and $(-qe_a+pe_b)$ are orthogonal subspaces of $\C^{V(X)}$.
\end{enumerate}
In  particular, if $p=\pm q$ then $a$ and $b$ are cospectral.
\end{theorem}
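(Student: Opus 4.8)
The plan is to make condition~(\ref{Cond:FCequiv3}) the hub through which all the others pass, since it is the entrywise shadow of the single geometric fact that the orthonormal pair $u_1=\begin{bmatrix}p&q\end{bmatrix}^T$ and $u_2=\begin{bmatrix}-q&p\end{bmatrix}^T$ (orthonormal because $p^2+q^2=1$) simultaneously diagonalizes every $\wt{E_r}$. Throughout I set $w_1=pe_a+qe_b$ and $w_2=-qe_a+pe_b$, so $\tp{w_1}=u_1$ and $\tp{w_2}=u_2$, and I use that for any orthonormal eigenbasis $v_1,\dots,v_n$ of $A$ one has $E_r=\sum_{j:\,Av_j=\theta_rv_j}v_jv_j^T$ and hence $\wt{E_r}=\sum_j\tp{v_j}\tp{v_j}^T$. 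The numbers $p,q$ in conditions~(\ref{Cond:FCequiv2})--(\ref{Cond:FCequiv8}) are to be read existentially; they will be produced, in the proof of (\ref{Cond:FCequiv1})$\Rightarrow$(\ref{Cond:FCequiv2}), as the directions of the common eigenbasis of the commuting family $\{\wt{E_r}\}$.

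First I would settle the structural block (\ref{Cond:FCequiv1}) $\Leftrightarrow$ (\ref{Cond:FCequiv3}) $\Leftrightarrow$ (\ref{Cond:FCequiv8}) $\Leftrightarrow$ (\ref{Cond:FCequiv2}). A direct $2\times2$ computation shows $u_1$ is an eigenvector of the symmetric matrix $\wt{E_r}$ exactly when $(E_r)_{a,a}-(E_r)_{b,b}=(\tfrac pq-\tfrac qp)(E_r)_{a,b}$; as $u_2\perp u_1$, this is the same as $u_2$ being an eigenvector, i.e.\ as $\wt{E_r}$ being diagonal in the basis $\{u_1,u_2\}$. Hence (\ref{Cond:FCequiv3}) says precisely that $\{u_1,u_2\}$ diagonalizes every $\wt{E_r}$, and since a family of real symmetric matrices commutes iff it has a common orthonormal eigenbasis, this is equivalent to (\ref{Cond:FCequiv1}). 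For (\ref{Cond:FCequiv8}), the $\alg{A}$-module generated by $w_i$ equals $\spn\{E_rw_i:r=0,\dots,d\}$, and $\langle E_rw_1,E_sw_2\rangle=\delta_{r,s}\,u_1^T\wt{E_r}u_2$; the two modules are orthogonal iff $u_1^T\wt{E_r}u_2=0$ for every $r$, which is again diagonality of $\wt{E_r}$ in $\{u_1,u_2\}$, so (\ref{Cond:FCequiv8}) $\Leftrightarrow$ (\ref{Cond:FCequiv3}). For (\ref{Cond:FCequiv2}), I would build the eigenbasis explicitly: in each eigenspace the vectors $E_rw_1,E_rw_2$ have restrictions $\wt{E_r}u_1=\alpha_ru_1$ and $\wt{E_r}u_2=\beta_ru_2$ (with $\alpha_r,\beta_r\ge0$ the eigenvalues of $\wt{E_r}$), are mutually orthogonal since $\langle E_rw_1,E_rw_2\rangle=u_1^T\wt{E_r}u_2=0$, and any eigenvector orthogonal to both restricts to $\0$; normalizing the nonzero ones and completing gives (\ref{Cond:FCequiv2}), while conversely (\ref{Cond:FCequiv2}) makes $\wt{E_r}=\sum_j\tp{v_j}\tp{v_j}^T$ a nonnegative combination of $u_1u_1^T$ and $u_2u_2^T$, hence diagonal, recovering (\ref{Cond:FCequiv3}).

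Next comes the analytic chain (\ref{Cond:FCequiv3}) $\Leftrightarrow$ (\ref{Cond:FCequiv4}) $\Leftrightarrow$ (\ref{Cond:FCequiv5}) $\Leftrightarrow$ (\ref{Cond:FCequiv6}). Writing $c_r:=[(E_r)_{a,a}-(E_r)_{b,b}]-(\tfrac pq-\tfrac qp)(E_r)_{a,b}$, condition~(\ref{Cond:FCequiv3}) is ``$c_r=0$ for all $r$''. Multiplying by $\theta_r^k$ and summing yields the $k$-th instance of (\ref{Cond:FCequiv4}) through $A^k=\sum_r\theta_r^kE_r$, giving (\ref{Cond:FCequiv3})$\Rightarrow$(\ref{Cond:FCequiv4}); the converse is a Vandermonde argument, since $\sum_r\theta_r^kc_r=0$ for $k=0,\dots,d$ together with distinctness of the $\theta_r$ forces every $c_r=0$. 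Condition~(\ref{Cond:FCequiv5}) is merely the formal-power-series packaging of (\ref{Cond:FCequiv4}) via $W_{x,y}(X,y)=\sum_k(A^k)_{x,y}y^k$, so the two are equivalent by comparing coefficients. Finally $(M_a^TM_b)_{j,h}=(A^{j+h-2})_{a,b}$ shows (\ref{Cond:FCequiv6}) is the family of instances of (\ref{Cond:FCequiv4}) for exponents $0,\dots,2n-2$, a range containing $0,\dots,d$, so the same Vandermonde argument gives (\ref{Cond:FCequiv6}) $\Leftrightarrow$ (\ref{Cond:FCequiv4}).

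It remains to connect (\ref{Cond:FCequiv5}) with the characteristic-polynomial form (\ref{Cond:FCequiv7}), and this is the step I expect to require the most care. Using Equation~(\ref{Eqn:WalkGen}) for the diagonal generating functions together with the off-diagonal companion $W_{a,b}(X,y)=y^{-1}\phi_{a,b}(y^{-1})/\phi(X,y^{-1})$, where $\phi_{a,b}(x)$ is the signed $(a,b)$-minor of $xI-A$, I would multiply (\ref{Cond:FCequiv5}) by $y\,\phi(X,y^{-1})$ and set $x=y^{-1}$ to obtain $\phi(X\backslash\{a\},x)-\phi(X\backslash\{b\},x)=(\tfrac pq-\tfrac qp)\phi_{a,b}(x)$. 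The delicate ingredient is the identity $\phi_{a,b}(x)^2=\phi(X\backslash\{a\},x)\phi(X\backslash\{b\},x)-\phi(X,x)\phi(X\backslash\{a,b\},x)$, which is Jacobi's theorem on the $\{a,b\}$-minor of the adjugate of $xI-A$ (equivalently Desnanot--Jacobi condensation); substituting it produces exactly the square root in (\ref{Cond:FCequiv7}), and the real work is tracking the sign so that the correct branch $\phi_{a,b}$, rather than $-\phi_{a,b}$, appears. For the concluding assertion, if $p=\pm q$ then $\tfrac pq-\tfrac qp=0$, so (\ref{Cond:FCequiv3}) collapses to $(E_r)_{a,a}=(E_r)_{b,b}$ for all $r$, which is exactly the stated criterion for $a$ and $b$ to be cospectral.
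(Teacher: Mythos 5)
Your overall architecture---making condition~(\ref{Cond:FCequiv3}) the hub, reading it as simultaneous diagonalization of the $\wt{E_r}$'s in the orthonormal frame $\{u_1,u_2\}$, and then running the Vandermonde/generating-function chain out to (\ref{Cond:FCequiv4})--(\ref{Cond:FCequiv7})---is essentially the paper's proof, just reorganized. The paper proves the cycle (\ref{Cond:FCequiv1})$\Rightarrow$(\ref{Cond:FCequiv2})$\Rightarrow$(\ref{Cond:FCequiv3})$\Rightarrow$(\ref{Cond:FCequiv1}), obtaining the eigenbasis in (\ref{Cond:FCequiv2}) by applying Proposition~\ref{Prop:Commute} to $H=M_1$, whereas you prove (\ref{Cond:FCequiv1})$\Leftrightarrow$(\ref{Cond:FCequiv3}) directly by the $2\times 2$ eigenvector computation and then assemble the basis by hand from $E_rw_1$ and $E_rw_2$; both constructions work, and your explicit Vandermonde and Jacobi-identity steps correctly supply what the paper delegates to citations of Godsil--Smith and of Corollary~4.1.3 in Godsil's book.

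The one point you must not gloss over is where the \emph{non-zero} $p$ and $q$ come from in (\ref{Cond:FCequiv1})$\Rightarrow$(\ref{Cond:FCequiv3}). Commutativity of the $\wt{E_r}$'s gives a common orthonormal eigenbasis of $\R^2$, but if every $\wt{E_r}$ were diagonal that basis could be the coordinate basis, forcing $q=0$, and then no admissible $p,q$ need exist (diagonal $\wt{E_r}$'s with $(E_r)_{a,a}\neq (E_r)_{b,b}$ commute yet violate (\ref{Cond:FCequiv3}) for every non-zero $p,q$). Connectedness of $X$ is what rules this out: since $(A^k)_{a,b}=\sum_r \theta_r^k (E_r)_{a,b}$ is non-zero for some $k$, some $(E_r)_{a,b}\neq 0$, so some $\wt{E_r}$ is non-diagonal; its eigenvectors have both coordinates non-zero and they are forced to be the common eigenvectors of the entire family. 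The paper makes exactly this observation (``Since $X$ is connected, at least one of the $\wt{E_r}$'s is not diagonal''); with that sentence added, your argument is complete.
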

\begin{proof}
Assume $a$ and $b$ are fractionally cospectral vertices in $X$.
Since $\wt{E_r}$'s are commuting and symmetric, they have spectral decomposition
\begin{equation*}
\wt{E_r} = \lambda_{r,1} M_1+\lambda_{r,2} M_2, \quad \text{for $r=0,1,\ldots,d$.}
\end{equation*}
Since $X$ is connected, at least one of the $\wt{E_r}$'s is not diagonal which implies $M_1$ and $M_2=I_2-M_1$ are non-diagonal.
There exist non-zero real numbers $p$ and $q$ such that $p^2+q^2=1$, and
\begin{equation*}
M_1= \begin{bmatrix} p \\ q \end{bmatrix}\begin{bmatrix} p & q \end{bmatrix}
\quad \text{and} \quad
M_2= \begin{bmatrix} -q \\ p \end{bmatrix}\begin{bmatrix} -q & p \end{bmatrix}.
\end{equation*}
Observe that $M_1$  commutes with $\wt{E_r}$, for $r=0,\ldots, d$, so applying Proposition~\ref{Prop:Commute} to 
$H=M_1$ gives an orthonormal basis of real eigenvectors $v_1,\ldots, v_n$ of $A$ such that either $\tp{v_j}=\0$ or $\tp{v_j}$ is an eigenvector of $M_1$,
for $j=1,\ldots,n$.    If $\tp{v_j}\neq \0$ then either $M_1 \tp{v_j} = 1\cdot \tp{v_j}$ or $M_1 \tp{v_j} = 0 \cdot \tp{v_j}$.  
The former case implies $\tp{v_j} \in \spn\{\begin{bmatrix}p&q\end{bmatrix}^T\}$ while the latter implies $\tp{v_j} \in \spn\{\begin{bmatrix}-q&p\end{bmatrix}^T\}$.
As a result, (\ref{Cond:FCequiv1}) implies (\ref{Cond:FCequiv2}).

Suppose (\ref{Cond:FCequiv2}) holds.  Then for $j=1,\ldots,n$, we have
\begin{equation*}
(\tp{v_j}\tp{v_j}^T)_{a,a}-(\tp{v_j}\tp{v_j}^T)_{b,b} = \left(\frac{p}{q}-\frac{q}{p}\right) (\tp{v_j}\tp{v_j}^T)_{a,b}.
\end{equation*}
Statement  (\ref{Cond:FCequiv3}) follows from the fact that $E_r$ is the sum of $(\tp{v_j}\tp{v_j}^T)$ over the $v_j$'s in the $\theta_r$-eigenspace of $A$.

Suppose (\ref{Cond:FCequiv3}) holds.   For $h, l = 0,\ldots, d$,   the diagonal entries of $\wt{E_h}\wt{E_l}  - \wt{E_l} \wt{E_h}$ are zero, and
its  off-diagonal entries are equal to
\begin{equation*}
(E_l)_{a,b} \Big( (E_h)_{a,a}- (E_h)_{b,b}\Big) - (E_h)_{a,b}\Big( (E_l)_{a,a}- (E_l)_{b,b} \Big) 
\end{equation*}
which is zero after applying  (\ref{Cond:FCequiv3}) to $E_h$ and $E_l$.  Therefore  (\ref{Cond:FCequiv1}) holds

Statements  (\ref{Cond:FCequiv3}) and  (\ref{Cond:FCequiv4}) are equivalent because $A^k \in \spn\{E_0,\ldots, E_d\}$, for $k\geq 0$,
and $E_r$ is a polynomial in $A$, for $r=0,\ldots,d$.

It follows from Equation~(\ref{Eqn:WalkGenA}) that (\ref{Cond:FCequiv4}) is equivalent to (\ref{Cond:FCequiv5}).   
The proof of Lemma~2.1 of \cite{Godsil2017} shows the equivalence of 
(\ref{Cond:FCequiv5}) and (\ref{Cond:FCequiv6}).   The equivalence of (\ref{Cond:FCequiv5}) and (\ref{Cond:FCequiv7}) follows from Equation~(\ref{Eqn:WalkGen})
and Corollary~4.1.3 of
\cite{GodsilAC}.

Statement (\ref{Cond:FCequiv3}) holds if and only if
\begin{equation*}
\langle E_r (p e_a+q e_b), E_r (-qe_a+pe_b)\rangle=0, \quad \text{for $r=0,\ldots,d$},
\end{equation*}
which is equivalent to (\ref{Cond:FCequiv8}).
\end{proof}

\begin{corollary}
\label{Cor:FC}
Suppose $a$ is fractionally cospectral to $b$ in a simple graph $X$ and one of the following conditions hold.
\begin{enumerate}[i.]
\item 
\label{Cond:FCi}
$a$ is adjacent to $b$.
\item
\label{Cond:FCii}
$X$ is bipartite, and the distance between $a$ and $b$ is odd.
\item
\label{Cond:FCiii}
$a$ and $b$ have the same degree and are at distance two in $X$.
\item
\label{Cond:FCiv}
$X$ is regular and connected.
\end{enumerate}
Then $a$ and $b$ are cospectral.
\end{corollary}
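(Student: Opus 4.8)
The plan is to reduce the entire corollary to showing that the real constant $c := \frac{p}{q}-\frac{q}{p}$, which governs the characterizations of Theorem~\ref{Thm:FCequiv}, must vanish in each of the four cases. Indeed, once $c=0$, statement~(\ref{Cond:FCequiv3}) of Theorem~\ref{Thm:FCequiv} reads $(E_r)_{a,a}=(E_r)_{b,b}$ for every $r$, which is exactly the defining condition for $a$ and $b$ to be cospectral (this also matches the final remark of that theorem, since $c=0$ is equivalent to $p=\pm q$). So in each case I only need to exhibit one spectral identity of the shape ``$0 = c \cdot (\text{something nonzero})$''.

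For conditions~(\ref{Cond:FCi}) and~(\ref{Cond:FCiii}) I would feed small powers of $A$ into statement~(\ref{Cond:FCequiv4}), namely $(A^k)_{a,a}-(A^k)_{b,b}=c\,(A^k)_{a,b}$. Since $X$ is simple, $(A)_{a,a}=(A)_{b,b}=0$, while $(A^2)_{a,a}=\deg(a)$, $(A^2)_{b,b}=\deg(b)$, and $(A^2)_{a,b}=|N(a)\cap N(b)|$. In case~(\ref{Cond:FCi}) the choice $k=1$ gives $0=c\,(A)_{a,b}=c$, because $a\sim b$ forces $(A)_{a,b}=1$. In case~(\ref{Cond:FCiii}) the choice $k=2$ gives $\deg(a)-\deg(b)=c\cdot|N(a)\cap N(b)|$; the hypothesis $\deg(a)=\deg(b)$ kills the left-hand side, while distance two guarantees at least one common neighbour, so the factor multiplying $c$ is a positive integer and $c=0$. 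For condition~(\ref{Cond:FCiv}) I would instead use statement~(\ref{Cond:FCequiv3}) with the Perron idempotent: connectedness and regularity make the all-ones vector span a simple eigenspace, so its idempotent is $E_0=\frac{1}{n}J$ and $(E_0)_{a,a}=(E_0)_{b,b}=(E_0)_{a,b}=\frac{1}{n}$; substituting $r=0$ yields $0=\frac{c}{n}$, hence $c=0$.

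The delicate case, and the main obstacle, is~(\ref{Cond:FCii}). Here I would exploit the bipartite symmetry: letting $S$ be the diagonal $\pm1$ matrix recording the two colour classes, one has $SAS=-A$, so $S$ conjugates the idempotent of any eigenvalue $\theta$ to the idempotent of $-\theta$; write these as $E_\theta$ and $E_{-\theta}$. Because $S_{aa}^2=S_{bb}^2=1$, the diagonal spectral entries are invariant, $(E_{-\theta})_{a,a}=(E_\theta)_{a,a}$ and likewise for $b$, whereas $(E_{-\theta})_{a,b}=S_{aa}S_{bb}(E_\theta)_{a,b}$. Odd distance in a bipartite graph places $a$ and $b$ in opposite colour classes, so $S_{aa}S_{bb}=-1$ and the off-diagonal entries flip sign. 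Applying statement~(\ref{Cond:FCequiv3}) to both $\theta$ and $-\theta$ and comparing yields $2c\,(E_\theta)_{a,b}=0$ for every $\theta$.

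It then remains to produce a single eigenvalue with nonzero off-diagonal entry. Since $(A^k)_{a,b}=\sum_{\theta_r\neq 0}\theta_r^k (E_r)_{a,b}$ is strictly positive when $k$ equals the finite distance between $a$ and $b$, some $(E_r)_{a,b}$ with $\theta_r\neq 0$ must be nonzero, and that forces $c=0$. The one subtlety I anticipate is a possible eigenvalue $\theta=0$, which is its own negative and yields no information; the walk-count identity sidesteps this, as the $\theta=0$ term drops out for every $k\ge 1$.
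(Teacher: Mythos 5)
Your proposal is correct, and for conditions~(\ref{Cond:FCi}), (\ref{Cond:FCiii}) and (\ref{Cond:FCiv}) it coincides with the paper's proof: plug $k=1$ or $k=2$ into Theorem~\ref{Thm:FCequiv}~(\ref{Cond:FCequiv4}), or the idempotent $\frac{1}{n}J$ into (\ref{Cond:FCequiv3}), and conclude $\frac{p}{q}-\frac{q}{p}=0$. The only divergence is condition~(\ref{Cond:FCii}), which you treat as the delicate case but which the paper dispatches by the same one-line mechanism as the others: take $k$ equal to the (odd) distance between $a$ and $b$; bipartiteness forces $(A^k)_{a,a}=(A^k)_{b,b}=0$ because a bipartite graph has no closed walks of odd length, while $(A^k)_{a,b}\neq 0$ since a shortest path exists, so (\ref{Cond:FCequiv4}) immediately gives $\frac{p}{q}-\frac{q}{p}=0$. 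Your alternative via the signing matrix $S$ with $SAS=-A$, the induced pairing $SE_\theta S=E_{-\theta}$, and the sign flip of off-diagonal entries across colour classes is also valid (and your handling of the $\theta=0$ idempotent is careful and correct), but it reproves in spectral language what the walk-counting identity gives directly; the only thing it buys is an explicit statement that $c\,(E_\theta)_{a,b}=0$ eigenvalue by eigenvalue, which is not needed here.
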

\begin{proof}
By Theorem~\ref{Thm:FCequiv}~(\ref{Cond:FCequiv4}), if there exists $k>0$ such that 
\begin{equation}
\label{Eqn:FC}
(A^k)_{a,a}-(A^k)_{b,b}=0 \quad \text{and} \quad (A^k)_{a,b} \neq 0,
\end{equation}
then $\frac{p}{q}-\frac{q}{p}=0$, and $a$ and $b$ are cospectral.

For Condition~(\ref{Cond:FCi}),  (\ref{Eqn:FC}) holds for $k=1$.
For Condition~(\ref{Cond:FCii}), (\ref{Eqn:FC}) holds when $k$ is the distance of $a$ and $b$.
For Condition~(\ref{Cond:FCiii}), (\ref{Eqn:FC}) holds for $k=2$.

For Condition~(\ref{Cond:FCiv}), one of the $E_r$'s is $\frac{1}{|V(X)|} J$. 
If follows from Theorem~\ref{Thm:FCequiv}~(\ref{Cond:FCequiv3}) that 
$\frac{p}{q}-\frac{q}{p}=0$, and $a$ and $b$ are cospectral.
\end{proof}
%%%%%%%%%%%%%%%%%%%%%%%%%%%%%%%%%%%%%%%%%%%%%%%%%%%%%%%%%%%%%%%%%%%%%%%%%%%%%
%%%%%%%%%%%%%%%%%%%%%%%%%%%%%%%%%%%%%%%%%%%%%%%%%%%%%%%%%%%%%%%%%%%%%%%%%%%%%
\section{Strongly fractionally cospectral vertices}
\label{Section:SFC}

In a graph $X$, two vertices $a$ and $b$ are \textsl{parallel} if the column vectors $E_r e_a$ and $E_r e_b$ are parallel, for $r=0,1,\ldots, d$.
Further, if $E_re_a = \pm E_r e_b$, for $r=0,\ldots,d$, then we say $a$ and $b$ are \textsl{strongly cospectral}.
Lemma~4.1 of \cite{Godsil2017} states that two vertices are strongly cospectral if and only if they are both cospectral and parallel.
In this section, we show that $a$ and $b$ being fractionally cospectral and parallel is equivalent to $X$ being properly decomposable with respect to $\{a,b\}$, which motivates  Definition~\ref{Def:SFC}.

\begin{theorem}
\label{Thm:SFC}
A connected weighted graph $X$ is properly decomposable with respect to $K=\{a, b\}$ if and only if 
$a$ and $b$ are fractionally cospectral and parallel.
\end{theorem}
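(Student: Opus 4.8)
The plan is to prove the two implications separately, leaning on the structure of $P_{min}^K$ and on the decomposability criterion of Theorem~\ref{Thm:Decomp}. For the forward implication, suppose $X$ is properly decomposable with respect to $K=\{a,b\}$. Fractional cospectrality is then immediate, since Theorem~\ref{Thm:FC} already asserts that the $\wt{E_r}$'s commute, which is exactly Definition~\ref{Def:FC}. To get that $a$ and $b$ are parallel I would invoke the structural facts recorded just before Theorem~\ref{Thm:FC}: by Theorem~\ref{Thm:Decomp}, Proposition~\ref{Prop:Connected}, and Theorem~\ref{thm:inequalityandparallel}, proper decomposability forces exactly two of the $\wt{F_j}$'s to be nonzero (and rank-one, non-diagonal). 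Since $F_j D_K\neq\0$ if and only if $\wt{F_j}\neq\0$ (the positive semidefiniteness argument of Proposition~\ref{Prop:ZeroBlock}), this gives $|\{j:F_jD_K\neq\0\}|=2=|K|$, so equality holds in Theorem~\ref{thm:inequalityandparallel}. The conclusion of that theorem under equality is precisely that the columns of every $E_r$ indexed by $K$ are parallel vectors, i.e. $a$ and $b$ are parallel.

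For the reverse implication, assume $a,b$ are fractionally cospectral and parallel. First I would extract the eigenstructure: the $\wt{E_r}$'s commute and are symmetric, hence are simultaneously diagonalized, and connectedness forces at least one $\wt{E_r}$ to be non-diagonal (otherwise $\wt{A^k}$ is diagonal for all $k$, giving no $a$--$b$ walk of any length). Thus the common eigenbasis is $\begin{bmatrix}p\\q\end{bmatrix},\begin{bmatrix}-q\\p\end{bmatrix}$ with $p,q\neq0$ and $p^2+q^2=1$; write $M_1,M_2$ for the corresponding rank-one projections. Next, parallelness forces $\det\wt{E_r}=0$ for every $r$ (from $E_re_a\parallel E_re_b$ one obtains $(E_r)_{a,a}(E_r)_{b,b}=(E_r)_{a,b}^2$), so each nonzero $\wt{E_r}$ is a positive multiple of $M_1$ or of $M_2$; let $C_1,C_2$ be the index sets of the two types. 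Both are nonempty because $\sum_r\wt{E_r}=I_2=M_1+M_2$ forces the $M_1$- and $M_2$-components of the sum to equal $1$.

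The crux is then a cancellation showing that $F_{C_1}:=\sum_{r\in C_1}E_r\in\alg{A}$ is block diagonal with non-diagonal $K$-block. Parallelness lets me write $E_re_a=\tfrac{p}{q}E_re_b$ for $r\in C_1$ and $E_se_a=-\tfrac{q}{p}E_se_b$ for $s\in C_2$, so that
\[
E_r D_K E_s=(E_re_a)(E_se_a)^T+(E_re_b)(E_se_b)^T=\Big(\!-\tfrac{p}{q}\cdot\tfrac{q}{p}+1\Big)(E_re_b)(E_se_b)^T=\0,
\]
and $E_rD_KE_s=\0$ trivially whenever one index has $\wt{E_r}=\0$ (positive semidefiniteness kills the whole $a,b$ column block). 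Hence every cross term between $C_1$ and its complement vanishes, so $F_{C_1}D_K=D_KF_{C_1}$, i.e. $F_{C_1}$ is block diagonal, and its $K$-block is $\sum_{r\in C_1}\wt{E_r}=M_1$, which is non-diagonal. This exhibits the required non-identity block-diagonal matrix in $\alg{A}$, so $X$ is properly decomposable with respect to $K$. Equivalently, the same computation shows $C_1$ and $C_2$ are distinct classes of $P_{min}^K$ carrying two nonzero non-diagonal $\wt{F_j}$'s, and Theorem~\ref{Thm:Decomp} applies.

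The main obstacle is exactly this cancellation in the reverse direction, since it is where fractional cospectrality and parallelness must be combined. Fractional cospectrality alone only yields simultaneous diagonalizability of the $\wt{E_r}$'s (which could individually have rank two), while parallelness alone does not pin the eigendirections; it is the combination---parallelness to trade $E_re_a$ for $E_re_b$, together with the fixed ratios $p/q$ and $-q/p$ supplied by the two eigendirections---that produces the $-1+1=0$ annihilating the off-diagonal blocks. In writing this out I would take care to treat the degenerate indices (those with $\wt{E_r}=\0$) separately and to establish $p,q\neq0$ from connectedness before the cancellation becomes meaningful.
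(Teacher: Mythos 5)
Your proof is correct and follows the same overall architecture as the paper's. The forward direction is essentially verbatim: Theorem~\ref{Thm:FC} for fractional cospectrality, and the equality case of Theorem~\ref{thm:inequalityandparallel} (reached via Theorem~\ref{Thm:Decomp} and Proposition~\ref{Prop:Connected}, plus the observation from Proposition~\ref{Prop:ZeroBlock} that $F_jD_K\neq\0$ iff $\wt{F_j}\neq\0$) for parallelness. The reverse direction also starts identically: simultaneous diagonalization of the commuting $\wt{E_r}$'s, parallelness forcing $\rk\wt{E_r}\le 1$ so each nonzero $\wt{E_r}$ is a positive multiple of one of the two projections $M_1,M_2$, and the resulting split of indices into $C_1$, $C_2$, $C_3$ with $\sum_j\wt{F_j}=I_2$ forcing $\wt{F_j}=M_j$. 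The one genuine divergence is the last step, where you must show $F_j=\sum_{r\in C_j}E_r$ commutes with $D_K$. The paper gets this from idempotency: since $\wt{F_j}$ is itself an idempotent, comparing the $(1,1)$ block of $F_j^2=F_j$ gives $B_j^TB_j=\0$, hence $B_j=\0$; this never uses the explicit entries of $M_1$ and $M_2$. You instead verify $E_rD_KE_s=\0$ directly for $r\in C_1$, $s\in C_2$ via the cancellation $\frac{p}{q}\cdot\bigl(-\frac{q}{p}\bigr)=-1$. Both arguments are short and correct; the paper's is marginally slicker (it only needs $M_1M_2=\0$), while yours makes the role of the two orthogonal eigendirections explicit and ties more directly to the $E_rD_KE_s$ formalism of the eigenvalue support from Section~\ref{Section:K-FR}. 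Your appeal to connectedness to produce a non-diagonal $\wt{E_r}$ (hence $p,q\neq0$) is exactly the step the paper itself takes in Theorem~\ref{Thm:FCequiv} and at the end of its own proof here, so you are on the same footing throughout.
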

\begin{proof}
Suppose $X$ is properly decomposable with respect to $K=\{a, b\}$. By Theorem~\ref{Thm:FC}, $a$ is fractionally cospectral to $b$, and by Theorem~\ref{thm:inequalityandparallel}, Theorem~\ref{Thm:Decomp} and Proposition~\ref{Prop:Connected}, vertices $a$ and $b$ are parallel.

Conversely, assume the $\wt{E_r}$'s commute with each other and there exists $\sigma_r \in \R$ such that
\begin{equation*}
E_r e_a = \sigma_r E_r e_b, \quad \text{for $r=0,\ldots, d$}.
\end{equation*}
Since the $\wt{E_r}$'s are simultaneously diagonalizable, there exists $2\times 2$ orthogonal projection matrices 
$M_1$ and $M_2$ such that
\begin{equation*}
\wt{E_r} = \lambda_{r,1} M_1 +\lambda_{r,2} M_2.
\end{equation*}
As $\rk(\wt{E_r}) \leq 1$, at most one of $\lambda_{r,1}$ and $\lambda_{r,2}$ is non-zero for each $r$.

Define $C_1=\{r: \lambda_{r,1}\neq 0\}$, $C_2=\{r: \lambda_{r,2}\neq 0\}$ and $C_3=\{r: \lambda_{r,1}=\lambda_{r,2}=0\}$.
For $j=1,2,3$, let 
\begin{equation*}
F_j = \sum_{r\in C_j}E_r =
\begin{bmatrix}
\wt{F_j} & B_j^T\\
B_j & C_j
\end{bmatrix}.
\end{equation*}   
There exist $\beta_1, \beta_2\in \R$ such that
\begin{equation*}
\wt{F_j} = \sum_{r\in C_j} \wt{E_r} = \beta_j M_j, \quad \text{for $j=1,2$,}
\end{equation*}
and $\wt{F_3}=\0$.

Then $\wt{F_1}+\wt{F_2}+\wt{F_3} = \wt{(\sum_{r=0}^d E_r)} = I_2 $, so
\begin{equation*}
\beta_1 M_1 + \beta_2 M_2 +\0 =I_2
\end{equation*}
which yields $\beta_1=\beta_2=1$.  Hence $\wt{F_j}^2=\wt{F_j}$, for $j=1,2,3$.

For $j=1,2,3$,  $F_j^2=F_j$ implies $B_j^T B_j = \0$.
Therefore, $B_j=\0$ and $F_j D_K = D_K F_j$.   As $X$ is connected, $\wt{F_1}$ is non-diaogonal and  $X$ is properly decomposable with respect to $\{a,b\}$.
\end{proof}

\begin{definition}
\label{Def:SFC}
We say $a$ is \textsl{strongly fractionally cospectral} to $b$ if $X$
is properly decomposable with respect to $\{a,b\}$.
\end{definition}

%The following graph contains two fractionally cospectral vertices that are not strongly fractionally cospectral to each other.
%\begin{example}
%\label{Ex:C4}
%Let $C_4$ be the cycle of length four and its adjacency matrix has the spectral decomposition
%\begin{equation*}
%\begin{bmatrix} 0&1&0&1\\1&0&1&0\\0&1&0&1\\1&0&1&0\end{bmatrix} 
%=2   \begin{bmatrix}\frac{1}{4} & \frac{1}{4} &\frac{1}{4} & \frac{1}{4} \\ \frac{1}{4} & \frac{1}{4} &\frac{1}{4} & \frac{1}{4} \\\frac{1}{4} & \frac{1}{4} &\frac{1}{4} & \frac{1}{4} \\\frac{1}{4} & \frac{1}{4} &\frac{1}{4} & \frac{1}{4}   \end{bmatrix} + 
%0  \begin{bmatrix} \frac{1}{2}&0&-\frac{1}{2}&0\\0&\frac{1}{2}&0&-\frac{1}{2}\\-\frac{1}{2}&0&\frac{1}{2}&0\\0&-\frac{1}{2}&0&\frac{1}{2}\\\end{bmatrix} +
%(-2)\begin{bmatrix}\frac{1}{4} & -\frac{1}{4} & \frac{1}{4} &- \frac{1}{4} \\- \frac{1}{4} & \frac{1}{4} & -\frac{1}{4} & \frac{1}{4} \\ \frac{1}{4} & -\frac{1}{4} & \frac{1}{4} & -\frac{1}{4} \\ -\frac{1}{4} & \frac{1}{4} & -\frac{1}{4} & \frac{1}{4} \\  \end{bmatrix}.
%\end{equation*}
%The vertices $1$ and $2$ are fractionally cospectral but not strongly fractionally cospectral.
%\end{example}

\begin{corollary}
\label{Cor:SFC}
Let $X$ be a connected weighted graph.
Let $K=\{a,b\} \subset V(X)$ and $P_{min}^K = \{C_1,\ldots, C_z\}$.  The following statements are equivalent.
\begin{enumerate}[i.]
\item
\label{Cond:SFC1}
The vertices $a$ and $b$ are strongly fractionally cospectral in $X$ with
\begin{equation*}
\wt{F_1} = \begin{bmatrix} p \\ q \end{bmatrix}\begin{bmatrix} p & q \end{bmatrix}
\quad \text{and} \quad
\wt{F_2} = \begin{bmatrix} -q \\ p \end{bmatrix}\begin{bmatrix} -q & p \end{bmatrix}.
\end{equation*}
for some non-zero $p$ and $q$ satisfying $p^2+q^2=1$.  
\item
\label{Cond:SFC2}
For $r=0,\ldots,d$,   
\begin{equation*}
E_re_a=E_re_b=\0 \quad \text{if $r\not\in C_1 \cup C_2$}
\end{equation*} 
and
\begin{equation*}
E_r e_a = 
\begin{cases}
\frac{p}{q} E_r e_b & \text{if $r\in C_1$,}\\
\frac{-q}{p} E_r e_b & \text{if $r\in C_2$.}
\end{cases}
\end{equation*}
\item
\label{Cond:SFC3}
For any eigenvector $v$ of $A$, either $v(a) = \frac{p}{q} v(b)$ or $v(a) = -\frac{q}{p} v(b)$.  
\end{enumerate}
\end{corollary}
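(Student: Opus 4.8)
The plan is to prove the cycle of implications (\ref{Cond:SFC1})~$\Rightarrow$~(\ref{Cond:SFC2})~$\Rightarrow$~(\ref{Cond:SFC3})~$\Rightarrow$~(\ref{Cond:SFC1}), since the first two implications are essentially bookkeeping while the last one carries the real content. Throughout I would keep the labelling convention that $C_1,C_2$ are the two classes of $P_{min}^K$ with nonzero diagonal blocks and use the structural facts recalled before Theorem~\ref{Thm:FC}.

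For (\ref{Cond:SFC1})~$\Rightarrow$~(\ref{Cond:SFC2}): assuming strong fractional cospectrality, Definition~\ref{Def:SFC} and Theorem~\ref{Thm:SFC} give proper decomposability and hence that $a,b$ are parallel, so $E_r e_a = \sigma_r E_r e_b$ for each $r$. For $r \notin C_1 \cup C_2$ the block $\wt{F_j}=\0$, and Proposition~\ref{Prop:ZeroBlock} forces $\wt{E_r}=\0$, whence (from the argument in that proof) $E_r e_a = E_r e_b = \0$. For $r \in C_1$, Equation~(\ref{Eqn:K-FE}) gives $\wt{F_1}\wt{E_r}=\wt{E_r}$, so the columns of $\wt{E_r}$ lie in $\spn\{(p,q)^T\}$; using symmetry of $\wt{E_r}$ one finds $\tp{(E_r e_a)} = \tfrac{p}{q}\tp{(E_r e_b)}$ with $\tp{(E_r e_b)}\neq\0$ (again by Proposition~\ref{Prop:ZeroBlock}). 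Combined with parallelism this pins $\sigma_r=p/q$, giving $E_r e_a = \tfrac{p}{q}E_r e_b$; the case $r\in C_2$ is identical with ratio $-q/p$.

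For (\ref{Cond:SFC2})~$\Rightarrow$~(\ref{Cond:SFC3}): given an eigenvector $v$ with $Av=\theta_r v$, I would write $v(a)=(E_r e_a)^T v$ and $v(b)=(E_r e_b)^T v$ using $E_r v = v$ and the symmetry of $E_r$. Substituting the three cases of (\ref{Cond:SFC2}) immediately yields $v(a)=\tfrac{p}{q}v(b)$, or $v(a)=-\tfrac{q}{p}v(b)$, or $v(a)=v(b)=0$, in which last case both relations hold trivially.

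The substance is in (\ref{Cond:SFC3})~$\Rightarrow$~(\ref{Cond:SFC1}). First, (\ref{Cond:SFC3}) specialized to any orthonormal eigenbasis is exactly condition (\ref{Cond:FCequiv2}) of Theorem~\ref{Thm:FCequiv}, so $a$ and $b$ are fractionally cospectral. Next I would establish parallelism by a union-of-subspaces argument: for fixed $r$, every nonzero vector of $\mathrm{col}(E_r)$ is an eigenvector, so $\mathrm{col}(E_r)$ is contained in $\ker\ell_1 \cup \ker\ell_2$, where $\ell_1(v)=v(a)-\tfrac{p}{q}v(b)$ and $\ell_2(v)=v(a)+\tfrac{q}{p}v(b)$. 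Since a vector space is never the union of two proper subspaces, $\mathrm{col}(E_r)\subseteq\ker\ell_1$ or $\mathrm{col}(E_r)\subseteq\ker\ell_2$; applying the resulting identity to $v=E_r e_c$ for every vertex $c$ gives $E_r e_a = \tfrac{p}{q}E_r e_b$ or $E_r e_a = -\tfrac{q}{p}E_r e_b$, so $a$ and $b$ are parallel. Theorem~\ref{Thm:SFC} then yields proper decomposability, i.e.\ strong fractional cospectrality by Definition~\ref{Def:SFC}. The remaining task, which I expect to be the fussiest step, is to recover the precise forms of $\wt{F_1}$ and $\wt{F_2}$ with the \emph{given} $p,q$: the facts cited before Theorem~\ref{Thm:FC} (via Theorem~\ref{thm:inequalityandparallel}, Theorem~\ref{Thm:Decomp} and Proposition~\ref{Prop:Connected}) already force exactly two nonzero $\wt{F_j}$, each a rank-one projection onto $\spn\{(p',q')^T\}$ and its orthogonal complement for some $p',q'$. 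Matching the ratio $p'/q'$ against the value $p/q$ or $-q/p$ forced on $\mathrm{col}(E_r)$ shows $(p',q')$ is proportional to $(p,q)$ or to $(-q,p)$, and after possibly swapping the labels of $C_1$ and $C_2$ this is precisely the asserted form, completing the cycle.
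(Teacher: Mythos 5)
The paper states this corollary without proof, so there is no official argument to compare against; judged on its own, your proof is correct and is assembled from exactly the ingredients the authors clearly intend (Theorem~\ref{Thm:SFC}, Theorem~\ref{Thm:FCequiv}, Proposition~\ref{Prop:ZeroBlock}, Equation~(\ref{Eqn:K-FE}) and the rank-one structure of $\wt{F_1},\wt{F_2}$ established before Theorem~\ref{Thm:FC}). The two places where you supply something the paper leaves tacit are both handled properly: in (\ref{Cond:SFC1})$\Rightarrow$(\ref{Cond:SFC2}) you correctly upgrade the $2\times 2$ relation $\tp{(E_re_a)}=\tfrac{p}{q}\tp{(E_re_b)}$ to the full-vector identity by invoking parallelism together with $\tp{(E_re_b)}\neq\0$ (which needs $q\neq 0$ and Proposition~\ref{Prop:ZeroBlock}, both of which you cite); and in (\ref{Cond:SFC3})$\Rightarrow$(\ref{Cond:SFC1}) the observation that $\mathrm{col}(E_r)$ cannot be the union of two proper subspaces is the right way to turn a pointwise dichotomy on eigenvectors into a uniform choice per eigenspace, hence into parallelism. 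The final bookkeeping step also goes through because $p/q\neq -q/p$ for nonzero reals, so the ratio forced on $\mathrm{col}(\wt{E_r})$ is constant on each of $C_1$, $C_2$ and determines $(p',q')$ up to sign and label swap. I see no gap.
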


We have $p=\pm q$ when $a$ is strongly cospectral to $b$.   In this case, 
\begin{equation*}
Q=\sum_{r\in C_1} E_r - \sum_{s\in C_2}E_s
\end{equation*}
is the orthogonal symmetry of $X$ mentioned in Theorem~11.2 of \cite{Godsil2017}.

Lemma~8.3 of \cite{Godsil2017} gives a characterization of parallel vertices in terms of characteristic polynomials of the graphs $X$ and $X\backslash\{a,b\}$.
We use this result to generalize the characterization of strongly cospectral vetices in Lemma~8.4 of \cite{Godsil2017} to strongly fractional cospectrality.

%\begin{lemma}
%\label{Lem:Parallel}
%Distance vertices $a$ and $b$ of $X$ are parallel if and only if all poles of the rational funtion
%\begin{equation*}
%\frac{\phi(X\backslash\{a,b\},y)}{\phi(X,y)}
%\end{equation*}
%are simple.
%\end{lemma}

\begin{corollary}
\label{Cor:SFC-poles}
Distinct vertices $a$ and $b$ in $X$ are strongly fractionally cospectral if and only if they are fractionally cospectral and all poles of 
\begin{equation*}
\frac{\phi(X\backslash\{a,b\},y)}{\phi(X,y)}
\end{equation*}
are simple.
\end{corollary}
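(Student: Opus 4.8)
The plan is to reduce the statement, via the structural results already established, to a pole-counting fact about the rational function $\phi(X\backslash\{a,b\},y)/\phi(X,y)$. By Definition~\ref{Def:SFC} together with Theorem~\ref{Thm:SFC}, the vertices $a$ and $b$ are strongly fractionally cospectral precisely when they are fractionally cospectral \emph{and} parallel. Since fractional cospectrality appears on both sides of the claimed equivalence, it suffices to prove that $a$ and $b$ are parallel if and only if every pole of $\phi(X\backslash\{a,b\},y)/\phi(X,y)$ is simple. This is exactly the content of Lemma~8.3 of \cite{Godsil2017}; notably, fractional cospectrality (the commuting of the $\wt{E_r}$) plays no role in this equivalence, and the remaining steps reprove it in the present weighted setting.

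First I would record the determinantal identity that drives everything. Writing the resolvent as $(yI-A)^{-1}=\sum_{r=0}^d (y-\theta_r)^{-1}E_r$, its principal $\{a,b\}$-submatrix is $\sum_{r=0}^d (y-\theta_r)^{-1}\wt{E_r}$, and Jacobi's complementary-minor identity (equivalently a Schur-complement computation) gives
\[ \det\Big(\sum_{r=0}^d \frac{\wt{E_r}}{y-\theta_r}\Big) = \frac{\phi(X\backslash\{a,b\},y)}{\phi(X,y)}. \]
Call this rational function $G(y)$; its only possible poles are the eigenvalues $\theta_r$.

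Next I would extract, for each fixed $\theta_r$, the coefficient of $(y-\theta_r)^{-2}$ in $G$. Splitting $\sum_s (y-\theta_s)^{-1}\wt{E_s} = (y-\theta_r)^{-1}\wt{E_r} + H_r(y)$ with $H_r$ holomorphic at $\theta_r$, the $2\times 2$ determinant expands as a polynomial in $(y-\theta_r)^{-1}$ whose leading coefficient is exactly $\det\wt{E_r}$, while the $(y-\theta_r)^{-1}$-term involves only $H_r(\theta_r)$ and is irrelevant for detecting double poles. Because each $E_r$ is positive semidefinite, $\det\wt{E_r}=(E_r)_{a,a}(E_r)_{b,b}-(E_r)_{a,b}^2\ge 0$ by Cauchy--Schwarz, with equality if and only if $E_r e_a$ and $E_r e_b$ are parallel. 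Hence $G$ has a pole of order two at $\theta_r$ exactly when $E_r e_a$ and $E_r e_b$ fail to be parallel, so every pole of $G$ is simple if and only if $\det\wt{E_r}=0$ for all $r$, that is, if and only if $a$ and $b$ are parallel. Combined with the reduction of the first paragraph, this yields the corollary.

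The main obstacle I anticipate is not the residue computation, which is short, but the careful bookkeeping of variable conventions: the walk-generating-function identity~(\ref{Eqn:WalkGen}) is stated in the variable $y^{-1}$, so I must make sure that the poles counted for $\phi(X\backslash\{a,b\},y)/\phi(X,y)$ are genuinely the eigenvalues $\theta_r$ and that the Jacobi identity is applied with the correct normalization. A secondary point to verify is that the equivalence ``$\det\wt{E_r}=0 \iff E_r e_a \parallel E_r e_b$'' handles the degenerate case $E_r e_a=E_r e_b=\0$ correctly, which it does, since the zero vector is parallel to every vector and so contributes no pole at all.
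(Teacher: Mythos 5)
Your proof is correct and follows essentially the same route as the paper: reduce strong fractional cospectrality to ``fractionally cospectral and parallel'' via Definition~\ref{Def:SFC} and Theorem~\ref{Thm:SFC}, then use the fact that $a$ and $b$ are parallel exactly when all poles of $\phi(X\backslash\{a,b\},y)/\phi(X,y)$ are simple. The only difference is that the paper simply cites this last fact as Lemma~8.3 of \cite{Godsil2017}, whereas you re-derive it from Jacobi's determinantal identity together with the Cauchy--Schwarz equality case for the positive semidefinite $E_r$; your derivation is sound, including the handling of the degenerate case $E_re_a=E_re_b=\0$.
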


%%%%%%%%%%%%%%%%%%%%%%%%%%%%%%%%%%%%%%%%%%%%%%%%%%%%%%%%%%%%%%%%%%%%%%%%%%%%%
%%%%%%%%%%%%%%%%%%%%%%%%%%%%%%%%%%%%%%%%%%%%%%%%%%%%%%%%%%%%%%%%%%%%%%%%%%%%%
\section{Fractional revival on two vertices}
\label{Section:FR2}

%We extend the results in \cite{ChanCoutinhoTamonVinetZhan2} and \cite{ChanCoutinhoTamonVinetZhan}, which focus on  fractional revival between strongly cospectral vertices,
%to fractional revival between strongly fractionally cospectral vertices.   

In this section, we assume $X$ is a connected  graph with integer weights.  
Suppose proper fractional revival occurs from $a$ to $b$ in $X$ at time $\tau>0$,
that is,
\begin{equation}
\label{Eqn:ProperFR1}
U(\tau) e_a = \alpha e_a+ \beta e_b,
\end{equation}
for some non-zero complex numbers $\alpha$ and $\beta$ satisfying $|\alpha|^2+|\beta|^2=1$.
%In this case, we say \textsl{$(\alpha,\beta)$-fractional revival} occurs from $a$ to $b$.

Let $P_{min}^K = \{C_1, C_2, \ldots, C_z\}$.
Since $a$ and $b$ are strongly fractionally cospectral, there exists non-zero real numbers $p$ and $q$  such that $p^2+q^2=1$ and, without loss of generality, 
\begin{equation*}
\wt{F_1} = \begin{bmatrix} p \\ q \end{bmatrix}\begin{bmatrix} p & q \end{bmatrix}
\quad \text{and} \quad
\wt{F_2} = \begin{bmatrix} -q \\ p \end{bmatrix}\begin{bmatrix} -q & p \end{bmatrix}.
\end{equation*}
We have $\wt{F_j}=\0$, for $j =3,\ldots,z$.  From the proof of Proposition~\ref{Prop:ZeroBlock}, we see that $|C_j|=1$, for $j =3,\ldots,z$.

Applying Corollary~\ref{Cor:SFC}, Equation~(\ref{Eqn:ProperFR1}) holds if and only if
\begin{equation}
\label{Eqn:ProperFR2}
e^{-\ii\tau \theta_r} =
\begin{cases}
\alpha + \frac{q}{p}\beta  & \text{if $r\in C_1$,}\\
\alpha - \frac{p}{q} \beta & \text{if $r\in C_2$}
\end{cases}
\end{equation}
which is equivalent to
\begin{equation*}
%\label{Eqn:ProperFR3}
\tau(\theta_s-\theta_r) =
\begin{cases}
0 \md{2\pi} & \text{if $r,s \in C_1$ or $r,s \in C_2$,}\\
\mu \md{2\pi} & \text{if $r\in C_1$ and $s\in C_2$,}
\end{cases}
\end{equation*}
for some real number $\mu$ satisfying
$e^{\ii \mu} = 
\left(\alpha + \frac{q}{p}\beta\right)\left(\alpha - \frac{q}{p}\beta\right)^{-1}$.
Note that 
\begin{equation*}
\left|\alpha +  \frac{q}{p}\beta  \right|=\left|\alpha -  \frac{p}{q}\beta  \right|=1
\end{equation*}
is equivalent to 
\begin{equation*}
\frac{\alpha}{\beta} + \frac{\cj{\alpha}}{\cj{\beta}} = \frac{p}{q}-\frac{q}{p}.
\end{equation*}

The results in \cite{ChanCoutinhoTamonVinetZhan2} and Section~5 of \cite{ChanCoutinhoTamonVinetZhan} are restricted to fractional revival between strongly cospectral vertices. We are now ready to extend them to the general case.

By Corollary~\ref{Cor:K-FRratio}, there exist a square-free integer $\Delta$ and reals $\rho_1$, $\rho_2$ such that
\begin{eqnarray*}
\theta_h &=& \rho_1 + \sigma_h \sqrt{\Delta}, \quad \text{for $h\in C_1$,} \quad \text{and}\\
\theta_j &=& \rho_2 + \omega_j \sqrt{\Delta}, \quad \text{for $j\in C_2$,} 
\end{eqnarray*}
where $\sigma_h$'s and $\omega_j$'s are reals satisfying
\begin{equation*}
\sigma_{h} - \sigma_{h'} = \frac{\theta_h-\theta_{h'}}{\sqrt{\Delta}} \in \Z 
\quad \text{and}\quad
\omega_{j} - \omega_{j'} = \frac{\theta_j-\theta_{j'}}{\sqrt{\Delta}} \in \Z ,
\end{equation*}
for all $h, h' \in C_1$ and $j, j' \in C_2$.

Let
\begin{equation*}
g = \gcd \left\{\frac{\theta_r-\theta_s}{\sqrt{\Delta}} : r,s \in C_1 \quad\text{or}\quad  r,s \in C_2 \right\}.
\end{equation*}
Then, for $h, h'\in C_1$ and $j, j'\in C_2$,
\begin{equation}
\label{Eqn:C1C2}
\tau(\theta_{h}-\theta_{h'}) = \tau(\theta_{j}-\theta_{j'})=0 \md{2\pi}
\end{equation} 
if and only if 
\begin{equation*}
\tau = \frac{2\pi k}{g\sqrt{\Delta}}, \quad\text{for some integer $k$.}
\end{equation*}
Hence the only times fractional revival between $a$ and $b$ can occur are some integer multiples of $ \frac{2\pi}{g\sqrt{\Delta}}$.

\begin{proposition}
\label{Prop:FRgcd}
Let $a$ and $b$ be strongly fractionally cospectral vertices in a connected graph $X$ with integer weights.   
Let  $g, \Delta$, $\rho_i$'s, $C_i$'s, $\sigma_h$'s and $\omega_j$'s be defined above.
Proper fractional revival between $a$ and $b$ occurs at time $\tau= \frac{2\pi k}{g\sqrt{\Delta}}$ if and only if there exist $h\in C_1$ and $j\in C_2$ 
such that
\begin{equation*}
\frac{k}{g\sqrt{\Delta}} \Big(\rho_2 - \rho_1 +(\omega_j-\sigma_h)\sqrt{\Delta} \Big) \not \in \Z.
\end{equation*}
\end{proposition}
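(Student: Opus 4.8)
The plan is to reduce proper fractional revival at the discrete times $\tau=\tfrac{2\pi k}{g\sqrt{\Delta}}$ to the non-degeneracy of two unit-modulus ``phase values'' attached to the classes $C_1$ and $C_2$, and then to translate that non-degeneracy into the stated $\Z$-condition. First I would record that at $\tau=\tfrac{2\pi k}{g\sqrt{\Delta}}$ the map $r\mapsto e^{-\ii\tau\theta_r}$ is constant on each of $C_1$ and $C_2$: this is exactly Equation~\eqref{Eqn:C1C2}, since $\sigma_h-\sigma_{h'}$ and $\omega_j-\omega_{j'}$ are integer multiples of $g$, forcing $\tau(\theta_h-\theta_{h'})\in2\pi\Z$ for $h,h'\in C_1$ and similarly on $C_2$. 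Write $\lambda_1=e^{-\ii\tau\theta_h}$ for $h\in C_1$ and $\lambda_2=e^{-\ii\tau\theta_j}$ for $j\in C_2$ for these common values, each of modulus one.

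Next I would evaluate $U(\tau)e_a=\sum_r e^{-\ii\tau\theta_r}E_re_a$. By Corollary~\ref{Cor:SFC}, $E_re_a=\0$ for $r\notin C_1\cup C_2$, so constancy collapses the sum to $U(\tau)e_a=\lambda_1F_1e_a+\lambda_2F_2e_a$. The block-diagonal shape of $F_1,F_2$ together with the rank-one forms of $\wt{F_1},\wt{F_2}$ gives $F_1e_a=p(pe_a+qe_b)$ and $F_2e_a=-q(-qe_a+pe_b)$, whence
\[
U(\tau)e_a=\alpha e_a+\beta e_b,\qquad \alpha=p^2\lambda_1+q^2\lambda_2,\quad \beta=pq(\lambda_1-\lambda_2),
\]
with $|\alpha|^2+|\beta|^2=1$ automatic from unitarity. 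So at these times $U(\tau)$ always exhibits revival from $a$ to $b$, and properness amounts precisely to $\alpha\neq0$ and $\beta\neq0$.

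The core step is the equivalence $\beta\neq0\iff$ the stated condition. As $p,q\neq0$, we have $\beta\neq0$ iff $\lambda_1\neq\lambda_2$, i.e.\ $e^{-\ii\tau(\theta_h-\theta_j)}\neq1$. Substituting $\theta_h-\theta_j=(\rho_1-\rho_2)+(\sigma_h-\omega_j)\sqrt{\Delta}$ and $\tau=\tfrac{2\pi k}{g\sqrt{\Delta}}$ yields
\[
\frac{\tau(\theta_h-\theta_j)}{2\pi}=-\frac{k}{g\sqrt{\Delta}}\Bigl(\rho_2-\rho_1+(\omega_j-\sigma_h)\sqrt{\Delta}\Bigr),
\]
so $\lambda_1=\lambda_2$ iff this number lies in $\Z$. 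Since $\sigma_h-\sigma_{h'},\ \omega_j-\omega_{j'}\in g\Z$, replacing the representatives $h\in C_1$ or $j\in C_2$ changes the expression only by an integer; hence its (non-)integrality is independent of the choice, so ``there exist $h,j$'' coincides with ``for all $h,j$''. This is the desired translation.

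It remains to guarantee properness, i.e.\ $\alpha\neq0$, which I expect to be the main obstacle. Because $|\lambda_1|=|\lambda_2|=1$ and $p^2,q^2>0$, the identity $p^2\lambda_1+q^2\lambda_2=0$ forces $p^2=q^2$ together with $\lambda_1=-\lambda_2$; the constraint $p^2=q^2$ is exactly the strongly cospectral case $p=\pm q$ (cf.\ Theorem~\ref{Thm:FCequiv}), which the genuinely fractional setting excludes. Thus whenever $p\neq\pm q$ one has $\alpha\neq0$ automatically, and $\beta\neq0$ alone characterizes proper fractional revival; the cospectral case, where $\alpha=0$ can coincide with $\beta\neq0$ (namely perfect state transfer), would need to be separated out and is the delicate point to handle with care.
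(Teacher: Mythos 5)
Your proof is correct and follows essentially the same route as the paper's: reduce to the constancy of $e^{-\ii\tau\theta_r}$ on each of $C_1$ and $C_2$ via Equation~(\ref{Eqn:C1C2}), express $\alpha$ and $\beta$ in terms of the two common phase values $\lambda_1,\lambda_2$, and identify $\beta\neq 0$ with the stated non-integrality condition (your observation that changing the representatives $h,j$ shifts the quantity by an integer, so ``there exist'' equals ``for all'', is implicit in the paper). Two small points in your favour: your formulas $\alpha=p^2\lambda_1+q^2\lambda_2$ and $\beta=pq(\lambda_1-\lambda_2)$ are the ones actually forced by Equation~(\ref{Eqn:ProperFR2}) (the displayed expressions in the paper's proof are consistent with that equation only when $p=\pm q$), and the $\alpha\neq 0$ issue you flag is genuine but equally unaddressed by the paper, whose proof verifies only $\beta\neq 0$; as you note, $\alpha=0$ forces $p^2=q^2$ and $\lambda_1=-\lambda_2$, i.e.\ it can only arise in the strongly cospectral case where the revival degenerates to perfect state transfer.
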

\begin{proof}
It follows from Equation~(\ref{Eqn:C1C2}) that, for all $h, h'\in C_1$ and $j, j'\in C_2$
\begin{equation*}
e^{-\ii\tau h} = e^{-\ii\tau h'}
\quad \text{and}\quad
e^{-\ii\tau j} = e^{-\ii\tau j'}.
\end{equation*}
Equation~(\ref{Eqn:ProperFR2}) holds if and only if
\begin{equation*}
\alpha = \frac{e^{-\ii \tau \theta_h}+e^{-\ii \tau \theta_j}}{2}
\quad \text{and}\quad
\beta = \frac{q}{p}\frac{(e^{-\ii \tau \theta_h}-e^{-\ii \tau \theta_j})}{2},
\end{equation*}
for some $h\in C_1$ and $j\in C_2$.
Then
$\beta \neq 0$ if and only if $e^{\ii\tau(\theta_j-\theta_h)} \neq 1$.
The latter holds if and only if 
\begin{equation*}
\frac{k}{g\sqrt{\Delta}} \Big(\rho_2 - \rho_1 +(\omega_j-\sigma_h)\sqrt{\Delta} \Big) \not \in \Z.
\end{equation*}
In this case, we have $U(\tau)e_a = \alpha e_a +\beta e_b$ with $\beta \neq 0$.
\end{proof}

\begin{theorem}
\label{Thm:FR}
Let $X$ be a connected graph with integer weights and $a,b \in V(X)$.  Proper fractional revival between $a$ and $b$ occurs in $X$ if and only if
the following conditions hold.
\begin{enumerate}[i.]
\item
\label{Cond:FR1}
$a$ and $b$ are strongly fractionally cospectral vertices with $C_1$ and $C_2$ defined as above.
\item
\label{Cond:FR2}
There exist a square-free integer $\Delta$ and real numbers $\rho_1$, $\rho_2$ such that
\begin{eqnarray*}
\theta_h &=& \rho_1 + \sigma_h \sqrt{\Delta}, \quad \text{for $h\in C_1$,} \quad \text{and}\\
\theta_j &=& \rho_2 + \omega_j \sqrt{\Delta}, \quad \text{for $j\in C_2$,} 
\end{eqnarray*}
where $\sigma_h$'s and $\omega_j$'s are real numbers satisfying
\begin{equation*}
\sigma_{h} - \sigma_{h'}  \in \Z 
\quad \text{and}\quad
\omega_{j} - \omega_{j'}  \in \Z ,
\end{equation*}
for all $h, h' \in C_1$ and $j, j' \in C_2$.
\item
\label{Cond:FR3}
Let $g = \gcd \{\frac{\theta_r-\theta_s}{\sqrt{\Delta}} : r,s \in C_1 \quad\text{or}\quad  r,s \in C_2 \}$.
There exist $h\in C_1$ and $j\in C_2$ such that
\begin{equation*}
\frac{k}{g\sqrt{\Delta}} \Big(\rho_2 - \rho_1 +(\omega_j-\sigma_h)\sqrt{\Delta} \Big) \not \in \Z.
\end{equation*}
\end{enumerate}
Moreover, if these conditions hold,  $\frac{2\pi}{g\sqrt{\Delta}}$ is the minimum time proper fractional revival between $a$ and $b$ occurs in $X$.
\end{theorem}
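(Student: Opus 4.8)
The plan is to assemble the machinery already built in this section, since each of the three conditions has essentially been isolated in a preceding result; the only genuinely new content is the minimality claim, which rests on a small divisibility observation about the index $k$.

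For the forward direction, suppose proper fractional revival occurs from $a$ to $b$ at some time $\tau>0$. First I would invoke Theorem~\ref{Thm:SFC}: the block structure of $U(\tau)$ lies in $\alg{A}$, so $X$ is properly decomposable with respect to $\{a,b\}$, hence $a$ and $b$ are strongly fractionally cospectral, which yields condition~(\ref{Cond:FR1}) together with the classes $C_1,C_2$. Condition~(\ref{Cond:FR2}) is then exactly Corollary~\ref{Cor:K-FRratio} applied to $K=\{a,b\}$: integrality of the weights produces the square-free $\Delta$ and the within-class representations $\theta_h=\rho_1+\sigma_h\sqrt{\Delta}$, $\theta_j=\rho_2+\omega_j\sqrt{\Delta}$ with integer differences inside each class. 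Since, by Corollary~\ref{Cor:SFC}, the exponential $e^{-\ii\tau\theta_r}$ must be constant on $C_1$ and on $C_2$, the computation around Equation~(\ref{Eqn:C1C2}) forces $\tau=\tfrac{2\pi k}{g\sqrt{\Delta}}$ for some positive integer $k$. Properness is the statement $\beta\neq 0$, which is precisely the conclusion of Proposition~\ref{Prop:FRgcd}, so condition~(\ref{Cond:FR3}) holds for this $k$.

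For the converse I would set $\tau=\tfrac{2\pi}{g\sqrt{\Delta}}$, the case $k=1$. Conditions~(\ref{Cond:FR1}) and~(\ref{Cond:FR2}) guarantee, through Equation~(\ref{Eqn:C1C2}), that $e^{-\ii\tau\theta_r}$ takes a single value on $C_1$ and a single value on $C_2$, while each remaining singleton class is trivially constant; hence by Corollary~\ref{Cor:SFC} the matrix $U(\tau)$ has the required block-diagonal form and $\{a,b\}$-fractional revival occurs. Feeding condition~(\ref{Cond:FR3}) with $k=1$ into Proposition~\ref{Prop:FRgcd} then gives $\beta\neq 0$, so the revival is proper.

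For the minimality statement the decisive point is that the candidate times at which any $\{a,b\}$-fractional revival can occur are exactly the positive multiples of $\tfrac{2\pi}{g\sqrt{\Delta}}$, by Equation~(\ref{Eqn:C1C2}); thus no revival whatsoever happens in the open interval $\bigl(0,\tfrac{2\pi}{g\sqrt{\Delta}}\bigr)$, and since proper revival does occur at $\tfrac{2\pi}{g\sqrt{\Delta}}$ by the converse, this value is the minimum. The one subtlety I would flag as the main obstacle is checking that condition~(\ref{Cond:FR3}) is insensitive to the choice of $k$: if $\tfrac{k}{g\sqrt{\Delta}}(\theta_j-\theta_h)\notin\Z$ for some $h,j$ and some $k$, then already $\tfrac{1}{g\sqrt{\Delta}}(\theta_j-\theta_h)\notin\Z$ for some $h,j$. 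This is the contrapositive of a triviality: if $\tfrac{1}{g\sqrt{\Delta}}(\theta_j-\theta_h)\in\Z$ for every cross-class pair, then multiplying by the integer $k$ keeps it integral for every pair, contradicting the hypothesis. This equivalence is exactly what reconciles the existence statement (where $k$ is allowed to range) with its realization at $k=1$, and it is the only step in which the normalization by $g=\gcd\{(\theta_r-\theta_s)/\sqrt{\Delta}\}$ does any real work.
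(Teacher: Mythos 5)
Your proposal is correct and matches the paper's own (largely implicit) argument: the paper presents Theorem~\ref{Thm:FR} as the direct assembly of Theorem~\ref{Thm:SFC}, Corollary~\ref{Cor:K-FRratio}, Equation~(\ref{Eqn:C1C2}) and Proposition~\ref{Prop:FRgcd}, exactly as you do. Your closing observation that condition~(\ref{Cond:FR3}) holds for some $k$ if and only if it holds for $k=1$ is a welcome clarification of the free index $k$ in the statement and is precisely what justifies the minimality claim.
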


%%%%%%%%%%%%%%%%%%%%%%%%%%%%%%%%%%%%%%%%%%%%%%%%%%%%%%%%%%%%%%%%%%%%%%%%%%%%%
%%%%%%%%%%%%%%%%%%%%%%%%%%%%%%%%%%%%%%%%%%%%%%%%%%%%%%%%%%%%%%%%%%%%%%%%%%%%%
\section{Polygamy}
\label{Section:Polygamy}

Perfect state transfer has the monogamous property in that a vertex cannot have perfect state transfer to two distinct vertices.
In \cite{ChanCoutinhoTamonVinetZhan}, the question of whether fractional revival exhibits the same monogamous property is raised.
We answer this question in the negative by constructing weighted graphs that have  fractional revival between every pair of vertices.

We start with a normalized Hadamard matrix $H$ of order $n$, for some $n\geq 4$.  
We first construct the Laplacian matrix of a connected weighted graph that has fractional revival from $v_0$ to $v_k$, for $k=1,\ldots, n-1$.
Let $p_1, \ldots, p_{n-1}$ be distinct odd primes.
For $r=1,\ldots, n-1$, we use Chinese remainder theorem to find the unique solution, $\lambda_r$,  in $[0, \Pi_{j=1}^{n-1} p_j]$ for the following system of equations.
\begin{equation}
\label{Eqn:CRT}
\begin{cases}
x=& (1-H_{1,r})/2 \md{p_1}\\
x=& (1-H_{2,r})/2 \md{p_2}\\
\vdots & \vdots\\
x=& (1-H_{n-1,r})/2\md{p_{n-1}}\\
\end{cases}
\end{equation}
Let $\lambda_0=0$.
For $r=0,1,\ldots, n-1$, let $S_r =\{j : H_{j,r}=1\}$.
Then 
\begin{equation*}
\lambda_r = 0 \md{\ \Pi_{j\in S_r} p_j}
\quad \text{and}\quad
\lambda_r  = 1 \md{\ \Pi_{h \not \in S_r} p_h}.
\end{equation*}
Since the columns of $H$ are distinct, so are the sets $S_0, \ldots, S_{n-1}$.   We conclude that $\lambda_0, \lambda_1, \ldots, \lambda_{n-1}$ are distinct integers.

Define $D$ to be the $n\times n$ diagonal matrix where $D_{r,r}=\lambda_r$, for $r=0,1,\ldots, n-1$, and
\begin{equation*}
L = \frac{1}{n} HD H^T.
\end{equation*}
Then $L$ is the Laplacian matrix of a Hadamard diagonalizable graph with $n$ distinct eigenvalues.
Hence the principal idempotents of $L$ are
\begin{equation*}
E_r= \frac{1}{n}(He_r) (H e_r)^T, \quad \text{for $r=0,\ldots, n$},
\end{equation*}
and 
\begin{equation*}
U(t) =e^{-\ii t L}= \sum_{r=0}^{n-1} e^{-it\lambda_r} E_r.
\end{equation*}
As $H$ is normalized, $E_r e_0 = \frac{1}{n} H e_r$, and
\begin{equation*}
U(t) e_0 = \frac{1}{n} H \left(\sum_{r=0}^{n-1} e^{-it\lambda_r} e_r\right).
\end{equation*}
It follows from Equations~(\ref{Eqn:CRT}) that
\begin{equation*}
e^{-\frac{2\pi\lambda_r}{p_k} \ii} =
\begin{cases}
1 & \text{if $H_{k,r}=1$,}\\
e^{-\frac{2\pi}{p_k}\ii} & \text{if $H_{k,r}=-1$.}
\end{cases}
\end{equation*}
Thus
\begin{eqnarray*}
U(\frac{2\pi}{p_k})e_0
&=& \frac{1}{n} H \left((1) \frac{1}{2} (H^Te_0+H^Te_k) + (e^{-\frac{2\pi}{p_k}\ii} ) \frac{1}{2} (H^Te_0-H^Te_k)\right)\\
&=& \frac{1}{2}\left(1+e^{-\frac{2\pi}{p_k}\ii}\right) e_0 + \frac{1}{2}\left(1-e^{-\frac{2\pi}{p_k}\ii}\right) e_k
\end{eqnarray*}
and fractional revival occurs from vertex $v_0$ to vertex $v_k$ at time $\frac{2\pi}{p_k}$, for $k=1,\ldots,p_{n-1}$.

When $H$ is the character table of $\Z_2^m$, that is,
\begin{equation*}
H =\begin{bmatrix} 1&1\\1&-1 \end{bmatrix}^{\otimes m},
\end{equation*}
$L$ is the Laplacian matrix of a cubelike graph with vertex set $\Z_2^m$. 
As a cubelike graph is vertex transitive, fractional revival occurs between any pair of vertices having the same difference as $v_0-v_k$ at time
$\frac{2\pi}{p_k}$.

It follows from Theorem~5 of \cite{BarikFallatKirkland} that $X$ is regular if its Laplacian matrix is diagonalizable by a Hadamard matrix.
We conclude that the adjacency matrix of the cubelike graph constructed above also admits fractional revival between every pair of vertices.

The outstanding question here is whether there exists a simple graph in which fractional revival is polygamous.

\begin{example}
\label{Ex:H4}
Let
\begin{equation*}
H=\begin{bmatrix} 1&1\\1&-1\end{bmatrix}^{\otimes 2}=\begin{bmatrix} 1&1&1&1\\1&-1&1&-1\\1&1&-1&-1\\1&-1&-1&1\end{bmatrix}.
\end{equation*}
The rows and columns of $H$ are indexed by vertices
$v_0=(0,0)$, $v_1=(1,0)$, $v_2=(0,1)$ and $v_3=(1,1)$, which are elements of $\Z_2^2$.

Let $p_1=3$, $p_2=5$, and $p_3=7$.  Solving Equations~(\ref{Eqn:CRT}) gives $\lambda_1=85$, $\lambda_2=36$, $\lambda_3=91$.
Let
\begin{equation*}
L = \frac{1}{4}H \begin{bmatrix}0&0&0&0\\0&85&0&0\\0&0&36&0\\0&0&0&91\end{bmatrix}H^T
= \begin{bmatrix} 53 & -35&-10.5 & -7.5\\-35&53&-7.5&-10.5\\ -10.5 &-7.5&53&-35\\-7.5&-10.5&-35&53\end{bmatrix}.
\end{equation*}
Then
\begin{equation*}
U(\frac{2\pi}{3}) = 
\frac{1}{2} 
\begin{bmatrix}
1+e^{-\frac{2\pi}{3}\ii}& 1-e^{-\frac{2\pi}{3}\ii}&0&0\\
1-e^{-\frac{2\pi}{3}\ii}& 1+e^{-\frac{2\pi}{3}\ii} & 0&0\\
0&0&1+e^{-\frac{2\pi}{3}\ii}&1-e^{-\frac{2\pi}{3}\ii}\\
0&0&1-e^{-\frac{2\pi}{3}\ii}&1+e^{-\frac{2\pi}{3}\ii}
\end{bmatrix},
\end{equation*}
\begin{equation*}
U(\frac{2\pi}{5}) = 
\frac{1}{2} 
\begin{bmatrix}
1+e^{-\frac{2\pi}{5}\ii} & 0&1-e^{-\frac{2\pi}{5}\ii}&0\\
0& 1+e^{-\frac{2\pi}{5}\ii} & 0&1-e^{-\frac{2\pi}{5}\ii}\\
1-e^{-\frac{2\pi}{5}\ii}&0&1+e^{-\frac{2\pi}{5}\ii}&0\\
0&1-e^{-\frac{2\pi}{5}\ii}&0&1+e^{-\frac{2\pi}{5}\ii}
\end{bmatrix},
\end{equation*}
and
\begin{equation*}
U(\frac{2\pi}{7}) = 
\frac{1}{2} 
\begin{bmatrix}
1+e^{-\frac{2\pi}{7}\ii} & 0&0&1-e^{-\frac{2\pi}{7}\ii}\\
0& 1+e^{-\frac{2\pi}{7}\ii} & 1-e^{-\frac{2\pi}{7}\ii}&0\\
0&1-e^{-\frac{2\pi}{7}\ii}&1+e^{-\frac{2\pi}{7}\ii}&0\\
1-e^{-\frac{2\pi}{7}\ii}&0&0&1+e^{-\frac{2\pi}{7}\ii}
\end{bmatrix}.
\end{equation*}
We see that fractional revival occurs between every pair of vertices.
\end{example}

%%%%%%%%%%%%%%%%%%%%%%%%%%%%%%%%%%%%%%%%%%%%%%%%%%%%%%%%%%%%%%%%%%%%%%%%%%%%%%
%%%%%%%%%%%%%%%%%%%%%%%%%%%%%%%%%%%%%%%%%%%%%%%%%%%%%%%%%%%%%%%%%%%%%%%%%%%%%%
\section{Prescribed fractional revival}

We are now ready to construct a weighted graph $X$ on $n\geq 4$ vertices that has (proper) fractional revival between $a$ and $b$ at a given time $\tau=2\pi\xi$
and 
\begin{equation*}
\wt{U(\tau)} = H,
\end{equation*}
for any given $2\times 2$ non-diagonal symmetry unitary matrix $H$.

%From Theorem~\ref{Thm:FR}, we see the minimum time fractional revival %between $a$ and $b$ occurs in an integer weighted graph must have the %form $\tau=\frac{2\pi}{g\sqrt{\Delta}}$, for some positive integer $g$ %and some positive square-free integer $\Delta$.   Since we do not know %the type of weights a priori, we assume the fractional revival time has %the form $\frac{2\pi}{g\sqrt{\Delta}}$.

Suppose $H$ has the spectral decomposition
\begin{equation*}
H = \lambda_1 \begin{bmatrix} p\\q\end{bmatrix}\begin{bmatrix} p& q\end{bmatrix} + \lambda_2 \begin{bmatrix}-q\\p\end{bmatrix}\begin{bmatrix} -q& p\end{bmatrix},
\end{equation*}
for some non-zero real numbers $p$ and $q$ satisfying $p^2+q^2=1$, and for some distinct $\lambda_1, \lambda_2\in \C$.   
%Note that $H$ is non-diagonal.

We first choose the eigenvalues of $A$.   Let $\theta_1$ and $\theta_3$ be real numbers satisfying 
\begin{equation*}
e^{-\ii \tau \theta_1} = \lambda_1
\quad \text{and}\quad
e^{-\ii \tau \theta_3} = \lambda_2.
\end{equation*}
%Since $\lambda_1\neq \lambda_2$. $\xi(\theta_1-\theta_3) \not \in \Z$.
Then we pick non-zero integers $\sigma$ and $\omega$ to define
\begin{equation}
\label{Eqn:Prescribed}
\theta_2= \theta_1+ \frac{\sigma}{\xi}
\quad \text{and}\quad
\theta_4= \theta_3+ \frac{\omega}{\xi}
\end{equation}
so that $\theta_1,\theta_2, \theta_3$, and $\theta_4$ are all distinct.
Finally, pick any $\theta_5, \ldots, \theta_n\in \R$ that are distinct from $\theta_1, \ldots, \theta_4$.

We now choose an orthonormal basis of real eigenvectors for $A$.  Let $u_1,\ldots, u_{n-2}$ be an orthonormal basis of real vectors in $\C^{n-2}$.
Define the $n\times n$ orthogonal matrix
\begin{equation*}
P=
\begin{bmatrix}
\frac{p}{\sqrt{2}} & \frac{-p}{\sqrt{2}} & \frac{-q}{\sqrt{2}} & \frac{q}{\sqrt{2}} & 0 &0& \ldots & 0\\
\frac{q}{\sqrt{2}} & \frac{-q}{\sqrt{2}} & \frac{p}{\sqrt{2}} & \frac{-p}{\sqrt{2}} & 0 & 0&\ldots & 0\\
\frac{1}{\sqrt{2}} u_1 & \frac{1}{\sqrt{2}} u_1 &\frac{1}{\sqrt{2}} u_2 &\frac{1}{\sqrt{2}} u_2 & u_3 & u_4 & \ldots & u_{n-2}
\end{bmatrix}.
\end{equation*}
We define the adjacency matrix of $A$ to be
\begin{equation*}
A = P D P^T,
\end{equation*}
where $D$ is  the diagonal matrix with $D_{j,j}=\theta_j$, for $j=1,\ldots,n$. 

For $j=1,\ldots, 4$, $\theta_j$ is distinct from all other eigenvalues of $A$, so
\begin{equation*}
E_j =  Pe_j (Pe_j)^T
\end{equation*}
is a principal idempotent of $A$.   It is straightforward to show that in $P_{min}^{\{a,b\}}$
\begin{equation*}
C_1 = \{1,2\}
\quad \text{and} \quad
C_2 = \{3,4\}.
\end{equation*}
We see that Condition~(\ref{Cond:SFC2}) in Corollary~\ref{Cor:SFC} holds, and conclude that $a$ and $b$ are strongly fractionally cospectral in $X$.
The ratio conditions with respect to $P_{min}^K$ follows from
Equation~(\ref{Eqn:Prescribed}).
By Theorem~\ref{Thm:Char2}, $X$ admits proper fractional revival from $a$ to $b$ in $X$.  Moreover 
\begin{equation*}
\wt{U(\tau)} = e^{-\ii\tau \theta_1} \begin{bmatrix} p^2 & pq\\pq&q^2\end{bmatrix} +e^{-\ii\tau \theta_3} \begin{bmatrix} q^2 & -pq\\-pq&p^2\end{bmatrix}  = H
\end{equation*}
and
\begin{equation*}
U(\tau) = \begin{bmatrix} H &\0 \\ \0 & H'\end{bmatrix},
\end{equation*}
for some $(n-2)\times (n-2)$ matrix $H'$.

\begin{example}
\label{Ex:Prescribed}
We pick $\theta_1, \ldots, \theta_4$ as defined above such that none of them is zero.
Let $\theta_5=\ldots =\theta_n=0$.   
For some $1<m<n-3$,
let
\begin{equation*}
u_1 = \begin{bmatrix}\frac{1}{\sqrt{m}} \1_m \\ \0_{n-2-m} \end{bmatrix}
\quad \text{and} \quad
u_2=\begin{bmatrix}  \0_m \\ \frac{1}{\sqrt{n-2-m}} \1_{n-2-m} \end{bmatrix}
\end{equation*}
and extend from $\{u_1, u_2\}$ to an orthonormal basis $\{u_1,\ldots,u_{n-2}\}$ in  $\C^{n-2}$.
Following the above construction, the graph $X$ with adjacency matrix
\begin{equation*}
A =
\begin{bmatrix}
\frac{(\theta_1+\theta_2)p^2+(\theta_3+\theta_4)q^2}{2} & \frac{(\theta_1+\theta_2-\theta_3-\theta_4)pq}{2} 
 &\frac{p(\theta_1-\theta_2)}{2\sqrt{m}}\1_{m}^T&\frac{-q(\theta_3-\theta_4)}{2\sqrt{n-2-m}}\1_{n-2-m}^T \\
\frac{(\theta_1+\theta_2-\theta_3-\theta_4)pq}{2} &\frac{(\theta_1+\theta_2)q^2+(\theta_3+\theta_4)p^2}{2} 
&\frac{q(\theta_1-\theta_2)}{2\sqrt{m}}\1_{m}^T&\frac{p(\theta_3-\theta_4)}{2\sqrt{n-2-m}}\1_{n-2-m}^T \\
\frac{p(\theta_1-\theta_2)}{2\sqrt{m}}\1_{m}&\frac{q(\theta_1-\theta_2)}{2\sqrt{m}}\1_{m} & \frac{\theta_1+\theta_2}{2m} J_m & \0\\
\frac{-q(\theta_3-\theta_4)}{2\sqrt{n-2-m}}\1_{n-2-m}&\frac{p(\theta_3-\theta_4)}{2\sqrt{n-2-m}}\1_{n-2-m} & \0 & \frac{\theta_3+\theta_4}{2(n-2-m)} J_{n-2-m} \\
\end{bmatrix}
\end{equation*}
has fractional revival between $a$ and $b$ at time $\tau$ with $\wt{U(\tau)}=H$.

\begin{center}
\begin{tikzpicture}
\fill (0,0) circle (1.5pt);
\draw (0,0) to [out=120, in = 180] (0,0.75) to [out=0,in=60] (0,0);
\draw (0,0) node[anchor=east]{\small $a$};
\draw (0,0)--(4,0);
\fill (4,0) circle (1.5pt);
\draw (4,0) to [out=120, in = 180] (4,0.75) to [out=0,in=60] (4,0);
\draw (4,0) node[anchor=west]{\small $b$};

\fill (-0.75,-1.5) circle (1.5pt);
\draw (-0.75,-1.5) to [out=240, in = 180] (-0.75,-2.25) to [out=0,in=300] (-0.75,-1.5);
\draw (0,-1.75) node[anchor=south]{$\cdots \cdots$};
\fill (0.75,-1.5) circle (1.5pt);
\draw (0.75,-1.5) to [out=240, in = 180] (0.75,-2.25) to [out=0,in=300] (0.75,-1.5);
\draw[dashed] (0,-1.5) ellipse (1.5 and 0.5);
\draw (-2,-1.75) node[anchor=south]{\small $K_m$};
\draw (0,0)--(-0.75,-1.5);
\draw (0,0)--(0.75,-1.5);
\draw [gray, dotted] (0,0)--(-0.25,-1.5);
\draw [gray, dotted] (0,0)--(0.25,-1.5);
\draw (0,0)--(3,-1.5);
\draw (0,0)--(5,-1.5);
\draw [gray, dotted] (0,0)--(3.5,-1.5);
\draw [gray, dotted] (0,0)--(4,-1.5);
\draw [gray, dotted] (0,0)--(4.5,-1.5);

\fill (3,-1.5) circle (1.5pt);
\draw (3,-1.5) to [out=240, in = 180] (3,-2.25) to [out=0,in=300] (3,-1.5);
\draw (4,-1.75) node[anchor=south]{$\cdots \cdots$};
\fill (5,-1.5) circle (1.5pt);
\draw (5,-1.5) to [out=240, in = 180] (5,-2.25) to [out=0,in=300] (5,-1.5);
\draw[dashed] (4,-1.5) ellipse (2 and 0.5);
\draw (6.75,-1.75) node[anchor=south]{\small $K_{n-2-m}$};
\draw (4,0)--(3,-1.5);
\draw (4,0)--(5,-1.5);
\draw [gray, dotted] (4,0)--(3.5,-1.5);
\draw [gray, dotted] (4,0)--(4,-1.5);
\draw [gray, dotted] (4,0)--(4.5,-1.5);
\draw (4,0)--(-0.75,-1.5);
\draw (4,0)--(0.75,-1.5);
\draw [gray, dotted] (4,0)--(-0.25,-1.5);
\draw [gray, dotted] (4,0)--(0.25,-1.5);

\draw (2,-2.5) node[anchor=north]{\small The underlying graph of $X$ in Example~\ref{Ex:Prescribed}};
\end{tikzpicture}
\end{center}
\end{example}

This method can be extended to $K$-fractional revival for $|K| \geq 2$.
Given a time $\tau$ and a $|K| \times |K|$ non-diagonal symmetric unitary matrix $H$,
we can always construct a weighted graph $X$ to have proper $K$-fractional revival at $\tau$ with the prescribed matrix $H$.  It is only when we wish to impose additional structure on $A$, such as having integer weights or being sparse, that difficulty arises.
%%%%%%%%%%%%%%%%%%%%%%%%%%%%%%%%%%%%%%%%%%%%%%%%%%%%%%%%%%%%%%%%%%%%%%%%%%%%%%

\paragraph{Acknowledgements} G.L. was supported by NSF/DMS-1800738 and the Simons Foundation Collaboration Grant. O.E was supported by the Herchel Smith Harvard Undergraduate Research Program. 

We all acknowledge the support of Chris Godsil's NSERC Accelerator Grant that funded the workshop Algebraic Graph Theory and Quantum Walks, where we all started to work on this project.

%%%%%%%%%%%%%%%%%%%%%%%%%%%%%%%%%%%%%%%%%%%%%%%%%%%%%%%%%%%%%%%%%%%%%%%%%%%%%%

%\bibliographystyle{plain}
%\bibliography{qwalks}

\end{document}